\documentclass{amsart}
%	options include 12pt or 11pt or 10pt
%	classes include article, report, book, letter, thesis
\usepackage{setspace}
\usepackage{a4}
\usepackage{amssymb,amsmath,amsthm,latexsym}
\usepackage{amsfonts}
\usepackage{amsfonts}
\usepackage{graphicx}
\usepackage{textcomp}
\usepackage{cite}
\usepackage{enumerate}
\usepackage[mathscr]{euscript}
\usepackage{mathtools}
\newtheorem{theorem}{Theorem}[section]

\newtheorem{conjecture}[theorem]{Conjecture}
\newtheorem{corollary}[theorem] {Corollary}
\newtheorem{definition}[theorem]{Definition}

\newtheorem{proposition}[theorem]{Proposition}

\newtheorem{question}[theorem]{Question}
\setlength{\parindent}{0pt} \setlength{\evensidemargin}{0.3cm}
\setlength{\oddsidemargin}{0.3cm} \setlength{\topmargin}{-2cm}
\textwidth 16cm \textheight 23cm
\onehalfspacing
\title{This is the title}
\usepackage{amssymb}
\usepackage{amssymb}
\usepackage{amssymb}
\usepackage{amssymb}
\usepackage{amsmath}
\usepackage{tikz}
\usepackage{hyperref}
\usepackage{enumerate}
\usepackage{mathtools}
\usepackage{amsmath}
\usepackage{tikz}
\usepackage{amssymb}
\usepackage{amsmath}
\usepackage{tikz}
\usepackage{hyperref}
\raggedbottom

%    Blank box placeholder for figures (to avoid requiring any
%    particular graphics capabilities for printing this document).

\usepackage{fancyhdr}
%\pagestyle{fancy}
%\lhead{C*-ALGEBRAIC SCHUR PRODUCT}
%\rhead{K. MAHESH KRISHNA}
\pagestyle{fancy}
\fancyhead[LO]{\textbf{DISCRETE AND CONTINUOUS WELCH BOUNDS FOR BANACH SPACES WITH APPLICATIONS}}
\fancyhead[RE]{\textbf{K. MAHESH KRISHNA}}

\begin{document}
\begin{center}
{\bf{DISCRETE AND CONTINUOUS WELCH BOUNDS  FOR  BANACH SPACES WITH APPLICATIONS}}\\
\textbf{K. MAHESH KRISHNA}\\
Post Doctoral Fellow \\
Statistics and Mathematics Unit\\
Indian Statistical Institute, Bangalore Centre\\
Karnataka 560 059, India\\
Email: kmaheshak@gmail.com\\

Date: \today
\end{center}
%Discrete and continuous Welch bounds for Banach Spaces with Applications
\hrule
\vspace{0.5cm}
%--------------------------------------
\textbf{Abstract}: 	Let $\{\tau_j\}_{j=1}^n$ be a collection in a finite dimensional Banach space $\mathcal{X}$ of dimension $d$  and 	$\{f_j\}_{j=1}^n$ be a collection in  $\mathcal{X}^*$ (dual of $\mathcal{X}$) such that 	$f_j(\tau_j) =1$, $\forall 1\leq j\leq n$. 	Let $n\geq d$ and   $\text{Sym}^m(\mathcal{X})$ be  the Banach  space of symmetric m-tensors.   If the  operator $	\text{Sym}^m(\mathcal{X})\ni x \mapsto  \sum_{j=1}^nf_j^{\otimes m}(x)\tau_j ^{\otimes m}\in\text{Sym}^m(\mathcal{X})$	 is diagonalizable and its eigenvalues are all non negative, then  we prove that 
\begin{align}\label{WELCHBANACHABSTRACT}
\max _{1\leq j,k \leq n, j\neq k}|f_j(\tau_k)|^{2m}\geq \max _{1\leq j,k \leq n, j\neq k}|f_j(\tau_k)f_k(\tau_j)|^m \geq\frac{1}{n-1}\left[\frac{n}{{d+m-1\choose m}}-1\right], \quad \forall m \in \mathbb{N}.
\end{align}
When $	\mathcal{X}=\mathcal{H}$  is a Hilbert space, and $f_j$  is defined by $f_j: \mathcal{H}\ni h \mapsto \langle h, \tau_j \rangle \in \mathbb{K}$ (where $\mathbb{K}$ is $\mathbb{R}$ or $\mathbb{C}$), $\forall 1 \leq j \leq n$, then Inequality (\ref{WELCHBANACHABSTRACT}) reduces to Welch bounds. Thus Inequality (\ref{WELCHBANACHABSTRACT})   improves 48 years old result  obtained by Welch [\textit{IEEE Transactions on  Information Theory, 1974}]. We also prove the following continuous version of Inequality (\ref{WELCHBANACHABSTRACT}). Let $m \in \mathbb{N}$, $(\Omega, \mu)$ be a $\sigma$-finite measure space and $(\{f_\alpha\}_{\alpha\in \Omega}, \{\tau_\alpha\}_{\alpha\in \Omega})$ be a  continuous Bessel  family for a finite dimensional Banach   space $\mathcal{X}$ of dimension $d$ such that $f_\alpha(\tau_\alpha)=1$, $\forall \alpha \in \Omega$.  If the diagonal $\Delta\coloneqq \{(\alpha, \alpha):\alpha \in \Omega\}$ is measurable in the measure space $\Omega\times \Omega$,
\begin{align*}
	\int_{\Omega\times\Omega}|f_\alpha(\tau_\beta)f_\beta(\tau_\alpha)|^m\, d(\mu\times\mu)(\alpha,\beta)<\infty,
\end{align*} 
and the operator $	\text{Sym}^m(\mathcal{X})\ni x \mapsto  \int_{\Omega}f_\alpha^{\otimes m}(x)\tau_\alpha ^{\otimes m}\, d\mu (\alpha)\in\text{Sym}^m(\mathcal{X})$	 is diagonalizable and its eigenvalues are all non negative, then  we prove that  
\begin{align}\label{CONTINUOUSWELCHBANACHABSTRACT}
	\sup _{\alpha, \beta \in \Omega, \alpha\neq \beta}|f_\alpha(\tau_\beta) |^{2m}\geq 	\sup _{\alpha, \beta \in \Omega, \alpha\neq \beta}|f_\alpha(\tau_\beta)f_\beta(\tau_\alpha) |^{m}\geq \frac{1}{(\mu\times\mu)((\Omega\times\Omega)\setminus\Delta)}\left[\frac{	\mu(\Omega)^2}{{d+m-1 \choose m}}-(\mu\times\mu)(\Delta)\right].
\end{align}
Inequality (\ref{CONTINUOUSWELCHBANACHABSTRACT}) improves the continuous Welch bounds for Hilbert spaces obtained by the author in the year 2021. Motivated from several high end  Hilbert space results and problems, we formulate several open problems including Zauner's conjecture for Banach spaces.

\textbf{Keywords}:  Banach space, Welch bound, Approximate Schauder frame, Zauner's conjecture. 

\textbf{Mathematics Subject Classification (2020)}: 42C15.\\

\hrule

\tableofcontents

\section{Introduction}
Given a collection $\{\tau_j\}_{j=1}^n$ of  unit vectors in $\mathbb{C}^d$, using Cauchy-Schwarz inequality we get 
\begin{align}\label{WELCHBEGINNING}
\max _{1\leq j,k \leq n, j\neq k}|\langle \tau_j, \tau_k\rangle |^{2}\leq 1.
\end{align}
The natural question which comes immediately is that whether there is any lower bound for the quantity max in Inequality (\ref{WELCHBEGINNING}). In his celebrated paper \cite{WELCH}, L. Welch proved the following result in 1974.
  \begin{theorem}\cite{WELCH}\label{WELCHTHEOREM} (\textbf{Welch bounds})
	Let $n\geq d$.	If	$\{\tau_j\}_{j=1}^n$  is any collection of  unit vectors in $\mathbb{C}^d$, then
	\begin{align*}
		\sum_{1\leq j,k \leq n}|\langle \tau_j, \tau_k\rangle |^{2m}=\sum_{j=1}^n\sum_{k=1}^n|\langle \tau_j, \tau_k\rangle |^{2m}\geq \frac{n^2}{{d+m-1\choose m}}, \quad \forall m \in \mathbb{N}.
	\end{align*}
	In particular,
	\begin{align*}
	\sum_{1\leq j,k \leq n}|\langle \tau_j, \tau_k\rangle |^{2}=		\sum_{j=1}^n\sum_{k=1}^n|\langle \tau_j, \tau_k\rangle |^{2}\geq \frac{n^2}{{d}}.
	\end{align*}
	Further, 
	\begin{align*}
		\text{(\textbf{Higher order Welch bounds})}	\quad		\max _{1\leq j,k \leq n, j\neq k}|\langle \tau_j, \tau_k\rangle |^{2m}\geq \frac{1}{n-1}\left[\frac{n}{{d+m-1\choose m}}-1\right], \quad \forall m \in \mathbb{N}.
	\end{align*}
	In particular,
	\begin{align*}
		\text{(\textbf{First order Welch bound})}\quad 	\max _{1\leq j,k \leq n, j\neq k}|\langle \tau_j, \tau_k\rangle |^{2}\geq\frac{n-d}{d(n-1)}.
	\end{align*}
\end{theorem}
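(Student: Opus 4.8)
The plan is to reduce the statement to a linear-algebra fact about the symmetric $m$-th tensor powers of the $\tau_j$'s and then apply the trace--rank form of the Cauchy--Schwarz inequality. First I would pass to the symmetric tensor power $\text{Sym}^m(\mathbb{C}^d)$, which is itself a finite dimensional Hilbert space of dimension $\binom{d+m-1}{m}$, and record the elementary identity $\langle \tau_j^{\otimes m}, \tau_k^{\otimes m}\rangle = \langle \tau_j, \tau_k\rangle^m$; in particular each $\tau_j^{\otimes m}$ is again a unit vector.

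Next I would form the positive self-adjoint operator $S \coloneqq \sum_{j=1}^n \langle \cdot\,, \tau_j^{\otimes m}\rangle\, \tau_j^{\otimes m}$ acting on $\text{Sym}^m(\mathbb{C}^d)$. Its range lies in $\operatorname{span}\{\tau_j^{\otimes m}\}_{j=1}^n \subseteq \text{Sym}^m(\mathbb{C}^d)$, so $\operatorname{rank}(S) \le \binom{d+m-1}{m}$. A direct computation gives $\operatorname{tr}(S) = \sum_{j=1}^n \|\tau_j^{\otimes m}\|^2 = n$ and $\operatorname{tr}(S^2) = \sum_{j,k=1}^n |\langle \tau_j^{\otimes m}, \tau_k^{\otimes m}\rangle|^2 = \sum_{j,k=1}^n |\langle \tau_j, \tau_k\rangle|^{2m}$. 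Applying the inequality $(\operatorname{tr} S)^2 \le \operatorname{rank}(S)\cdot \operatorname{tr}(S^2)$ — which is Cauchy--Schwarz in the Hilbert--Schmidt inner product applied to $S$ and the orthogonal projection onto its range, equivalently the power-mean inequality for the nonzero eigenvalues of $S$ — yields
\begin{align*}
n^2 \le \binom{d+m-1}{m}\sum_{j,k=1}^n |\langle \tau_j, \tau_k\rangle|^{2m},
\end{align*}
which is the asserted sum bound; the case $m=1$ with $\binom{d}{1}=d$ is the "in particular" statement.

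For the higher-order (maximum) bound I would split the double sum into its diagonal part, which equals $\sum_{j=1}^n |\langle \tau_j,\tau_j\rangle|^{2m} = n$, and its $n(n-1)$ off-diagonal terms, each of which is at most $\max_{j\neq k}|\langle \tau_j,\tau_k\rangle|^{2m}$. Combining this with the sum bound gives $n + n(n-1)\max_{j\neq k}|\langle \tau_j,\tau_k\rangle|^{2m} \ge n^2/\binom{d+m-1}{m}$, and rearranging produces $\max_{j\neq k}|\langle \tau_j,\tau_k\rangle|^{2m} \ge \frac{1}{n-1}\big[\frac{n}{\binom{d+m-1}{m}} - 1\big]$; setting $m=1$ gives the first order Welch bound $\frac{n-d}{d(n-1)}$.

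I do not expect a serious obstacle here; the one point requiring care is the clean justification that $\operatorname{rank}(S)$ — equivalently, $\dim \operatorname{span}\{\tau_j^{\otimes m}\}$ — is bounded by $\binom{d+m-1}{m}$ rather than merely by $d^m$, which is precisely the content of the symmetry of the tensors $\tau_j^{\otimes m}$. An alternative packaging that sidesteps tensor notation is to work directly with the Gram matrix $G = (\langle \tau_j,\tau_k\rangle)_{j,k=1}^n$, observe that its $m$-th Hadamard power $G^{\circ m}$ is positive semidefinite of rank at most $\binom{d+m-1}{m}$ since it is the Gram matrix of $\{\tau_j^{\otimes m}\}$, and run the same trace estimate on $G^{\circ m}$; I would mention this reformulation since it is the form most readily generalized to the Banach space setting treated later in the paper.
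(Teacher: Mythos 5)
Your proof is correct and follows essentially the same route as the paper: the paper derives Theorem \ref{WELCHTHEOREM} as a corollary of Theorem \ref{WELCHBANACH}, whose proof is exactly your argument — pass to $\text{Sym}^m$, apply the Cauchy--Schwarz/power-mean inequality $(\operatorname{Tra}S)^2\leq \dim(\text{Sym}^m)\operatorname{Tra}(S^2)$ to the eigenvalues of the frame operator, and then split off the diagonal of the double sum. Your use of $\operatorname{rank}(S)$ in place of $\dim(\text{Sym}^m(\mathbb{C}^d))$ is a minor sharpening (the paper records the rank version separately in its Gram-matrix theorems), and your Hadamard-power remark likewise matches the paper's later reformulation.
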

 There are several theoretical and practical applications of Theorem \ref{WELCHTHEOREM} such as in the study of root-mean-square (RMS) absolute cross relation of unit vectors   \cite{SARWATEMEETING}, frame potential \cite{BENEDETTOFICKUS, CASAZZAFICKUSOTHERS, BODMANNHAASPOTENTIAL}, 
 correlations \cite{SARWATE},  codebooks \cite{DINGFENG}, numerical search algorithms  \cite{XIA, XIACORRECTION}, quantum measurements 
\cite{SCOTTTIGHT}, coding and communications \cite{TROPPDHILLON, STROHMERHEATH}, code division multiple access (CDMA) systems \cite{CHEBIRA1, CHEBIRA2}, wireless systems \cite{YATES}, compressed sensing \cite{TAN}, `game of Sloanes' \cite{JASPERKINGMIXON}, equiangular tight frames \cite{SUSTIKTROPP},  etc.

Some improvements of Theorem \ref{WELCHTHEOREM}  has been done in \cite{CHRISTENSENDATTAKIM, DATTAWELCHLMA, WALDRONSH, WALDRON2003}.  It is in the paper \cite{DATTAHOWARD} where the following generalization of Theorem \ref{WELCHTHEOREM} has been done for continuous collections.
 \begin{theorem}\cite{DATTAHOWARD}\label{DATTATHEOREM}
	Let $\mathbb{C}	\mathbb{P}^{n-1}$ be the complex projective space and $\mu$ be a normalized measure on $\mathbb{C}	\mathbb{P}^{n-1}$.  If $\{\tau_\alpha\}_{\alpha \in \mathbb{C}	\mathbb{P}^{n-1}}$ is a continuous frame for a $d$-dimensional subspace $\mathcal{H}$  of a Hilbert space $\mathcal{H}_0$, then 
	\begin{align*}
	\int_{\mathbb{C}	\mathbb{P}^{n-1}\times\mathbb{C}	\mathbb{P}^{n-1}}|\langle \tau_\alpha, \tau_\beta\rangle|^{2m}\, d(\mu\times\mu)(\alpha,\beta)=		\int_{\mathbb{C}	\mathbb{P}^{n-1}}\int_{\mathbb{C}	\mathbb{P}^{n-1}}|\langle \tau_\alpha, \tau_\beta\rangle|^{2m}\, d \mu(\alpha)\, d \mu(\beta)\geq \frac{1}{{d+m-1\choose m}}, \quad \forall m \in \mathbb{N}.
	\end{align*}
\end{theorem}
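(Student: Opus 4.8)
The plan is to lift the problem to the $m$-th symmetric tensor power of $\mathcal{H}$, where $2m$-th powers of inner products linearize into ordinary squared inner products, and then to run the ``trace versus trace of the square'' comparison that already underlies the discrete Welch bound.

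First I would set up the tensor-power dictionary. Equipping the symmetric tensor power $\text{Sym}^m(\mathcal{H})$ with its induced inner product, one has $\tau^{\otimes m}\in\text{Sym}^m(\mathcal{H})$ with $\langle\tau^{\otimes m},\sigma^{\otimes m}\rangle=\langle\tau,\sigma\rangle^{m}$ and $\|\tau^{\otimes m}\|=\|\tau\|^{m}$, and $\dim\text{Sym}^m(\mathcal{H})=\binom{d+m-1}{m}=:D$. Hence $|\langle\tau_\alpha,\tau_\beta\rangle|^{2m}=|\langle\tau_\alpha^{\otimes m},\tau_\beta^{\otimes m}\rangle|^{2}$, so the integral in question is an integral of squared moduli of pairwise inner products of the family $\{\tau_\alpha^{\otimes m}\}_{\alpha\in\mathbb{C}\mathbb{P}^{n-1}}$ in the $D$-dimensional Hilbert space $\text{Sym}^m(\mathcal{H})$; this family is again a continuous Bessel family (indeed a continuous frame) for $\text{Sym}^m(\mathcal{H})$ since $\{\tau_\alpha\}$ spans $\mathcal{H}$.

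Next I would introduce its frame operator $S\colon\text{Sym}^m(\mathcal{H})\to\text{Sym}^m(\mathcal{H})$, defined weakly by $Sx=\int_{\mathbb{C}\mathbb{P}^{n-1}}\langle x,\tau_\alpha^{\otimes m}\rangle\,\tau_\alpha^{\otimes m}\,d\mu(\alpha)$. It is well defined, self-adjoint, and positive, being a $\mu$-average of the positive rank-one operators $x\mapsto\langle x,\tau_\alpha^{\otimes m}\rangle\tau_\alpha^{\otimes m}$, hence diagonalizable with nonnegative eigenvalues $\lambda_1,\dots,\lambda_D$. The crux is the pair of trace identities obtained by interchanging the finite-dimensional trace with the $\mu$-integration. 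Since $\mu$ is normalized and (in the normalization of the cited setting) $\|\tau_\alpha\|\equiv 1$,
\[
\operatorname{tr}(S)=\int_{\mathbb{C}\mathbb{P}^{n-1}}\|\tau_\alpha^{\otimes m}\|^{2}\,d\mu(\alpha)=\int_{\mathbb{C}\mathbb{P}^{n-1}}\|\tau_\alpha\|^{2m}\,d\mu(\alpha)=1,
\]
while expanding $S^{2}$ and using Fubini (the integrand $|\langle\tau_\alpha,\tau_\beta\rangle|^{2m}\le 1$ is integrable over the probability space $\mathbb{C}\mathbb{P}^{n-1}\times\mathbb{C}\mathbb{P}^{n-1}$) gives
\[
\operatorname{tr}(S^{2})=\int_{\mathbb{C}\mathbb{P}^{n-1}}\int_{\mathbb{C}\mathbb{P}^{n-1}}|\langle\tau_\alpha^{\otimes m},\tau_\beta^{\otimes m}\rangle|^{2}\,d\mu(\alpha)\,d\mu(\beta)=\int_{\mathbb{C}\mathbb{P}^{n-1}\times\mathbb{C}\mathbb{P}^{n-1}}|\langle\tau_\alpha,\tau_\beta\rangle|^{2m}\,d(\mu\times\mu)(\alpha,\beta).
\]
Finally, Cauchy--Schwarz applied to the eigenvalues gives $\operatorname{tr}(S)^{2}=\big(\sum_{i}\lambda_i\big)^{2}\le D\sum_{i}\lambda_i^{2}=D\,\operatorname{tr}(S^{2})$, i.e. $\operatorname{tr}(S^{2})\ge 1/D=1/\binom{d+m-1}{m}$, which is precisely the asserted inequality.

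The algebraic core — the tensor identity, positivity of $S$, and the eigenvalue inequality — is exactly the discrete Welch argument, so the real content here is measure-theoretic. The step I expect to be the main obstacle is making the continuous frame operator $S$ rigorous as a weak (or Bochner) integral, verifying that $\{\tau_\alpha^{\otimes m}\}$ is a continuous Bessel family for $\text{Sym}^m(\mathcal{H})$, and justifying the interchange of the (finite-dimensional) trace with the $\mu$-integration in both identities; this is where the hypotheses that the index space is $\mathbb{C}\mathbb{P}^{n-1}$ with a normalized measure and that $\{\tau_\alpha\}$ is a continuous frame enter. It is worth noting that spanning of $\text{Sym}^m(\mathcal{H})$ by the $\tau_\alpha^{\otimes m}$, and indeed the full frame hypothesis, is not needed for the inequality itself: positivity of $S$ together with the two trace identities already suffices (and if the double integral were infinite the bound would be trivial).
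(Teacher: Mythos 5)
Your argument is correct, and it is essentially the method this paper itself uses: the paper does not reprove Theorem~\ref{DATTATHEOREM} (it is quoted from the cited source), but its own generalizations (Theorem~\ref{CONTINUOUSWELCHMAINSECOND} via the trace identities of Theorem~\ref{TRACEFORMULA}) proceed exactly as you do — lift to $\text{Sym}^m$, compute $\operatorname{Tra}(S)$ and $\operatorname{Tra}(S^2)$ by Fubini, and apply Cauchy--Schwarz to the nonnegative eigenvalues. You also correctly note the two points the statement leaves implicit, namely that $\|\tau_\alpha\|\equiv 1$ is needed for $\operatorname{tr}(S)=1$ and that spanning of $\text{Sym}^m(\mathcal{H})$ is not actually required, so there is nothing to fix.
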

Theorem \ref{DATTATHEOREM} has been recently generalized in full generality for Hilbert spaces and $\sigma$-finite  measure spaces by the author in \cite{MAHESHKRISHNA}.
\begin{theorem}\cite{MAHESHKRISHNA}\label{CONTINUOUSHILBERT}
	Let $(\Omega, \mu)$ be a  measure space and $\{\tau_\alpha\}_{\alpha\in \Omega}$ be a 	normalized continuous Bessel  family for $\mathcal{H}$ of dimension $d$. If the diagonal $\Delta\coloneqq \{(\alpha, \alpha):\alpha \in \Omega\}$ is measurable in the measure space $\Omega\times \Omega$, then 
	\begin{align*}
		\int_{\Omega\times\Omega}|\langle \tau_\alpha, \tau_\beta\rangle|^{2m}\, d(\mu\times\mu)(\alpha,\beta)=	\int_{\Omega}\int_{\Omega}|\langle \tau_\alpha, \tau_\beta\rangle|^{2m}\, d \mu(\alpha)\, d \mu(\beta)\geq \frac{\mu(\Omega)^2}{{d+m-1\choose m}}, \quad \forall m \in \mathbb{N}.
	\end{align*}	
	Further, we have the \textbf{higher order continuous Welch bounds} 
	\begin{align*}
		\sup _{\alpha, \beta \in \Omega, \alpha\neq \beta}|\langle \tau_\alpha, \tau_\beta\rangle |^{2m}\geq \frac{1}{(\mu\times\mu)((\Omega\times\Omega)\setminus\Delta)}	\left[\frac{	\mu(\Omega)^2}{{d+m-1 \choose m}}-(\mu\times\mu)(\Delta)\right],  \quad \forall m \in \mathbb{N}.
	\end{align*}
In particular, we have the \textbf{first order continuous Welch bound} 
	\begin{align*}
	\sup _{\alpha, \beta \in \Omega, \alpha\neq \beta}|\langle \tau_\alpha, \tau_\beta\rangle |^{2}\geq \frac{1}{(\mu\times\mu)((\Omega\times\Omega)\setminus\Delta)}\left[\frac{\mu(\Omega)^2}{d}-(\mu\times\mu)(\Delta)\right].
\end{align*}
\end{theorem}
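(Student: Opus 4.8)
The plan is to lift the problem to the $m$-th symmetric tensor power $\text{Sym}^m(\mathcal{H})$, where everything collapses to a ``first order'' computation with two traces. I would send each $\tau_\alpha$ to $\tau_\alpha^{\otimes m}\in\text{Sym}^m(\mathcal{H})$, recall $\dim\text{Sym}^m(\mathcal{H})=\binom{d+m-1}{m}=:D$, and record the two identities that do all the work: $\langle\tau_\alpha^{\otimes m},\tau_\beta^{\otimes m}\rangle=\langle\tau_\alpha,\tau_\beta\rangle^m$, so that $|\langle\tau_\alpha^{\otimes m},\tau_\beta^{\otimes m}\rangle|^2=|\langle\tau_\alpha,\tau_\beta\rangle|^{2m}$, and $\|\tau_\alpha^{\otimes m}\|=\|\tau_\alpha\|^m=1$. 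A preliminary point is that $\{\tau_\alpha^{\otimes m}\}_{\alpha\in\Omega}$ is again a continuous Bessel family: since $\{\tau_\alpha\}$ is Bessel for the $d$-dimensional $\mathcal{H}$, its bounded positive frame operator has finite trace $\int_\Omega\|\tau_\alpha\|^2\,d\mu(\alpha)=\mu(\Omega)$, whence $\mu(\Omega)<\infty$; the uniform bound $\|\tau_\alpha^{\otimes m}\|=1$ then gives $\int_\Omega|\langle x,\tau_\alpha^{\otimes m}\rangle|^2\,d\mu(\alpha)\leq\mu(\Omega)\|x\|^2$, and in particular $(\Omega,\mu)$ is finite, hence $\sigma$-finite, so Fubini is available below.

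Next I would introduce the frame operator $S\colon\text{Sym}^m(\mathcal{H})\to\text{Sym}^m(\mathcal{H})$, $Sx=\int_\Omega\langle x,\tau_\alpha^{\otimes m}\rangle\tau_\alpha^{\otimes m}\,d\mu(\alpha)$, a positive self-adjoint operator on the $D$-dimensional space $\text{Sym}^m(\mathcal{H})$ (so it is automatically diagonalizable with non-negative eigenvalues), and compute its first two traces. Expanding in an orthonormal basis and using Tonelli (all integrands non-negative) gives $\operatorname{tr}(S)=\int_\Omega\|\tau_\alpha^{\otimes m}\|^2\,d\mu(\alpha)=\mu(\Omega)$. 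For the second trace, $\operatorname{tr}(S^2)=\|S\|_{\mathrm{HS}}^2$, and writing $S^2$ as an iterated (Bochner) integral of rank-one operators and evaluating the trace against an orthonormal basis yields $\operatorname{tr}(S^2)=\int_\Omega\int_\Omega|\langle\tau_\alpha^{\otimes m},\tau_\beta^{\otimes m}\rangle|^2\,d\mu(\alpha)\,d\mu(\beta)=\int_{\Omega\times\Omega}|\langle\tau_\alpha,\tau_\beta\rangle|^{2m}\,d(\mu\times\mu)(\alpha,\beta)$, the interchange of the iterated integral with the finite basis sum being legitimate by Fubini once one notes $\int_\Omega\int_\Omega|\langle\tau_\alpha,\tau_\beta\rangle|^{2m}\,d\mu\,d\mu\leq\mu(\Omega)^2<\infty$.

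With the two traces in hand, let $\lambda_1,\dots,\lambda_D\geq0$ be the eigenvalues of $S$. Cauchy--Schwarz in $\mathbb{R}^D$ gives $\mu(\Omega)^2=\big(\textstyle\sum_i\lambda_i\big)^2\leq D\sum_i\lambda_i^2=D\operatorname{tr}(S^2)$, i.e. $\int_{\Omega\times\Omega}|\langle\tau_\alpha,\tau_\beta\rangle|^{2m}\,d(\mu\times\mu)\geq\mu(\Omega)^2/D$, which is the first assertion. For the higher-order bound I would split the domain along the diagonal $\Delta$ (measurable by hypothesis): on $\Delta$ the integrand equals $\|\tau_\alpha\|^{4m}=1$, so $\int_\Delta|\langle\tau_\alpha,\tau_\beta\rangle|^{2m}\,d(\mu\times\mu)=(\mu\times\mu)(\Delta)$, and therefore $\int_{(\Omega\times\Omega)\setminus\Delta}|\langle\tau_\alpha,\tau_\beta\rangle|^{2m}\,d(\mu\times\mu)\geq\mu(\Omega)^2/D-(\mu\times\mu)(\Delta)$. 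Bounding this last integral above by $(\mu\times\mu)\big((\Omega\times\Omega)\setminus\Delta\big)\cdot\sup_{\alpha\neq\beta}|\langle\tau_\alpha,\tau_\beta\rangle|^{2m}$ and dividing gives the higher-order continuous Welch bound; the first-order bound is the case $m=1$, where $D=\binom{d}{1}=d$.

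The tensor-power identities and the Cauchy--Schwarz step are routine; the part that needs genuine care is the measure theory. I would need joint measurability of $(\alpha,\beta)\mapsto\langle\tau_\alpha,\tau_\beta\rangle$ on $\Omega\times\Omega$ (expand through a fixed orthonormal basis and use that each $\alpha\mapsto\langle e_i,\tau_\alpha\rangle$ is measurable by the definition of a continuous Bessel family, together with separability of the finite-dimensional $\mathcal{H}$), the identity $\operatorname{tr}(S)=\int_\Omega\|\tau_\alpha^{\otimes m}\|^2\,d\mu$ and the Hilbert--Schmidt formula for $\operatorname{tr}(S^2)$ via interchanging a finite basis sum with an integral (Tonelli in the first case, Fubini with the a priori $L^1$ bound above in the second), and the hypothesis that $\Delta$ is measurable, used to split the integral. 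Finiteness of $\mu(\Omega)$ need not be assumed separately: it is forced by the Bessel condition, as noted. In the degenerate case $(\mu\times\mu)\big((\Omega\times\Omega)\setminus\Delta\big)=0$ the higher-order statements are read as the (then trivial) inequality $(\mu\times\mu)(\Delta)\geq\mu(\Omega)^2/D$.
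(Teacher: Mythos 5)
Your proposal is correct and follows essentially the same route as the paper: the paper obtains this theorem by setting $f_\alpha=\langle\cdot,\tau_\alpha\rangle$ and invoking its Banach-space continuous Welch bound (Theorem \ref{CONTINUOUSWELCHMAINSECOND}), whose proof is exactly your argument — pass to $\text{Sym}^m$, compute $\operatorname{Tra}(S)$ and $\operatorname{Tra}(S^2)$, apply Cauchy--Schwarz to the eigenvalues, and split the integral along the measurable diagonal. Your additional care with the measure-theoretic points (joint measurability, finiteness of $\mu(\Omega)$ forced by Besselness, and the Fubini/Tonelli interchanges) is a welcome tightening of details the paper leaves implicit, but it does not change the method.
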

Recently Theorem  \ref{WELCHTHEOREM}  and Theorem \ref{CONTINUOUSHILBERT}   are done in the context of Hilbert C*-modules by the author in \cite{MAHESHKRISHNA2}.
\begin{theorem}\cite{MAHESHKRISHNA2}
\textbf{(Modular Welch bounds)}
Let $\mathcal{A}$ be a unital commutative C*-algebra and $\mathcal{A}^d$ be the standard Hilbert C*-module over  $\mathcal{A}$.  Let $n\geq d$. If 	$\{\tau_j\}_{j=1}^n$ is any collection of unit inner product  vectors in $\mathcal{A}^d$, then 
\begin{align*}
	\sum_{j=1}^n\sum_{k=1}^n\|\langle \tau_j, \tau_k\rangle \|^{2m}	\geq \sum_{j=1}^n\sum_{k=1}^n\langle \tau_j, \tau_k\rangle ^{m}\langle \tau_k, \tau_j\rangle ^{m}\geq \frac{n^2}{{d+m-1\choose m}}, \quad \forall m \in \mathbb{N}.
\end{align*}
In particular,
\begin{align*}
	\sum_{j=1}^n\sum_{k=1}^n\|\langle \tau_j, \tau_k\rangle \|^{2} \geq 	\sum_{j=1}^n\sum_{k=1}^n\langle \tau_j, \tau_k\rangle \langle \tau_k, \tau_j\rangle \geq \frac{n^2}{{d}}.
\end{align*}
Further, 
\begin{align*}
	\text{(\textbf{Higher order modular Welch bounds})}	\quad		\max _{1\leq j,k \leq n, j\neq k}\|\langle \tau_j, \tau_k\rangle \|^{2m}\geq \frac{1}{n-1}\left[\frac{n}{{d+m-1\choose m}}-1\right], \quad \forall m \in \mathbb{N}.
\end{align*}
In particular,
\begin{align*}
	\text{(\textbf{First order modular Welch bound})}\quad 	\max _{1\leq j,k \leq n, j\neq k}\|\langle \tau_j, \tau_k\rangle \|^{2}\geq\frac{n-d}{d(n-1)}.
\end{align*}	
\end{theorem}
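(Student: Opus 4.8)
The plan is to reduce everything to the scalar Welch bounds of Theorem~\ref{WELCHTHEOREM} by means of Gelfand duality. Since $\mathcal{A}$ is a unital commutative C*-algebra, identify $\mathcal{A}\cong C(X)$ for a compact Hausdorff space $X$. Under this identification the standard module $\mathcal{A}^d$ becomes $C(X)^d$: an element $\tau$ is a continuous map $X\ni x\mapsto\tau(x)\in\mathbb{C}^d$, and the $\mathcal{A}$-valued inner product is pointwise, $\langle\tau,\sigma\rangle(x)=\langle\tau(x),\sigma(x)\rangle_{\mathbb{C}^d}$. In particular the hypothesis $\langle\tau_j,\tau_j\rangle=1_{\mathcal{A}}$ says exactly that $\|\tau_j(x)\|_{\mathbb{C}^d}=1$ for every $x\in X$ and every $j$; thus for each fixed $x$ the family $\{\tau_j(x)\}_{j=1}^n$ is a collection of $n\geq d$ unit vectors in $\mathbb{C}^d$.

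Next I would fix $x\in X$ and apply Theorem~\ref{WELCHTHEOREM} to $\{\tau_j(x)\}_{j=1}^n$: the summed bound gives $\sum_{j=1}^n\sum_{k=1}^n|\langle\tau_j(x),\tau_k(x)\rangle|^{2m}\geq n^2/\binom{d+m-1}{m}$, and the higher order bound gives $\max_{j\neq k}|\langle\tau_j(x),\tau_k(x)\rangle|^{2m}\geq\frac{1}{n-1}\big[\frac{n}{\binom{d+m-1}{m}}-1\big]$, for every $x\in X$. Since in $C(X)$ one has $\langle\tau_k,\tau_j\rangle=\langle\tau_j,\tau_k\rangle^{*}$, the element $\langle\tau_j,\tau_k\rangle^m\langle\tau_k,\tau_j\rangle^m$ is the nonnegative function $x\mapsto|\langle\tau_j(x),\tau_k(x)\rangle|^{2m}$, so the first pointwise inequality above is precisely the order inequality $\sum_{j=1}^n\sum_{k=1}^n\langle\tau_j,\tau_k\rangle^m\langle\tau_k,\tau_j\rangle^m\geq\frac{n^2}{\binom{d+m-1}{m}}1_{\mathcal{A}}$ in $\mathcal{A}$.

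For the remaining inequality in the first chain, I would use that $\|\langle\tau_j,\tau_k\rangle^m\langle\tau_k,\tau_j\rangle^m\|=\|\langle\tau_j,\tau_k\rangle\|^{2m}$ by the C*-identity and continuous functional calculus, and that every positive $a\in\mathcal{A}$ satisfies $a\leq\|a\|\,1_{\mathcal{A}}$; applying this to $a=\langle\tau_j,\tau_k\rangle^m\langle\tau_k,\tau_j\rangle^m$ and summing over $j,k$ gives $\sum_{j,k}\langle\tau_j,\tau_k\rangle^m\langle\tau_k,\tau_j\rangle^m\leq\big(\sum_{j,k}\|\langle\tau_j,\tau_k\rangle\|^{2m}\big)1_{\mathcal{A}}$. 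Concatenating with the previous paragraph yields the full chain, and taking $m=1$ (with $\binom{d}{1}=d$) gives the stated particular case. For the higher order modular bound, the same fiberwise estimate together with $\|\langle\tau_j,\tau_k\rangle\|=\sup_{x\in X}|\langle\tau_j(x),\tau_k(x)\rangle|$ and the fact that $\sup_x$ and $\max_{j\neq k}$ commute gives $\max_{j\neq k}\|\langle\tau_j,\tau_k\rangle\|^{2m}=\sup_{x\in X}\max_{j\neq k}|\langle\tau_j(x),\tau_k(x)\rangle|^{2m}\geq\frac{1}{n-1}\big[\frac{n}{\binom{d+m-1}{m}}-1\big]$, and $m=1$ recovers the first order modular bound.

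The argument has no deep obstacle once commutativity is exploited; the only points that need care are (i) verifying that the Gelfand isomorphism transports the standard Hilbert C*-module structure of $\mathcal{A}^d$ to the pointwise structure on $C(X)^d$, so that the unit-inner-product hypothesis genuinely becomes fiberwise unit vectors, and (ii) keeping track of the \emph{sense} of each inequality --- the summed bound is an order inequality between an element of $\mathcal{A}$ and a scalar multiple of $1_{\mathcal{A}}$, whereas the two max-bounds are scalar inequalities produced by the sup-norm. With these two bookkeeping points settled, the entire content is Theorem~\ref{WELCHTHEOREM} applied in each fiber.
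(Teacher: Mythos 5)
Your proof is correct, but note that the paper itself offers no proof of this statement: it is quoted verbatim from \cite{MAHESHKRISHNA2}, so the only meaningful comparison is with the method that paper (and the present one, in Theorems \ref{FIRSTWELCHBANACH} and \ref{WELCHBANACH}) uses for all results of this type. That method is operator-theoretic: one forms the frame operator $S_\tau=\sum_{j=1}^n\theta_{\tau_j}$ (or its $m$-th symmetric tensor power acting on $\text{Sym}^m$ of the module), computes $\operatorname{Tra}(S_\tau)$ and $\operatorname{Tra}(S_\tau^2)$ in terms of the correlations $\langle\tau_j,\tau_k\rangle$, and applies the Cauchy--Schwarz inequality $\left(\sum_k\lambda_k\right)^2\leq D\sum_k\lambda_k^2$ to an eigenvalue (or, in the modular setting, a suitable trace) list of length $D=\binom{d+m-1}{m}$. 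Your route is genuinely different: Gelfand duality converts $\mathcal{A}^d$ into $C(X,\mathbb{C}^d)$ with pointwise inner product, the unit-inner-product hypothesis into fiberwise unit vectors, and the whole theorem into one instance of Theorem \ref{WELCHTHEOREM} per point of $X$, reassembled via $a\leq\|a\|1_{\mathcal{A}}$ for positive $a$ and the interchange $\max_{j\neq k}\sup_x=\sup_x\max_{j\neq k}$. What this buys is economy and transparency --- it shows that commutativity of $\mathcal{A}$ is exactly what reduces the modular statement to a fiberwise corollary of Welch's original bound, with no need for traces or symmetric tensor powers of Hilbert C*-modules; the two bookkeeping points you flag (transport of the module structure, and the order-in-$\mathcal{A}$ versus scalar reading of each inequality) are the right ones and you handle both correctly. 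What it costs is generality: the fiberwise decomposition is unavailable without a Gelfand spectrum, whereas the frame-operator argument is the one that extends to noncommutative coefficients and to the Banach space setting that is the actual subject of this paper.
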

\begin{theorem}\cite{MAHESHKRISHNA2}
\textbf{(Modular Welch bounds)}
Let $\mathcal{A}$ be a commutative $\sigma$-finite W*-algebra or a commutative  AW*-algebra. Let $\mathcal{E}$ be  a Hilbert C*-module over $\mathcal{A}$ of rank $d$.  Let $n\geq d$. If 	$\{\tau_j\}_{j=1}^n$ is any collection of unit inner product  vectors in $\mathcal{E}$, then 
\begin{align*}
	\sum_{j=1}^n\sum_{k=1}^n\|\langle \tau_j, \tau_k\rangle \|^{2m}	\geq \sum_{j=1}^n\sum_{k=1}^n\langle \tau_j, \tau_k\rangle ^{m}\langle \tau_k, \tau_j\rangle ^{m}\geq \frac{n^2}{{d+m-1\choose m}}, \quad \forall m \in \mathbb{N}.
\end{align*}
In particular,
\begin{align*}
	\sum_{j=1}^n\sum_{k=1}^n\|\langle \tau_j, \tau_k\rangle \|^{2} \geq 	\sum_{j=1}^n\sum_{k=1}^n\langle \tau_j, \tau_k\rangle \langle \tau_k, \tau_j\rangle \geq \frac{n^2}{{d}}.
\end{align*}
Further, 
\begin{align*}
	\text{(\textbf{Higher order modular Welch bounds})}	\quad		\max _{1\leq j,k \leq n, j\neq k}\|\langle \tau_j, \tau_k\rangle \|^{2m}\geq \frac{1}{n-1}\left[\frac{n}{{d+m-1\choose m}}-1\right], \quad \forall m \in \mathbb{N}.
\end{align*}
In particular,
\begin{align*}
	\text{(\textbf{First order modular Welch bound})}\quad 	\max _{1\leq j,k \leq n, j\neq k}\|\langle \tau_j, \tau_k\rangle \|^{2}\geq\frac{n-d}{d(n-1)}.
\end{align*}	
\end{theorem}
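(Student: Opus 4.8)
The plan is to reduce the statement to the classical Welch bound (Theorem~\ref{WELCHTHEOREM}) by evaluating everything at the scalar fibres of the module, and then to extract the ``max'' inequality by isolating the diagonal of the double sum; the first step is where the real work lies. Since $\mathcal{A}$ is commutative, realise it as $\mathcal{A}\cong C(K)$ with $K$ a Stonean compact Hausdorff space (in the W*-case $\mathcal{A}\cong L^{\infty}(\Omega,\mu)$, and every statement below then holds $\mu$-almost everywhere rather than everywhere); each $\omega\in K$ is a character $a\mapsto a(\omega)$. One needs to make precise that a rank-$d$ Hilbert C*-module $\mathcal{E}$ over $\mathcal{A}$ has, at each $\omega$, a fibre Hilbert space $\mathcal{E}_{\omega}$ with $\dim\mathcal{E}_{\omega}\leq d$ such that, for any finite family $\{\tau_j\}_{j=1}^{n}$ in $\mathcal{E}$, the Gram matrix $G(\omega)\coloneqq\big[\langle\tau_j,\tau_k\rangle(\omega)\big]_{j,k=1}^{n}$ is the Gram matrix in $\mathcal{E}_{\omega}$ of the images of the $\tau_j$; in particular $G(\omega)$ is positive semidefinite of rank at most $d$ with $G(\omega)_{jj}=\langle\tau_j,\tau_j\rangle(\omega)=1$. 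For the free module $\mathcal{A}^{d}$ (the setting of the preceding theorem) this is immediate, and for an abstract rank-$d$ module over an AW*-algebra it follows from the structure theory of finite-rank Hilbert modules --- self-duality, realisation as $p\mathcal{A}^{N}$ for a projection $p$ of pointwise rank $\leq d$, and local constancy of the rank on the Stonean spectrum (for W*-algebras, the corresponding measurable field of Hilbert spaces of essentially bounded dimension).

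Granting this, fix $\omega\in K$ and factor $G(\omega)=V^{*}V$ with $V$ an $r\times n$ matrix, $r=\operatorname{rank} G(\omega)\leq d$; its columns $v^{\omega}_{1},\dots,v^{\omega}_{n}$ are unit vectors in $\mathbb{C}^{r}$ (or $\mathbb{R}^{r}$) with $|\langle v^{\omega}_{j},v^{\omega}_{k}\rangle|=|\langle\tau_j,\tau_k\rangle(\omega)|$. Theorem~\ref{WELCHTHEOREM} applied in $\mathbb{C}^{r}$, together with $\binom{r+m-1}{m}\leq\binom{d+m-1}{m}$, gives
\begin{align*}
\sum_{j=1}^{n}\sum_{k=1}^{n}|\langle\tau_j,\tau_k\rangle(\omega)|^{2m}=\sum_{j=1}^{n}\sum_{k=1}^{n}|\langle v^{\omega}_{j},v^{\omega}_{k}\rangle|^{2m}\geq\frac{n^{2}}{\binom{d+m-1}{m}}.
\end{align*}
Because $\omega$ is a character and $\langle\tau_k,\tau_j\rangle=\langle\tau_j,\tau_k\rangle^{*}$, one has $\big(\langle\tau_j,\tau_k\rangle^{m}\langle\tau_k,\tau_j\rangle^{m}\big)(\omega)=|\langle\tau_j,\tau_k\rangle(\omega)|^{2m}$; letting $\omega$ vary over $K$ (using that positivity in $C(K)$ is pointwise nonnegativity) upgrades the display to
\begin{align*}
\sum_{j=1}^{n}\sum_{k=1}^{n}\langle\tau_j,\tau_k\rangle^{m}\langle\tau_k,\tau_j\rangle^{m}\geq\frac{n^{2}}{\binom{d+m-1}{m}}\,1_{\mathcal{A}}
\end{align*}
in the order of $\mathcal{A}$. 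For the upper part of the chain, note that for each $(j,k)$ the element $\langle\tau_j,\tau_k\rangle^{m}\langle\tau_k,\tau_j\rangle^{m}=bb^{*}$ with $b\coloneqq\langle\tau_j,\tau_k\rangle^{m}$ is positive with $\|bb^{*}\|=\|b\|^{2}\leq\|\langle\tau_j,\tau_k\rangle\|^{2m}$, hence $bb^{*}\leq\|\langle\tau_j,\tau_k\rangle\|^{2m}\,1_{\mathcal{A}}$; summing and combining with the previous display proves
\begin{align*}
\sum_{j=1}^{n}\sum_{k=1}^{n}\|\langle\tau_j,\tau_k\rangle\|^{2m}\,1_{\mathcal{A}}\geq\sum_{j=1}^{n}\sum_{k=1}^{n}\langle\tau_j,\tau_k\rangle^{m}\langle\tau_k,\tau_j\rangle^{m}\geq\frac{n^{2}}{\binom{d+m-1}{m}}\,1_{\mathcal{A}},
\end{align*}
which is the first assertion; the ``in particular'' line is the case $m=1$ since $\binom{d}{1}=d$.

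For the higher order modular bound I would split the double sum into its diagonal ($j=k$) and off-diagonal ($j\neq k$) parts. On the diagonal $\langle\tau_j,\tau_j\rangle=1_{\mathcal{A}}$, so $\sum_{j=1}^{n}\langle\tau_j,\tau_j\rangle^{m}\langle\tau_j,\tau_j\rangle^{m}=n\,1_{\mathcal{A}}$, and the lower bound above reads
\begin{align*}
n\,1_{\mathcal{A}}+\sum_{\substack{j,k=1\\ j\neq k}}^{n}\langle\tau_j,\tau_k\rangle^{m}\langle\tau_k,\tau_j\rangle^{m}\geq\frac{n^{2}}{\binom{d+m-1}{m}}\,1_{\mathcal{A}}.
\end{align*}
By the same termwise estimate, $\sum_{j\neq k}\langle\tau_j,\tau_k\rangle^{m}\langle\tau_k,\tau_j\rangle^{m}\leq n(n-1)\max_{j\neq k}\|\langle\tau_j,\tau_k\rangle\|^{2m}\,1_{\mathcal{A}}$, so the two displays force the scalar inequality $n+n(n-1)\max_{j\neq k}\|\langle\tau_j,\tau_k\rangle\|^{2m}\geq n^{2}/\binom{d+m-1}{m}$, i.e.
\begin{align*}
\max_{1\leq j,k\leq n,\ j\neq k}\|\langle\tau_j,\tau_k\rangle\|^{2m}\geq\frac{1}{n-1}\left[\frac{n}{\binom{d+m-1}{m}}-1\right],
\end{align*}
and $m=1$ gives the first order bound $\frac{n-d}{d(n-1)}$. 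I expect the only genuine obstacle to be the fibrewise description of $\mathcal{E}$ in the first step; an alternative that avoids fibres is to put an $\mathcal{A}$-valued trace $\operatorname{Tr}$ on $\operatorname{End}_{\mathcal{A}}(\text{Sym}^{m}(\mathcal{E}))$ with $\operatorname{Tr}(\theta_{x,y})=\langle y,x\rangle$, evaluate $\operatorname{Tr}$ and $\operatorname{Tr}(\,\cdot\,^{2})$ on $\sum_{j}\theta_{\tau_j^{\otimes m},\tau_j^{\otimes m}}$ to get $n\,1_{\mathcal{A}}$ and $\sum_{j,k}\langle\tau_j,\tau_k\rangle^{m}\langle\tau_k,\tau_j\rangle^{m}$ respectively, and apply the operator Cauchy--Schwarz inequality $\binom{d+m-1}{m}\operatorname{Tr}(A^{2})\geq(\operatorname{Tr}A)^{2}$ for positive $A$ of pointwise rank $\leq\binom{d+m-1}{m}$; but this needs $\text{Sym}^{m}(\mathcal{E})$ to be a sufficiently well-behaved module, so the essential difficulty is the same.
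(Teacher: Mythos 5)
A preliminary remark: this theorem is not proved in the paper at all --- it is quoted verbatim from \cite{MAHESHKRISHNA2} as background in the introduction --- so there is no in-paper proof to compare against; I can only measure your argument against the techniques the paper uses for its own results. Your route is genuinely different from those. The paper's in-house method (Theorems \ref{FIRSTWELCHBANACH} and \ref{WELCHBANACH}, and by all appearances the cited modular proof as well, which your closing ``alternative'' paragraph essentially reconstructs) is to form the frame operator on $\text{Sym}^m$ of the ambient space, compute $\operatorname{Tra}(S)$ and $\operatorname{Tra}(S^2)$, and apply $(\sum\lambda_k)^2\le D\sum\lambda_k^2$ with $D=\binom{d+m-1}{m}$. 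You instead localize: Gelfand-transform $\mathcal{A}$ to $C(K)$, evaluate the Gram matrix at each character $\omega$, observe it is PSD of rank $\le d$ with unit diagonal, invoke the classical Welch bound (Theorem \ref{WELCHTHEOREM}) fibrewise, and lift back using that positivity in $C(K)$ is pointwise; the norm chain and the max bound then follow from $bb^*\le\|b\|^2 1_{\mathcal{A}}$ and the diagonal/off-diagonal split, all of which is correct. What your approach buys is that it needs no tensor-power frame operator and no $\mathcal{A}$-valued trace machinery, only the scalar Welch bound; what it costs is that the entire weight falls on the fibrewise description of a rank-$d$ module, which you rightly flag but only sketch. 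For $\mathcal{A}^d$ this is immediate, but for an abstract rank-$d$ module over a commutative AW*- or $\sigma$-finite W*-algebra the assertion that every localized Gram matrix has rank at most $d$ requires the Kaplansky structure theory (an orthogonal basis $\{e_i\}_{i=1}^d$ with projection-valued $\langle e_i,e_i\rangle$, giving $\langle\tau_j,\tau_k\rangle=\sum_{i=1}^d\langle\tau_j,e_i\rangle\langle e_i,\tau_k\rangle$ and hence a rank-$\le d$ factorization at each $\omega$); as written this step is a citation to folklore rather than a proof, and in the W*-case one must also say why the pointwise a.e.\ inequality upgrades to an order inequality in $L^\infty$. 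With that lemma supplied, your argument is complete and arguably more elementary than the trace-based one.
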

\begin{theorem}\cite{MAHESHKRISHNA2}
(\textbf{Continuous modular Welch bounds})
Let $\mathcal{A}$ be a commutative $\sigma$-finite W*-algebra or a commutative  AW*-algebra. Let $\mathcal{E}$ be  a Hilbert C*-module over $\mathcal{A}$ of rank $d$. 		Let $(G, \mu_G)$ be a  Lie group  and $\{\tau_g\}_{g\in G}$ be a 	 continuous modular Bessel  family for $\mathcal{E}$ such that $\langle \tau_g, \tau_g \rangle =1, \forall g \in G$. If the diagonal $\Delta\coloneqq \{(g, g):g \in G\}$ is measurable in  $G\times G$, then 

\begin{align*}
	\int_{G\times G}\|\langle \tau_g, \tau_h\rangle\|^{2m}\, d(\mu_G\times\mu_G)(g,h)&=	\int_{G}\int_{G}\|\langle \tau_g, \tau_h\rangle\|^{2m}\, d \mu_G(g)\, d \mu_G(h)\\
	&\geq	\int_{G\times G}\langle \tau_g, \tau_h\rangle^{m}\langle \tau_h, \tau_g\rangle^{m}\, d(\mu_G\times\mu_G)(g,h)\\
	&=	\int_{G}\int_{G}\langle \tau_g, \tau_h\rangle^{m}\langle \tau_h, \tau_g\rangle^{m}\, d \mu_G(g)\, d \mu_G(h)\geq \frac{\mu_G(G)^2}{{d+m-1\choose m}}, \quad \forall m \in \mathbb{N}.
\end{align*}	
In particular, 
\begin{align*}
	\int_{G\times G}\|\langle \tau_g, \tau_h\rangle\|^{2}\, d(\mu_G\times\mu_G)(g,h)&=	\int_{G}\int_{G}\|\langle \tau_g, \tau_h\rangle\|^{2}\, d \mu_G(g)\, d \mu_G(h)\\
	&\geq	\int_{G\times G}\langle \tau_g, \tau_h\rangle\langle \tau_h, \tau_g\rangle\, d(\mu_G\times\mu_G)(g,h)\\
	&=	\int_{G}\int_{G}\langle \tau_g, \tau_h\rangle\langle \tau_h, \tau_g\rangle\, d \mu_G(g)\, d \mu_G(h)\geq \frac{\mu_G(G)^2}{d}.
\end{align*}
Further, we have the \textbf{higher order continuous modular Welch bounds} 
\begin{align*}
	\sup _{g, h \in G, g\neq h}\|\langle \tau_g, \tau_h\rangle \|^{2m}\geq \frac{1}{(\mu_G\times\mu_G)((G\times G)\setminus\Delta)}	\left[\frac{	\mu_G(G)^2}{{d+m-1 \choose m}}-(\mu_G\times\mu_G)(\Delta)\right],  \quad \forall m \in \mathbb{N}.
\end{align*}
In particular, we have the \textbf{first order continuous modular Welch bound} 
\begin{align*}
	\sup _{g, h \in G, g\neq h}\|\langle \tau_g, \tau_h\rangle \|^{2}\geq \frac{1}{(\mu_G\times\mu_G)((G\times G)\setminus\Delta)}\left[\frac{\mu_G(G)^2}{d}-(\mu_G\times\mu_G)(\Delta)\right].
\end{align*}	
\end{theorem}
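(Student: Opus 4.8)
The strategy is to mimic the proof of Theorem \ref{CONTINUOUSHILBERT}, passing to the $m$-th symmetric tensor power and replacing the scalar trace by the $\mathcal{A}$-valued trace of adjointable operators on a finite-rank Hilbert C*-module. Fix $m\in\mathbb{N}$ and set $D\coloneqq\binom{d+m-1}{m}$, so that $\text{Sym}^m(\mathcal{E})$ is a Hilbert C*-module over $\mathcal{A}$ of rank $D$. Put $\sigma_g\coloneqq\tau_g^{\otimes m}\in\text{Sym}^m(\mathcal{E})$; then $\langle\sigma_g,\sigma_h\rangle=\langle\tau_g,\tau_h\rangle^m$ for all $g,h\in G$, and in particular $\langle\sigma_g,\sigma_g\rangle=1_{\mathcal{A}}$. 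First I would record that the hypotheses force $\mu_G(G)<\infty$: the frame operator $T\coloneqq\int_G\theta_{\tau_g,\tau_g}\,d\mu_G(g)$ of $\{\tau_g\}_{g\in G}$, where $\theta_{u,v}x\coloneqq\langle x,v\rangle u$, is a positive adjointable operator with $T\le B\,I$ for the Bessel bound $B$, so that $\mu_G(G)\,1_{\mathcal{A}}=\int_G\langle\tau_g,\tau_g\rangle\,d\mu_G(g)=\operatorname{tr}(T)\le\operatorname{tr}(B\,I)=Bd\,1_{\mathcal{A}}$. Then, by the module Cauchy--Schwarz inequality $\langle y,\sigma_g\rangle\langle\sigma_g,y\rangle\le\langle y,y\rangle\langle\sigma_g,\sigma_g\rangle=\langle y,y\rangle$, the family $\{\sigma_g\}_{g\in G}$ is a continuous modular Bessel family for $\text{Sym}^m(\mathcal{E})$, and its frame operator $S\coloneqq\int_G\theta_{\sigma_g,\sigma_g}\,d\mu_G(g)$ is a well-defined positive adjointable operator.

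The heart of the argument is the pair of trace computations. Interchanging the $\mathcal{A}$-valued trace with the integral,
\begin{align*}
\operatorname{tr}(S)=\int_G\operatorname{tr}(\theta_{\sigma_g,\sigma_g})\,d\mu_G(g)=\int_G\langle\sigma_g,\sigma_g\rangle\,d\mu_G(g)=\mu_G(G)\,1_{\mathcal{A}},
\end{align*}
and, since $\operatorname{tr}(\theta_{u,u}\theta_{v,v})=\langle v,u\rangle\langle u,v\rangle$ and Tonelli's theorem applies (the relevant integrand $\langle\sigma_g,\sigma_h\rangle\langle\sigma_h,\sigma_g\rangle=|\langle\tau_g,\tau_h\rangle|^{2m}$ is positive, $\le1_{\mathcal{A}}$, and $\mu_G(G)<\infty$),
\begin{align*}
\operatorname{tr}(S^2)=\int_G\int_G\langle\sigma_g,\sigma_h\rangle\langle\sigma_h,\sigma_g\rangle\,d\mu_G(g)\,d\mu_G(h)=\int_G\int_G\langle\tau_g,\tau_h\rangle^{m}\langle\tau_h,\tau_g\rangle^{m}\,d\mu_G(g)\,d\mu_G(h).
\end{align*}
Because $\mathcal{A}$ is commutative and $\text{Sym}^m(\mathcal{E})$ has finite rank $D$, the spectral theorem for positive adjointable operators over a commutative AW*-algebra (resp. a $\sigma$-finite W*-algebra) writes $S$ in diagonal form with eigenvalue elements $\lambda_1,\dots,\lambda_D\in\mathcal{A}^{+}$; then $\operatorname{tr}(S)=\sum_{i=1}^{D}\lambda_i$ and $\operatorname{tr}(S^2)=\sum_{i=1}^{D}\lambda_i^{2}$, and the scalar Cauchy--Schwarz inequality $\big(\sum_{i=1}^{D}\lambda_i\big)^2\le D\sum_{i=1}^{D}\lambda_i^{2}$, applied fiberwise through the Gelfand (resp. $L^\infty$) representation of $\mathcal{A}$, gives $\operatorname{tr}(S^2)\ge\frac1D(\operatorname{tr}S)^2$, i.e.
\begin{align*}
\int_G\int_G\langle\tau_g,\tau_h\rangle^{m}\langle\tau_h,\tau_g\rangle^{m}\,d\mu_G(g)\,d\mu_G(h)\ge\frac{\mu_G(G)^2}{\binom{d+m-1}{m}}\,1_{\mathcal{A}}.
\end{align*}

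It remains to read off the outer inequality and the higher-order bounds. In the commutative C*-algebra $\mathcal{A}$, writing $a\coloneqq\langle\tau_g,\tau_h\rangle$ (so $a^{*}=\langle\tau_h,\tau_g\rangle$), the positive element $a^{m}(a^{*})^{m}=(a^{*}a)^{m}=|a|^{2m}$ has norm $\|a\|^{2m}$, whence $\langle\tau_g,\tau_h\rangle^{m}\langle\tau_h,\tau_g\rangle^{m}\le\|\langle\tau_g,\tau_h\rangle\|^{2m}\,1_{\mathcal{A}}$; integrating (and invoking Tonelli for the stated equalities) yields the first displayed inequality of the theorem. For the higher-order bounds, decompose $G\times G=\Delta\sqcup((G\times G)\setminus\Delta)$: on $\Delta$ the integrand equals $1_{\mathcal{A}}$ and contributes $(\mu_G\times\mu_G)(\Delta)\,1_{\mathcal{A}}$, while on the complement it is at most $\sup_{g\neq h}\|\langle\tau_g,\tau_h\rangle\|^{2m}\,1_{\mathcal{A}}$ and so contributes at most $\sup_{g\neq h}\|\langle\tau_g,\tau_h\rangle\|^{2m}\,(\mu_G\times\mu_G)((G\times G)\setminus\Delta)\,1_{\mathcal{A}}$; comparing the sum with $\frac{\mu_G(G)^2}{D}\,1_{\mathcal{A}}$ and dividing by $(\mu_G\times\mu_G)((G\times G)\setminus\Delta)$ gives the higher-order continuous modular Welch bound, the first-order statement being the case $m=1$ with $\binom{d}{1}=d$.

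The main obstacle is not any of these inequalities but the operator- and measure-theoretic foundations in the modular setting: showing that $\int_G\theta_{\sigma_g,\sigma_g}\,d\mu_G(g)$ converges to an adjointable operator in a suitable topology on $\operatorname{End}^{*}(\text{Sym}^m(\mathcal{E}))$, that the $\mathcal{A}$-valued trace is monotone and commutes with this integral, and that a positive adjointable operator on a finite-rank Hilbert C*-module over a commutative AW*-algebra admits a diagonalization with eigenvalue elements in $\mathcal{A}^{+}$. The $\sigma$-finite W*-algebra case is slightly more delicate because the ``fibers'' exist only almost everywhere; one handles this through the $L^{\infty}$ (predual-weak) representation of $\mathcal{A}$, or by reducing to the AW*-algebra case.
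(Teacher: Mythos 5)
This theorem is quoted in the paper from \cite{MAHESHKRISHNA2} and is not proved here, so there is no internal proof to compare your attempt against. Your reconstruction nevertheless follows exactly the template the paper uses for the analogues it does prove (Theorems \ref{FIRSTORDERCONTINUOUS} and \ref{CONTINUOUSWELCHMAINSECOND}): compute $\operatorname{Tra}(S_{f,\tau})$ and $\operatorname{Tra}(S^2_{f,\tau})$, apply $\left(\sum_{i}\lambda_i\right)^2\le D\sum_{i}\lambda_i^2$ to the list of eigenvalues with $D=\binom{d+m-1}{m}$, and split the double integral over the diagonal $\Delta$ and its complement. Your transplantation to the modular setting is sound in outline: the observation that the Bessel condition together with $\langle\tau_g,\tau_g\rangle=1$ forces $\mu_G(G)<\infty$, the identity $a^m(a^*)^m=|a|^{2m}\le\|a\|^{2m}1_{\mathcal{A}}$ in the commutative algebra (which yields the outermost inequality of the statement), and the fiberwise Cauchy--Schwarz argument through the Gelfand representation are all correct. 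The one caveat is that the substance of the cited result lies precisely in the foundational facts you defer to the end: the diagonalization of a positive adjointable operator on a finite-rank Hilbert C*-module with eigenvalue elements in $\mathcal{A}^{+}$ is exactly where the hypotheses that $\mathcal{A}$ be a commutative AW*-algebra or a commutative $\sigma$-finite W*-algebra are genuinely used (Kadison-type diagonalization fails over general commutative C*-algebras), and the monotonicity and integral-interchange properties of the $\mathcal{A}$-valued trace also need to be supplied rather than assumed. Provided those ingredients are imported from the Hilbert C*-module literature, your argument goes through and is, as far as one can tell, the same argument as in the cited source.
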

Even after 48 years of work of Welch, to the best of our knowledge,  there are no Welch bounds or their twins or relatives  for Banach spaces. In this paper we derive both discrete and continuous Welch bounds for Banach spaces. We ask several open problems for further research.

\section{Discrete Welch bounds for Banach spaces}
Throughout the paper,  $\mathcal{X}$ denotes a finite dimensional  Banach space and $\mathcal{X}^*$ denotes its  dual. $ I_\mathcal{X}$ denotes the identity operator on $\mathcal{X}$.  We use $\mathbb{K}$ to denote $\mathbb{R}$  or $\mathbb{C}$.
 \begin{definition}
 Let 	$\mathcal{X}$ be a finite dimensional Banach space. Given a collection $\{\tau_j\}_{j=1}^n$ in $\mathcal{X}$  and 	  a collection $\{f_j\}_{j=1}^n$  in  $\mathcal{X}^*$,  
 \begin{enumerate}[\upshape(i)]
 	\item The \textbf{frame operator} is defined as 
 	\begin{align*}
 		S_{f, \tau}:\mathcal{X}\ni x \mapsto S_{f, \tau}x\coloneqq \sum_{j=1}^n
 		f_j(x)\tau_j \in\mathcal{X}.
 	\end{align*}
 	\item The \textbf{analysis operator} is defined as 
 	\begin{align*}
 		\theta_f:\mathcal{X}\ni x \mapsto \theta_fx\coloneqq (f_j(x))_{j=1}^n \in \mathbb{K}^n.
 	\end{align*}
 	\item The \textbf{synthesis operator} is defined as 
 		\begin{align*}
 		\theta_\tau: \mathbb{K}^n \ni (a_j)_{j=1}^n \mapsto \theta_\tau (a_j)_{j=1}^n \coloneqq \sum_{j=1}^n
 		a_j\tau_j \in\mathcal{X}.
 	\end{align*}
 \end{enumerate}
 \end{definition}
Following proposition follows easily.
\begin{proposition}
Given a collection $\{\tau_j\}_{j=1}^n$ in $\mathcal{X}$  and 	  a collection $\{f_j\}_{j=1}^n$  in  $\mathcal{X}^*$, the frame operator factors as $	S_{f, \tau}=\theta_\tau\theta_f$.	
\end{proposition}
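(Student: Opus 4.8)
The plan is to verify the claimed factorization by a direct computation on an arbitrary vector, unwinding the three definitions in turn. First I would fix $x\in\mathcal{X}$ and note that the composition $\theta_\tau\theta_f$ makes sense because the codomain $\mathbb{K}^n$ of the analysis operator $\theta_f$ is exactly the domain of the synthesis operator $\theta_\tau$, so $\theta_\tau\theta_f$ is a well-defined linear operator from $\mathcal{X}$ to $\mathcal{X}$.

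Next I would apply the definitions in order. By definition of $\theta_f$ we have $\theta_f x=(f_j(x))_{j=1}^n\in\mathbb{K}^n$. Feeding this sequence into $\theta_\tau$ and using its definition with $a_j=f_j(x)$ gives
\begin{align*}
	(\theta_\tau\theta_f)x=\theta_\tau\big((f_j(x))_{j=1}^n\big)=\sum_{j=1}^n f_j(x)\,\tau_j.
\end{align*}
The right-hand side is, by the very definition of the frame operator, equal to $S_{f,\tau}x$. Since $x\in\mathcal{X}$ was arbitrary, the two linear operators $S_{f,\tau}$ and $\theta_\tau\theta_f$ agree pointwise on $\mathcal{X}$, hence they are equal, which is the assertion.

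There is essentially no obstacle here: the statement is a formal consequence of the definitions, and the only point requiring (minimal) attention is the bookkeeping of domains and codomains so that the composition is legitimate. Accordingly I would keep the proof to the short chain of equalities above and not belabor it further.
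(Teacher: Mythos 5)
Your proof is correct and is exactly the computation the paper has in mind; the paper omits the argument entirely, remarking only that the proposition ``follows easily,'' and your unwinding of the three definitions ($\theta_f x=(f_j(x))_{j=1}^n$, then $\theta_\tau$ applied to that tuple yields $\sum_{j=1}^n f_j(x)\tau_j=S_{f,\tau}x$) is the intended justification.
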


\begin{definition}\cite{FREEMANODELL, THOMAS}
Let $\{\tau_j\}_{j=1}^n$ be a collection in  $\mathcal{X}$ and 	$\{f_j\}_{j=1}^n$ be a collection in  $\mathcal{X}^*.$ The pair $ (\{f_j \}_{j=1}^n, \{\tau_j \}_{j=1}^n) $ is said to be an \textbf{approximate  Schauder frame (ASF)} for $\mathcal{X}$ if the map 
\begin{align*}
	S_{f, \tau}:\mathcal{X}\ni x \mapsto S_{f, \tau}x\coloneqq \sum_{j=1}^n
	f_j(x)\tau_j \in\mathcal{X}.
\end{align*}
is invertible. If $S_{f, \tau}=\lambda I_\mathcal{X}$, for some non zero scalar $\lambda$, then $ (\{f_j \}_{j=1}^n, \{\tau_j \}_{j=1}^n) $ is called as a \textbf{tight ASF} for $\mathcal{X}$. 
\end{definition} 
Following theorem says that we can recover the trace of  frame operator using ASFs.
\begin{theorem}\label{DFBS}
Given a collection $\{\tau_j\}_{j=1}^n$ in $\mathcal{X}$  and 	  a collection $\{f_j\}_{j=1}^n$  in  $\mathcal{X}^*$, we have 
\begin{align*}
&\operatorname{Tra}(S_{f,\tau})=\sum_{j=1}^nf_j(\tau_j), \\
& \operatorname{Tra}(S^2_{f,\tau})=\sum_{j=1}^n\sum_{k=1}^nf_j(\tau_k)f_k(\tau_j).
\end{align*}	
\end{theorem}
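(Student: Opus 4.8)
The plan is to exploit the factorization $S_{f,\tau}=\theta_\tau\theta_f$ supplied by the preceding proposition, together with the cyclicity of the trace on operators between finite dimensional spaces. First I would identify the composition $\theta_f\theta_\tau$ as an operator on $\mathbb{K}^n$, hence an $n\times n$ matrix, and compute its entries: feeding the $k$-th standard basis vector of $\mathbb{K}^n$ into $\theta_\tau$ returns $\tau_k$, and $\theta_f$ then sends $\tau_k$ to $(f_j(\tau_k))_{j=1}^n$, so the $(j,k)$ entry of $\theta_f\theta_\tau$ is precisely $f_j(\tau_k)$.

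For the first identity, cyclicity of the trace gives $\operatorname{Tra}(S_{f,\tau})=\operatorname{Tra}(\theta_\tau\theta_f)=\operatorname{Tra}(\theta_f\theta_\tau)$, and reading off the diagonal of the matrix just computed yields $\operatorname{Tra}(\theta_f\theta_\tau)=\sum_{j=1}^n f_j(\tau_j)$. For the second identity, I would write $S_{f,\tau}^2=\theta_\tau\theta_f\theta_\tau\theta_f=\theta_\tau(\theta_f\theta_\tau)\theta_f$ and apply cyclicity again to get $\operatorname{Tra}(S_{f,\tau}^2)=\operatorname{Tra}\big((\theta_f\theta_\tau)(\theta_f\theta_\tau)\big)=\operatorname{Tra}\big((\theta_f\theta_\tau)^2\big)$. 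Squaring the matrix with entries $f_j(\tau_k)$ produces a matrix whose $(j,j)$ entry is $\sum_{k=1}^n f_j(\tau_k)f_k(\tau_j)$, and summing over $j$ gives the stated formula.

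An alternative route, which avoids matrices, is to compute the trace directly through a basis $\{e_i\}_{i=1}^d$ of $\mathcal{X}$ with dual basis $\{e_i^*\}_{i=1}^d$: one has $\operatorname{Tra}(S_{f,\tau})=\sum_{i=1}^d e_i^*(S_{f,\tau}e_i)=\sum_{j=1}^n\sum_{i=1}^d f_j(e_i)\,e_i^*(\tau_j)$, and since $\sum_{i=1}^d e_i^*(\tau_j)e_i=\tau_j$ and $f_j$ is linear, this collapses to $\sum_{j=1}^n f_j(\tau_j)$; the computation for $S_{f,\tau}^2$ is the same, inserting the resolution of the identity one extra time. I do not anticipate any genuine obstacle: the content is the cyclicity of the trace and careful index bookkeeping, and the only hypothesis actually used is that $\mathcal{X}$ is finite dimensional, so that $\operatorname{Tra}$ is well defined.
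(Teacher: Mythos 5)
Your proposal is correct, but your primary route is not the one the paper takes. The paper applies the trace formula for finite rank operators on a Banach space directly: since $S_{f,\tau}=\sum_{j=1}^n f_j(\cdot)\tau_j$, the definition of trace gives $\operatorname{Tra}(S_{f,\tau})=\sum_{j=1}^n f_j(\tau_j)$ at once, and then it rewrites $S_{f,\tau}^2x=\sum_{j=1}^n f_j(x)\,(S_{f,\tau}\tau_j)$ as another finite rank operator of the same shape, so that the same formula yields $\operatorname{Tra}(S_{f,\tau}^2)=\sum_{j=1}^n f_j(S_{f,\tau}\tau_j)=\sum_{j=1}^n\sum_{k=1}^n f_j(\tau_k)f_k(\tau_j)$. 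You instead factor $S_{f,\tau}=\theta_\tau\theta_f$, identify $\theta_f\theta_\tau$ with the $n\times n$ matrix $[f_j(\tau_k)]_{j,k}$, and invoke cyclicity of the trace so that both identities become statements about the trace of that matrix and of its square. Both arguments are valid and amount to the same index bookkeeping; the paper's version has the advantage of not needing cyclicity and of transferring verbatim to the continuous setting (its Theorem \ref{TRACEFORMULA}, which is essentially your basis/dual-basis alternative), while your matrix route makes explicit the role of the Gram matrix $G_{f,\tau}=\theta_f\theta_\tau$, anticipating the paper's later Theorem \ref{GRAMFIRST}. One small caveat: if you use cyclicity, you should say a word about why the Banach-space trace of $\theta_\tau\theta_f$ agrees with the matrix trace of $\theta_f\theta_\tau$ (in finite dimensions this is standard, and your second, basis-based argument sidesteps the issue entirely). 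Your closing remark is also slightly stronger than needed: the trace formula used here is valid for finite rank operators on any Banach space, so finite dimensionality of $\mathcal{X}$ is convenient but not essential for this particular theorem.
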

\begin{proof}
From the expression of $S_{f,\tau}$ and from the  definition of trace of operator in Banach space (see \cite{JOHNSONSZANKOWSKI}) we get

\begin{align*}
\operatorname{Tra}(S_{f,\tau})=\sum_{j=1}^nf_j(\tau_j).
\end{align*}
Now 
\begin{align*}
S_{f,\tau}^2x=\sum_{j=1}^n
f_j(x)(S_{f,\tau}\tau_j), \quad \forall x \in \mathcal{X}.
\end{align*}
 Again using the definition of trace we get 
  \begin{align*}
  \operatorname{Tra}(S^2_{f,\tau})&=\sum_{j=1}^nf_j(S_{f,\tau}\tau_j)=\sum_{j=1}^nf_j\left(\sum_{k=1}^nf_k(\tau_j)\tau_k\right)\\
  &=\sum_{j=1}^n\sum_{k=1}^nf_j(\tau_k)f_k(\tau_j).
  \end{align*}
\end{proof}
Now we can derive the first important result of the paper.
  \begin{theorem}\label{FIRSTWELCHBANACH}
  \textbf{(First order  Welch bound for Banach spaces)}	Let $\{\tau_j\}_{j=1}^n$ be a collection in a finite dimensional Banach space $\mathcal{X}$ of dimension $d$  and 	$\{f_j\}_{j=1}^n$ be a collection in  $\mathcal{X}^*.$ Let $n\geq d$. If the  operator 
  $	S_{f, \tau}:\mathcal{X}\ni x \mapsto S_{f, \tau}x\coloneqq \sum_{j=1}^n
  	f_j(x)\tau_j \in
  	\mathcal{X}$ is diagonalizable and its eigenvalues are all non negative, then 
  	\begin{align}\label{GENERALIZEDWELCH}
 \sum_{1\leq j, k \leq n}f_j(\tau_k)f_k(\tau_j)= \sum_{j=1}^n\sum_{k=1}^nf_j(\tau_k)f_k(\tau_j)\geq \frac{1}{d}	\left(\sum_{j=1}^n
  f_j(\tau_j)\right)^2=\frac{1}{\text{dim}(\mathcal{X})}	\left(\sum_{j=1}^n
  f_j(\tau_j)\right)^2.
  	\end{align}
  	and 
  	
  	\begin{align*}
  \max _{1\leq j,k \leq n, j\neq k}|f_j(\tau_k)f_k(\tau_j)|&\geq \frac{\frac{1}{d}	\left(\sum_{j=1}^n
  	f_j(\tau_j)\right)^2-	\sum_{j=1}^n
  	|f_j(\tau_j)|^2}{n^2-n}\\
  &=\frac{\frac{1}{\text{dim}(\mathcal{X})}	\left(\sum_{j=1}^n
  	f_j(\tau_j)\right)^2-	\sum_{j=1}^n
  	|f_j(\tau_j)|^2}{n^2-n},
  	\end{align*}
  	\begin{align*}
  	\max _{1\leq j,k \leq n, j\neq k}|f_j(\tau_k)|&\geq \sqrt{\frac{\frac{1}{d}	\left(\sum_{j=1}^n
  			f_j(\tau_j)\right)^2-	\sum_{j=1}^n
  			|f_j(\tau_j)|^2}{n^2-n}}\\
  		&=\sqrt{\frac{\frac{1}{\text{dim}(\mathcal{X})}	\left(\sum_{j=1}^n
  				f_j(\tau_j)\right)^2-	\sum_{j=1}^n
  				|f_j(\tau_j)|^2}{n^2-n}}.
  	\end{align*}
  	Further, equality holds in Inequality (\ref{GENERALIZEDWELCH}) if and only if $ (\{f_j \}_{j=1}^n, \{\tau_j \}_{j=1}^n) $ is  tight  ASF for $\mathcal{X}$. 
  	In particular, if $f_j(\tau_j)=1$ for all $1\leq j \leq n$, then 
  \begin{align*}
  \max _{1\leq j,k \leq n, j\neq k}|f_j(\tau_k)f_k(\tau_j)|\geq \frac{n-d}{d(n-1)}=\frac{n-\text{dim}(\mathcal{X})}{\text{dim}(\mathcal{X})(n-1)}
  \end{align*}
  and we have \textbf{first order (discrete) Welch bound for Banach spaces}
  \begin{align*}
	 \max _{1\leq j,k \leq n, j\neq k}|f_j(\tau_k)|\geq \sqrt{\frac{n-d}{d(n-1)}}= \sqrt{\frac{n-\text{dim}(\mathcal{X})}{\text{dim}(\mathcal{X})(n-1)}}.
  \end{align*}
  \end{theorem}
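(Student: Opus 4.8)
The plan is to pass immediately to the two trace identities of Theorem \ref{DFBS} and then feed the eigenvalues of $S_{f,\tau}$ into the Cauchy--Schwarz inequality. Let $\lambda_1,\dots,\lambda_d$ denote the eigenvalues of $S_{f,\tau}$ repeated according to multiplicity; by hypothesis these are non-negative real numbers, and since $\mathcal{X}$ is finite dimensional and $S_{f,\tau}$ is diagonalizable the Banach-space trace is the ordinary one, so $\operatorname{Tra}(S_{f,\tau}) = \sum_{i=1}^d \lambda_i$ and $\operatorname{Tra}(S_{f,\tau}^2) = \sum_{i=1}^d \lambda_i^2$. Combining this with Theorem \ref{DFBS} gives $\sum_{j=1}^n f_j(\tau_j) = \sum_{i=1}^d \lambda_i$ and $\sum_{j=1}^n\sum_{k=1}^n f_j(\tau_k)f_k(\tau_j) = \sum_{i=1}^d \lambda_i^2$; in particular, even when $\mathbb{K} = \mathbb{C}$, both of these numbers are non-negative reals, which is what makes the squarings below legitimate.

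Applying the Cauchy--Schwarz inequality to $(\lambda_1,\dots,\lambda_d)$ and $(1,\dots,1)$ in $\mathbb{R}^d$ yields $(\sum_{i=1}^d \lambda_i)^2 \le d \sum_{i=1}^d \lambda_i^2$, and substituting the two identities turns this into Inequality (\ref{GENERALIZEDWELCH}). For the equality characterisation, equality in Cauchy--Schwarz forces $\lambda_1 = \dots = \lambda_d$, and since $S_{f,\tau}$ is diagonalizable this is equivalent to $S_{f,\tau} = \lambda I_{\mathcal{X}}$ for a (non-negative) scalar $\lambda$; under the standing hypotheses this is exactly the statement that $(\{f_j\}_{j=1}^n, \{\tau_j\}_{j=1}^n)$ is a tight ASF, and conversely a tight ASF has $S_{f,\tau} = \lambda I_{\mathcal{X}}$ with $\lambda \neq 0$ (and $\lambda > 0$ by the eigenvalue hypothesis), which makes Cauchy--Schwarz an equality.

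Next I would split the double sum into its diagonal and off-diagonal parts, $\sum_{j,k} f_j(\tau_k)f_k(\tau_j) = \sum_{j=1}^n f_j(\tau_j)^2 + \sum_{j \ne k} f_j(\tau_k)f_k(\tau_j)$, and estimate the off-diagonal term from both sides. On one hand, the triangle inequality gives $|\sum_{j\ne k} f_j(\tau_k)f_k(\tau_j)| \le (n^2-n)\max_{j\ne k}|f_j(\tau_k)f_k(\tau_j)|$, there being $n^2-n$ ordered pairs with $j \ne k$. On the other hand, the reverse triangle inequality together with $|\sum_j f_j(\tau_j)^2| \le \sum_j |f_j(\tau_j)|^2$ gives $|\sum_{j\ne k} f_j(\tau_k)f_k(\tau_j)| \ge \sum_{j,k} f_j(\tau_k)f_k(\tau_j) - \sum_j |f_j(\tau_j)|^2 \ge \frac1d(\sum_j f_j(\tau_j))^2 - \sum_j |f_j(\tau_j)|^2$, the last step being Inequality (\ref{GENERALIZEDWELCH}). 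Dividing by $n^2-n$ yields the first maximum bound; since $\max_{j\ne k}|f_j(\tau_k)f_k(\tau_j)| \le (\max_{j\ne k}|f_j(\tau_k)|)^2$ (the two index sets coincide after swapping $j$ and $k$), taking square roots yields the second. Finally, specialising to $f_j(\tau_j) = 1$ for all $j$ makes $\sum_j f_j(\tau_j) = n$ and $\sum_j |f_j(\tau_j)|^2 = n$, so the numerator becomes $\frac{n^2}{d} - n$ and the fraction simplifies to $\frac{n-d}{d(n-1)}$, giving the displayed first-order Welch bound for Banach spaces.

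The one place that needs care is the complex case: one has to observe that the eigenvalue hypothesis is precisely what guarantees $\sum_j f_j(\tau_j)$ and $\sum_{j,k} f_j(\tau_k)f_k(\tau_j)$ are real and non-negative, and one must route the lower estimate on the off-diagonal sum through the reverse triangle inequality (absorbing $|\sum_j f_j(\tau_j)^2| \le \sum_j |f_j(\tau_j)|^2$) rather than through any order comparison that would only be available over $\mathbb{R}$. Beyond that the argument is just the finite-dimensional spectral theorem and Cauchy--Schwarz, so I do not expect any serious obstacle.
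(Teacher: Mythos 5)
Your proposal is correct and follows essentially the same route as the paper's proof: the trace identities of Theorem \ref{DFBS}, the Cauchy--Schwarz inequality applied to the non-negative eigenvalues of $S_{f,\tau}$, the diagonal/off-diagonal split of the double sum bounded by $(n^2-n)$ times the maximum, and the observation that $\max_{j\neq k}|f_j(\tau_k)f_k(\tau_j)|\leq(\max_{j\neq k}|f_j(\tau_k)|)^2$. Your explicit routing of the off-diagonal lower bound through the reverse triangle inequality is a slightly more careful rendering of the complex case than the paper's direct chain of inequalities, but it is the same argument in substance.
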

  \begin{proof}
  	Let $\lambda_1, \dots, \lambda_d$ be eigenvalues of $	S_{f, \tau}$. Then $\lambda_1, \dots, \lambda_d\geq0$ and using the diagonalizability of $	S_{f, \tau}$  we get 
  	\begin{align}\label{INTERMEDIATE}
  \left(\sum_{j=1}^n
  f_j(\tau_j)\right)^2&=	(\operatorname{Tra}(S_{f,\tau}))^2=\left(\sum_{k=1}^d
  \lambda_k\right)^2\leq d \sum_{k=1}^d
  \lambda_k^2 \nonumber\\
  &=d\operatorname{Tra}(S^2_{f,\tau})=d\sum_{j=1}^n\sum_{k=1}^nf_j(\tau_k)f_k(\tau_j).
  	\end{align}
 For the second inequality,  
   \begin{align*}
  \frac{1}{d} \left(\sum_{j=1}^n
  f_j(\tau_j)\right)^2&\leq \sum_{j=1}^n\sum_{k=1}^nf_j(\tau_k)f_k(\tau_j)=\sum_{j=1}^nf_j(\tau_j)^2+\sum_{j,k=1, j\neq k}^nf_j(\tau_k)f_k(\tau_j)\\
  &\leq \sum_{j=1}^n|f_j(\tau_j)|^2+\sum_{j,k=1, j\neq k}^n|f_j(\tau_k)f_k(\tau_j)|\\
  &\leq \sum_{j=1}^n|f_j(\tau_j)|^2+ (n^2-n)\max _{1\leq j,k \leq n, j\neq k}|f_j(\tau_k)f_k(\tau_j)|
  \end{align*}	
  which gives 
  \begin{align}\label{PROOFFIRST}
  \max _{1\leq j,k \leq n, j\neq k}|f_j(\tau_k)f_k(\tau_j)|\geq \frac{\frac{1}{d}	\left(\sum_{j=1}^n
  	f_j(\tau_j)\right)^2-	\sum_{j=1}^n
  	|f_j(\tau_j)|^2}{n^2-n}.
  \end{align}
  Now let $1\leq j,k \leq n, j\neq k$ be fixed. Then 
  
  \begin{align*}
  |f_j(\tau_k)f_k(\tau_j)|&\leq \max _{1\leq j,k \leq n, j\neq k}|f_j(\tau_k)|\max _{1\leq j,k \leq n, j\neq k}|f_k(\tau_j)|\\
  &=\max _{1\leq j,k \leq n, j\neq k}|f_j(\tau_k)|\max _{1\leq j,k \leq n, j\neq k}|f_j(\tau_k)|=\left(\max _{1\leq j,k \leq n, j\neq k}|f_j(\tau_k)|\right)^2.
  \end{align*}
  Therefore 
  \begin{align*}
  \max _{1\leq j,k \leq n, j\neq k}|f_j(\tau_k)f_k(\tau_j)|\leq \left(\max _{1\leq j,k \leq n, j\neq k}|f_j(\tau_k)|\right)^2
  \end{align*}
  Using Inequality (\ref{PROOFFIRST}) we now get 
  	\begin{align}\label{PROOFSECOND}
  \max _{1\leq j,k \leq n, j\neq k}|f_j(\tau_k)|\geq \sqrt{\frac{\frac{1}{d}	\left(\sum_{j=1}^n
  		f_j(\tau_j)\right)^2-	\sum_{j=1}^n
  		|f_j(\tau_j)|^2}{n^2-n}}.
  \end{align}
  Whenever $f_j(\tau_j)=1$ for all $1\leq j \leq n$, Inequality  (\ref{PROOFFIRST}) gives 
  \begin{align*}
\max _{1\leq j,k \leq n, j\neq k}|f_j(\tau_k)f_k(\tau_j)|\geq \frac{\frac{n^2}{d}-n}{n^2-n}=\frac{n-d}{d(n-1)}.
  \end{align*}
   and Inequality (\ref{PROOFSECOND}) gives 
   \begin{align*}
   \max _{1\leq j,k \leq n, j\neq k}|f_j(\tau_k)|\geq \sqrt{\frac{n-d}{d(n-1)}}.
   \end{align*}
   Equality   holds in Inequality  (\ref{INTERMEDIATE}) if and only if 
   \begin{align*}
   \left(\sum_{k=1}^d
   \lambda_k\right)^2= \left(\sum_{k=1}^d1\right) \left(\sum_{k=1}^d
   \lambda_k^2\right)
   \end{align*}
   if and only if 
   \begin{align*}
   \lambda_k=a, \text{ for some } a>0, \forall 1\leq k \leq n
   \end{align*}
   if and only if $S_{f,\tau}$ is a tight ASF for $\mathcal{X}$. 
  \end{proof}
  We next derive higher order Welch bounds for Banach spaces. First we need a standard result.

  \begin{theorem}\cite{COMON, BOCCI}\label{SYMMETRICTENSORDIMENSION}
  	If $\mathcal{V}$ is a vector space of dimension $d$ and $\text{Sym}^m(\mathcal{V})$ denotes the vector space of symmetric m-tensors, then 
  	\begin{align*}
  		\text{dim}(\text{Sym}^m(\mathcal{V}))={d+m-1 \choose m}, \quad \forall m \in \mathbb{N}.
  	\end{align*}
  \end{theorem}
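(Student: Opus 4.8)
The plan is to produce an explicit basis of $\operatorname{Sym}^m(\mathcal{V})$ indexed by the degree-$m$ monomials in $d$ variables and then count those monomials. First I would fix a basis $\{e_1,\dots,e_d\}$ of $\mathcal{V}$ and recall a concrete model of $\operatorname{Sym}^m(\mathcal{V})$: either the quotient of the tensor power $\mathcal{V}^{\otimes m}$ by the subspace spanned by all differences $v_1\otimes\cdots\otimes v_m-v_{\sigma(1)}\otimes\cdots\otimes v_{\sigma(m)}$ (with $\sigma$ ranging over permutations of $\{1,\dots,m\}$), or the subspace of symmetric tensors inside $\mathcal{V}^{\otimes m}$; since $\mathbb{K}\in\{\mathbb{R},\mathbb{C}\}$ has characteristic zero the symmetrizer $\frac{1}{m!}\sum_{\sigma}\sigma$ identifies the two models, so the choice is immaterial for a dimension count. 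Write $e_{i_1}\cdots e_{i_m}$ for the symmetric product of the basis vectors.

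Next I would check that $\mathcal{B}\coloneqq\{e_{i_1}\cdots e_{i_m}:1\le i_1\le i_2\le\cdots\le i_m\le d\}$ is a basis of $\operatorname{Sym}^m(\mathcal{V})$. Spanning is immediate: the pure tensors $e_{j_1}\otimes\cdots\otimes e_{j_m}$ span $\mathcal{V}^{\otimes m}$, and each of their images in $\operatorname{Sym}^m(\mathcal{V})$ equals some element of $\mathcal{B}$ after sorting the indices into nondecreasing order. For linear independence I would invoke the canonical algebra isomorphism $\operatorname{Sym}(\mathcal{V})\cong\mathbb{K}[x_1,\dots,x_d]$ sending $e_i\mapsto x_i$: it carries $\mathcal{B}$ bijectively onto the set of distinct monomials of degree $m$, and distinct monomials are $\mathbb{K}$-linearly independent in the polynomial ring, hence $\mathcal{B}$ is linearly independent. (Alternatively one can argue intrinsically: the symmetrizer sends the distinct tensors $e_{i_1}\otimes\cdots\otimes e_{i_m}$ with $i_1\le\cdots\le i_m$ to nonzero scalar multiples of the elements of $\mathcal{B}$, so independence is inherited from that of the standard tensor basis.)

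Finally I would count $\mathcal{B}$. A nondecreasing tuple $1\le i_1\le\cdots\le i_m\le d$ is the same data as a size-$m$ multiset drawn from $\{1,\dots,d\}$, equivalently a tuple of nonnegative exponents $(a_1,\dots,a_d)$ with $a_1+\cdots+a_d=m$, where $a_r$ is the multiplicity of $r$. By the stars-and-bars argument — arranging $m$ indistinguishable stars and $d-1$ bars in a line of $m+d-1$ symbols — the number of such exponent tuples is $\binom{m+d-1}{d-1}=\binom{d+m-1}{m}$, which therefore equals $\dim\operatorname{Sym}^m(\mathcal{V})$.

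The only step requiring genuine care is the linear independence of $\mathcal{B}$; the spanning statement and the combinatorial count are routine bookkeeping. I expect the cleanest route to independence to be the polynomial-ring identification above, which has the additional virtue of making transparent why $\operatorname{Sym}^m(\mathcal{V})$ is to be thought of as the space of homogeneous degree-$m$ polynomials in $d$ variables, and hence why its Hilbert-series coefficient $\binom{d+m-1}{m}$ is the right answer.
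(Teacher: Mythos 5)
The paper does not prove this statement at all; it quotes it as a standard result with citations to the literature and uses it as a black box. Your proposal is a correct and entirely standard proof: the identification of $\operatorname{Sym}^m(\mathcal{V})$ with degree-$m$ homogeneous polynomials (valid here since $\mathbb{K}$ has characteristic zero, so the quotient and subspace models agree via the symmetrizer), the basis of sorted monomials $e_{i_1}\cdots e_{i_m}$ with $i_1\le\cdots\le i_m$, and the stars-and-bars count $\binom{d+m-1}{m}$ are exactly the argument one finds in the cited sources, and each step you flag (spanning, independence via distinct monomials, the multiset count) is sound. There is nothing to compare against in the paper itself, so your write-up simply supplies the proof the paper omits.
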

  
  \begin{theorem}\label{WELCHBANACH}
(\textbf{Welch bounds for Banach spaces})  	Let $\{\tau_j\}_{j=1}^n$ be a collection in a finite dimensional Banach space $\mathcal{X}$ of dimension $d$  and 	$\{f_j\}_{j=1}^n$ be a collection in  $\mathcal{X}^*.$ Let $n\geq d$ and $m\in \mathbb{N}$. If the  operator 
  $	S_{f, \tau}:\text{Sym}^m(\mathcal{X})\ni x \mapsto S_{f, \tau}x\coloneqq \sum_{j=1}^n
  f_j^{\otimes m}(x)\tau_j ^{\otimes m}\in
 \text{Sym}^m(\mathcal{X})$ is diagonalizable and its eigenvalues are all non negative, then 
  \begin{align}\label{EQUALITYIFFFRAMETIGHT}
 \sum_{1\leq j,k \leq n}f_j(\tau_k)^mf_k(\tau_j)^m &=\sum_{j=1}^n\sum_{k=1}^nf_j(\tau_k)^mf_k(\tau_j)^m
 \geq \frac{1}{{d+m-1 \choose m}}	\left(\sum_{j=1}^n
  f_j(\tau_j)^m\right)^2\nonumber \\
  &=\frac{1}{\text{dim}(\text{Sym}^m(\mathcal{X}))}	\left(\sum_{j=1}^n
  f_j(\tau_j)^m\right)^2.
  \end{align}
  and 
  \begin{align*}
  \max _{1\leq j,k \leq n, j\neq k}|f_j(\tau_k)f_k(\tau_j)|^m&\geq \frac{\frac{1}{{d+m-1 \choose m}}	\left(\sum_{j=1}^n
  	f_j(\tau_j)^m\right)^2-	\sum_{j=1}^n
  	|f_j(\tau_j)|^{2m}}{n^2-n}\\
  &=\frac{\frac{1}{\text{dim}(\text{Sym}^m(\mathcal{X}))}	\left(\sum_{j=1}^n
  	f_j(\tau_j)^m\right)^2-	\sum_{j=1}^n
  	|f_j(\tau_j)|^{2m}}{n^2-n},
  \end{align*}
  \begin{align*}
  \max _{1\leq j,k \leq n, j\neq k}|f_j(\tau_k)|^m&\geq \sqrt{\frac{\frac{1}{{d+m-1 \choose m}}	\left(\sum_{j=1}^n
  		f_j(\tau_j)^m\right)^2-	\sum_{j=1}^n
  		|f_j(\tau_j)|^{2m}}{n^2-n}}\\
  	&=\sqrt{\frac{\frac{1}{\text{dim}(\text{Sym}^m(\mathcal{X}))}	\left(\sum_{j=1}^n
  			f_j(\tau_j)^m\right)^2-	\sum_{j=1}^n
  			|f_j(\tau_j)|^{2m}}{n^2-n}}.
  \end{align*}
  Further, equality holds in Inequality (\ref{EQUALITYIFFFRAMETIGHT}) if and only if $ (\{f_j \}_{j=1}^n, \{\tau_j \}_{j=1}^n) $ is a  tight  ASF for $\text{Sym}^m(\mathcal{X})$. 
  In particular, if $f_j(\tau_j)=1$ for all $1\leq j \leq n$, then 
  \begin{align*}
  \max _{1\leq j,k \leq n, j\neq k}|f_j(\tau_k)f_k(\tau_j)|^m&\geq \frac{n-{d+m-1 \choose m}}{{d+m-1 \choose m}(n-1)}=\frac{n-\text{dim}(\text{Sym}^m(\mathcal{X}))}{\text{dim}(\text{Sym}^m(\mathcal{X}))(n-1)}
  \\
  &=\frac{1}{n-1}\left[\frac{n}{{d+m-1\choose m}}-1\right]
  \end{align*}
  and we have \textbf{ higher order (discrete) Welch bounds for Banach spaces}
  \begin{align*}
 \max _{1\leq j,k \leq n, j\neq k}|f_j(\tau_k)|^m&\geq\sqrt{\frac{n-{d+m-1 \choose m}}{{d+m-1 \choose m}(n-1)}}= \sqrt{\frac{n-\text{dim}(\text{Sym}^m(\mathcal{X}))}{\text{dim}(\text{Sym}^m(\mathcal{X}))(n-1)}}\\
 &=\sqrt{\frac{1}{n-1}\left[\frac{n}{{d+m-1\choose m}}-1\right]}.
  \end{align*}	
  \end{theorem}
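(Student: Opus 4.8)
The plan is to reduce everything to the already established first‑order result, Theorem \ref{FIRSTWELCHBANACH}, applied on the finite dimensional Banach space $\mathcal{Y}\coloneqq \text{Sym}^m(\mathcal{X})$ in place of $\mathcal{X}$. First I would observe that the operator appearing in the hypothesis is precisely the frame operator $S_{g,\sigma}$ on $\mathcal{Y}$ attached to the pair $(\{g_j\}_{j=1}^n,\{\sigma_j\}_{j=1}^n)$, where $g_j\coloneqq f_j^{\otimes m}$ regarded as an element of $\mathcal{Y}^*$ and $\sigma_j\coloneqq \tau_j^{\otimes m}\in\mathcal{Y}$. The elementary fact to record here is the multiplicativity of the tensor pairing, $f^{\otimes m}(\tau^{\otimes m})=(f(\tau))^m$ for $f\in\mathcal{X}^*$, $\tau\in\mathcal{X}$, whence $g_j(\sigma_k)=f_j(\tau_k)^m$ for all $1\leq j,k\leq n$. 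Applying Theorem \ref{DFBS} with $(\mathcal{X},\{f_j\},\{\tau_j\})$ replaced by $(\mathcal{Y},\{g_j\},\{\sigma_j\})$ then gives
\begin{align*}
\operatorname{Tra}(S_{f,\tau})=\sum_{j=1}^n g_j(\sigma_j)=\sum_{j=1}^n f_j(\tau_j)^m,\qquad \operatorname{Tra}(S_{f,\tau}^2)=\sum_{j=1}^n\sum_{k=1}^n g_j(\sigma_k)g_k(\sigma_j)=\sum_{j=1}^n\sum_{k=1}^n f_j(\tau_k)^m f_k(\tau_j)^m.
\end{align*}

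Next, writing $N\coloneqq \dim(\mathcal{Y})={d+m-1\choose m}$ (Theorem \ref{SYMMETRICTENSORDIMENSION}) and letting $\lambda_1,\dots,\lambda_N\geq 0$ be the eigenvalues of the diagonalizable operator $S_{f,\tau}$, I would run the argument of the proof of Theorem \ref{FIRSTWELCHBANACH} verbatim. The Cauchy--Schwarz inequality $\left(\sum_{k=1}^N\lambda_k\right)^2\leq N\sum_{k=1}^N\lambda_k^2$ together with the two trace identities above yields Inequality (\ref{EQUALITYIFFFRAMETIGHT}). Splitting the double sum $\sum_{j,k}f_j(\tau_k)^mf_k(\tau_j)^m$ into its diagonal part $\sum_j f_j(\tau_j)^{2m}$ and its off‑diagonal part, and bounding the latter in absolute value by $(n^2-n)\max_{j\neq k}|f_j(\tau_k)f_k(\tau_j)|^m$, produces the second displayed inequality; and the submultiplicative bound $|f_j(\tau_k)f_k(\tau_j)|^m\leq \left(\max_{j\neq k}|f_j(\tau_k)|^m\right)^2$ produces the third. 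Equality in Inequality (\ref{EQUALITYIFFFRAMETIGHT}) forces equality in Cauchy--Schwarz, i.e.\ $\lambda_1=\cdots=\lambda_N$, i.e.\ $S_{f,\tau}=\lambda I_{\mathcal{Y}}$, which is exactly the assertion that $(\{g_j\},\{\sigma_j\})$ is a tight ASF for $\mathcal{Y}$. Finally, under $f_j(\tau_j)=1$ we have $f_j(\tau_j)^m=1$ and $|f_j(\tau_j)|^{2m}=1$, so $\left(\sum_j f_j(\tau_j)^m\right)^2=n^2$ and $\sum_j|f_j(\tau_j)|^{2m}=n$; substituting these and simplifying $\frac{n^2/N-n}{n^2-n}=\frac{n-N}{N(n-1)}=\frac{1}{n-1}\left[\frac{n}{N}-1\right]$ delivers the stated particular cases.

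I do not expect a genuine obstacle: the content is entirely the reduction to $\text{Sym}^m(\mathcal{X})$ followed by the first‑order machinery. The one place that deserves care is the set‑up itself — confirming that $f^{\otimes m}$ restricts to a well‑defined bounded functional on $\text{Sym}^m(\mathcal{X})$, that the pairing identity $f^{\otimes m}(\tau^{\otimes m})=(f(\tau))^m$ is valid on symmetric tensors, and that the trace formalism underlying Theorem \ref{DFBS} carries over to the finite dimensional Banach space $\text{Sym}^m(\mathcal{X})$ (which it does, $\text{Sym}^m(\mathcal{X})$ being finite dimensional), so that all invocations of the earlier results are legitimate.
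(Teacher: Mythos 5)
Your proposal is correct and follows essentially the same route as the paper: the paper's proof explicitly "does the proof of Theorem \ref{FIRSTWELCHBANACH} for the space $\text{Sym}^m(\mathcal{X})$", using the pairing identity $f_j^{\otimes m}(\tau_k^{\otimes m})=f_j(\tau_k)^m$, the trace formulas on $\text{Sym}^m(\mathcal{X})$, the Cauchy--Schwarz bound on the eigenvalues, and the diagonal/off-diagonal splitting, exactly as you describe. Your closing remarks about verifying that $f^{\otimes m}$ is a well-defined functional on symmetric tensors and that the trace formalism carries over are the only points the paper leaves implicit, and they hold in finite dimensions as you note.
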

  \begin{proof}
  	We will do  the proof of Theorem \ref{FIRSTWELCHBANACH} for the space $\text{Sym}^m(\mathcal{X})$ (we refer \cite{RYAN} for the tensor product of Banach spaces). Let $\lambda_1, \dots, \lambda_{\text{dim}(\text{Sym}^m(\mathcal{X}))}$ be eigenvalues of $	S_{f, \tau}$. Then 
  		\begin{align*}
  \left(\sum_{j=1}^n
  f_j(\tau_j)^m\right)^2	&=\left(\sum_{j=1}^n
  	f_j^{\otimes m}(\tau_j^{\otimes m})\right)^2=	(\operatorname{Tra}(S_{f,\tau}))^2=\left(\sum_{l=1}^{\text{dim}(\text{Sym}^m(\mathcal{X}))}
  	\lambda_l\right)^2\\
  	&\leq 
  	\text{dim}(\text{Sym}^m(\mathcal{X})) \sum_{l=1}^{\text{dim}(\text{Sym}^m(\mathcal{X}))}
  	\lambda_l^2
  	={d+m-1 \choose m}\operatorname{Tra}(S^2_{f,\tau})\\
  	&={d+m-1 \choose m}\sum_{j=1}^n\sum_{l=1}^nf_j^{\otimes m}(\tau_l^{\otimes m})f_l^{\otimes m}(\tau_j^{\otimes m})
  	\\
  	&={d+m-1 \choose m}\sum_{j=1}^n\sum_{k=1}^nf_j(\tau_k)^mf_k(\tau_j)^m
  	\end{align*}
  and 
  
  \begin{align*}
  	\frac{1}{{d+m-1 \choose m}}  \left(\sum_{j=1}^n
  	f_j(\tau_j)^m\right)^2&=\sum_{1\leq j,k \leq n}f_j(\tau_k)^mf_k(\tau_j)^m=\sum_{1\leq j,k \leq n, j\neq k}f_j(\tau_k)^mf_k(\tau_j)^m+\sum_{j=1}^nf_j(\tau_j)^{2m}\\
  	&\leq \sum_{1\leq j,k \leq n, j\neq k}|f_j(\tau_k)f_k(\tau_j)|^m+\sum_{j=1}^n|f_j(\tau_j)|^{2m}\\
  	&\leq (n^2-n) \max _{1\leq j,k \leq n, j\neq k}|f_j(\tau_k)f_k(\tau_j)|^m+\sum_{j=1}^n|f_j(\tau_j)|^{2m}.
  \end{align*}
  Other parts are similar to the corresponding part in the proof of Theorem \ref{FIRSTWELCHBANACH}.
  \end{proof}
\begin{corollary}
	Theorem  \ref{WELCHTHEOREM} is a corollary of Theorem 	\ref{WELCHBANACH}.
\end{corollary}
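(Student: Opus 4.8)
The plan is to apply Theorem \ref{WELCHBANACH} to the Hilbert space $\mathcal{X}=\mathcal{H}=\mathbb{C}^d$ equipped with its canonical functionals. Let $\{\tau_j\}_{j=1}^n$ be a collection of unit vectors in $\mathbb{C}^d$ with $n\geq d$, and define $f_j\colon\mathcal{H}\ni h\mapsto\langle h,\tau_j\rangle\in\mathbb{C}$ for each $1\leq j\leq n$, so that $f_j\in\mathcal{H}^*$. Since $\|\tau_j\|=1$ we get $f_j(\tau_j)=\langle\tau_j,\tau_j\rangle=1$ for all $j$, so the normalization assumed in the final (``in particular'') clause of Theorem \ref{WELCHBANACH} holds automatically.

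Next I would verify the hypothesis on the operator $S_{f,\tau}$ on $\text{Sym}^m(\mathcal{H})$. Equip $\text{Sym}^m(\mathcal{H})$ with the inner product it inherits from $\mathcal{H}^{\otimes m}$, for which $\langle v^{\otimes m},w^{\otimes m}\rangle=\langle v,w\rangle^m$. Then the functional $f_j^{\otimes m}$ on $\text{Sym}^m(\mathcal{H})$ agrees with $\langle\,\cdot\,,\tau_j^{\otimes m}\rangle$ on every simple tensor $v^{\otimes m}$, hence on all of $\text{Sym}^m(\mathcal{H})$ since such tensors span it. Therefore
\[
S_{f,\tau}x=\sum_{j=1}^n f_j^{\otimes m}(x)\,\tau_j^{\otimes m}=\sum_{j=1}^n\langle x,\tau_j^{\otimes m}\rangle\,\tau_j^{\otimes m},\qquad x\in\text{Sym}^m(\mathcal{H}),
\]
which is a positive semidefinite operator on the finite dimensional Hilbert space $\text{Sym}^m(\mathcal{H})$; in particular it is diagonalizable with non negative eigenvalues, so Theorem \ref{WELCHBANACH} is applicable.

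Finally I would translate the conclusions. From $f_j(\tau_k)=\langle\tau_k,\tau_j\rangle$ and $f_k(\tau_j)=\langle\tau_j,\tau_k\rangle=\overline{\langle\tau_k,\tau_j\rangle}$ we obtain $f_j(\tau_k)^m f_k(\tau_j)^m=|\langle\tau_j,\tau_k\rangle|^{2m}$ and $|f_j(\tau_k)|^{2m}=|\langle\tau_j,\tau_k\rangle|^{2m}$. Substituting these into Inequality (\ref{EQUALITYIFFFRAMETIGHT}) and using $\sum_{j=1}^n f_j(\tau_j)^m=n$ gives
\[
\sum_{j=1}^n\sum_{k=1}^n|\langle\tau_j,\tau_k\rangle|^{2m}\geq\frac{n^2}{\binom{d+m-1}{m}},
\]
which is the first assertion of Theorem \ref{WELCHTHEOREM}; the case $m=1$ yields the $n^2/d$ bound. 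Likewise, the final clause of Theorem \ref{WELCHBANACH} gives $\max_{1\leq j,k\leq n,\,j\neq k}|\langle\tau_j,\tau_k\rangle|^{2m}=\max_{1\leq j,k\leq n,\,j\neq k}|f_j(\tau_k)|^{2m}\geq\frac{1}{n-1}\left[\frac{n}{\binom{d+m-1}{m}}-1\right]$, the higher order Welch bounds, and $m=1$ recovers the first order bound $\frac{n-d}{d(n-1)}$.

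The only step demanding genuine care is the positivity of $S_{f,\tau}$ on $\text{Sym}^m(\mathcal{H})$: it relies on the compatibility of the bilinear dual pairing used in Theorem \ref{WELCHBANACH} with the Hilbert space inner product on $\text{Sym}^m(\mathcal{H})$, so that $f_j^{\otimes m}$ is genuinely represented by $\tau_j^{\otimes m}$, together with the elementary fact that any operator of the form $x\mapsto\sum_j\langle x,v_j\rangle v_j$ is positive semidefinite. All remaining steps are direct substitutions into Theorem \ref{WELCHBANACH}.
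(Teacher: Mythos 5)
Your proof is correct and follows essentially the same route as the paper: define $f_j=\langle\cdot,\tau_j\rangle$, note that the induced operator on $\text{Sym}^m(\mathcal{H})$ is $x\mapsto\sum_j\langle x,\tau_j^{\otimes m}\rangle\tau_j^{\otimes m}$, hence positive and so diagonalizable with non negative eigenvalues, and then apply Theorem \ref{WELCHBANACH}. The paper's own proof is just a terser version of this; your added verifications (that $f_j(\tau_j)=1$, that $f_j^{\otimes m}$ is represented by $\tau_j^{\otimes m}$, and that $f_j(\tau_k)^m f_k(\tau_j)^m=|\langle\tau_j,\tau_k\rangle|^{2m}$) are exactly the details the paper leaves implicit.
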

\begin{proof}
Let $\{\tau_j\}_{j=1}^n$ be a finite collection in a finite dimensional Hilbert space $\mathcal{H}$ of dimension $d$. Define $f_j: \mathcal{H}\ni h \mapsto \langle h, \tau_j \rangle \in \mathbb{K}$,  $\forall 1 \leq j \leq n$.  Let $m \in \mathbb{N}$. 	 Then the operator 
\begin{align*}
	S_\tau :\text{Sym}^m(\mathcal{H})\ni h \mapsto \sum_{j=1}^{n}f_j^{\otimes m}(h)\tau_j^{\otimes m} =\sum_{j=1}^{n}\langle h, \tau_j^{\otimes m} \rangle \tau_j^{\otimes m}\in \text{Sym}^m(\mathcal{H})
\end{align*}
is positive and Theorem \ref{WELCHBANACH} applies.
\end{proof}
%Note that, since  $S_{f,\tau}$ is diagonalizable,  we can improve the number $ \text{dim}(\text{Sym}^k(X))$ in Inequalities by rank of $S_{f,\tau}$. An argument for this is that we can run the proof of Theorem by considering only non zero eigenvalues $\lambda_1, \lambda_2, \dots, \lambda_{\text{rank}(S_{f,\tau})} $ of $S_{f,\tau}$.

  Theorem \ref{WELCHBANACH} gives Welch bound for all natural numbers. One can now ask whether we can replace naturals by positive reals. Following results show that we can do this.   For normalized tight frames for Hilbert spaces, these results are  derived in \cite{HAIKINZAMIRGAVISH} and   \cite{EHLEROKOUDJOU}.
  \begin{theorem}\label{DISCRETEARBITRARY1}
 	Let $\{\tau_j\}_{j=1}^n$ be a collection in a finite dimensional Banach space $\mathcal{X}$ of dimension $d$  and 	$\{f_j\}_{j=1}^n$ be a collection in  $\mathcal{X}^*.$ Let $n\geq d$. If the  operator 
 $	S_{f, \tau}:\mathcal{X}\ni x \mapsto S_{f, \tau}x\coloneqq \sum_{j=1}^n
 f_j(x)\tau_j \in
 \mathcal{X}$ is diagonalizable and its eigenvalues are all non negative, then  
 
 \begin{align*}
 \operatorname{Tra}(S^r_{f,\tau})\geq \frac{\left(\sum_{j=1}^n
 	f_j(\tau_j)\right)^r}{d^{r-1}},\quad \forall r \in [1, \infty)
 \end{align*}
and 
 \begin{align*}
	\operatorname{Tra}(S^r_{f,\tau})\leq \frac{\left(\sum_{j=1}^n
		f_j(\tau_j)\right)^r}{d^{r-1}},\quad \forall r \in (0,1).
\end{align*}
 In particular, if $f_j(\tau_j)=1$ for all $1\leq j \leq n$, then 
 \begin{align*}
\operatorname{Tra}(S^r_{f,\tau})\geq \frac{n^r}{d^{r-1}}, \quad \forall r \in [1, \infty)
\end{align*} 	
and 
 \begin{align*}
	\operatorname{Tra}(S^r_{f,\tau})\leq \frac{n^r}{d^{r-1}},\quad \forall r \in (0,1).
\end{align*} 	
 \end{theorem}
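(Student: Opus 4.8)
The plan is to diagonalize $S_{f,\tau}$ and reduce both estimates to the classical power–mean (Jensen) inequality applied to its eigenvalues, exactly in the spirit of the proof of Theorem \ref{FIRSTWELCHBANACH}.

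First I would fix notation: since $S_{f,\tau}$ is diagonalizable with nonnegative eigenvalues, let $\lambda_1,\dots,\lambda_d\geq 0$ be its eigenvalues listed with multiplicity, where $d=\dim(\mathcal{X})$, and pick a basis of $\mathcal{X}$ consisting of eigenvectors. For $r\in(0,\infty)$ the operator $S^r_{f,\tau}$ is defined by functional calculus — it multiplies the $\lambda_k$-eigenvector by $\lambda_k^r$, with the convention $0^r:=0$ — and, the trace on a finite-dimensional space being basis-independent, in the eigenbasis one gets $\operatorname{Tra}(S^r_{f,\tau})=\sum_{k=1}^d\lambda_k^r$. Taking $r=1$ and invoking Theorem \ref{DFBS} gives $\sum_{k=1}^d\lambda_k=\operatorname{Tra}(S_{f,\tau})=\sum_{j=1}^n f_j(\tau_j)$, which identifies the right-hand side of the desired inequalities in terms of the eigenvalues.

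Next comes the analytic core. For $r\in[1,\infty)$ the map $t\mapsto t^r$ is convex on $[0,\infty)$, so Jensen's inequality with uniform weights $1/d$ yields $\bigl(\tfrac1d\sum_{k=1}^d\lambda_k\bigr)^r\leq \tfrac1d\sum_{k=1}^d\lambda_k^r$, i.e. $\operatorname{Tra}(S^r_{f,\tau})=\sum_{k=1}^d\lambda_k^r\geq d^{1-r}\bigl(\sum_{j=1}^n f_j(\tau_j)\bigr)^r$. For $r\in(0,1)$ the map $t\mapsto t^r$ is concave on $[0,\infty)$, so the same computation with the inequality reversed gives $\operatorname{Tra}(S^r_{f,\tau})\leq d^{1-r}\bigl(\sum_{j=1}^n f_j(\tau_j)\bigr)^r$. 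The particular cases follow by substituting $\sum_{j=1}^n f_j(\tau_j)=n$ when $f_j(\tau_j)=1$ for all $j$, and for $r=2$ this recovers Inequality (\ref{GENERALIZEDWELCH}).

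The one genuine point needing care — and hence the main obstacle — is purely definitional: making sense of $S^r_{f,\tau}$ and its trace for non-integer $r$. Once one adopts the convention that, for a diagonalizable operator with nonnegative spectrum, $\operatorname{Tra}(S^r_{f,\tau})$ means the multiplicity-weighted sum $\sum_k\lambda_k^r$ (equivalently, uses the continuous functional calculus on this finite-dimensional diagonalizable operator), the rest is a one-line application of convexity/concavity of $t\mapsto t^r$. I would also remark that the hypothesis $n\geq d$ is not actually used in the inequality chain itself; it is carried along only because it is the natural regime (and the regime in which the tight case — equality, by the analysis of the equality condition in Jensen, forcing all $\lambda_k$ equal — occurs).
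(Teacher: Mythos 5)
Your proposal is correct and follows essentially the same route as the paper: diagonalize $S_{f,\tau}$, identify $\operatorname{Tra}(S^r_{f,\tau})$ with $\sum_{k=1}^d\lambda_k^r$, and apply Jensen's inequality to the convex (respectively concave) map $t\mapsto t^r$ with uniform weights $1/d$, together with Theorem \ref{DFBS} for $\operatorname{Tra}(S_{f,\tau})=\sum_{j=1}^nf_j(\tau_j)$. Your added care about defining $S^r_{f,\tau}$ by functional calculus and your observation that $n\geq d$ is not actually used are reasonable refinements, but the argument is the same.
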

\begin{proof}
	Let $\lambda_1, \dots, \lambda_d$ be eigenvalues of $	S_{f, \tau}$. Let $r \in [1, \infty)$.	Using Jensen's inequality, we have 
	\begin{align*}
	\left(\frac{\sum_{j=1}^n
		f_j(\tau_j)}{d}\right)^r=\left(\frac{\operatorname{Tra}(S_{f,\tau})}{d}\right)^r=\left(\frac{\sum_{k=1}^{d}\lambda_k}{d}\right)^r\leq \frac{\sum_{k=1}^{d}\lambda_k^r}{d}=\frac{1}{d}\operatorname{Tra}(S^r_{f,\tau})
	\end{align*}
	which gives the first part. Second part again follows from Jensen's inequality.
\end{proof}
\begin{theorem}\label{DISCRETEARBITRARY2}
		Let $\{\tau_j\}_{j=1}^n$ be a collection in a finite dimensional Banach space $\mathcal{X}$ of dimension $d$  and 	$\{f_j\}_{j=1}^n$ be a collection in  $\mathcal{X}^*.$  Let $2<p<\infty$. If the  operator 
	$	S_{f, \tau}:\mathcal{X}\ni x \mapsto S_{f, \tau}x\coloneqq \sum_{j=1}^n
	f_j(x)\tau_j \in
	\mathcal{X}$ is diagonalizable and its eigenvalues are all non negative, then  
	\begin{align*}
		 \sum_{1\leq j,k \leq n}|f_j(\tau_k)f_k(\tau_j)|^\frac{p}{2} \geq n(n-1)\left(\frac{n-d}{d(n-1)}\right)^\frac{p}{2}+n.
	\end{align*}
\end{theorem}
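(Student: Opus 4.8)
The plan is to separate the double sum into its $n$ diagonal terms ($j=k$) and its $n(n-1)$ off-diagonal terms ($j\neq k$), to bound the off-diagonal $\ell^1$-mass from below by the same trace estimate used in Theorem~\ref{FIRSTWELCHBANACH}, and then to upgrade that $\ell^1$-bound to the desired $\ell^{p/2}$-bound by a single application of Jensen's inequality (the power--mean inequality). I read the statement with the normalization $f_j(\tau_j)=1$ for all $1\le j\le n$ and the standing hypothesis $n\ge d$ of the preceding theorems in force: the former makes each diagonal term equal to $1$ and produces the additive constant $n$, while the latter keeps the relevant average nonnegative so that the power $p/2$ may legitimately be taken.

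First I would isolate the diagonal contribution: since $|f_j(\tau_j)f_j(\tau_j)|^{p/2}=|f_j(\tau_j)|^{p}=1$, we have $\sum_{j=1}^n|f_j(\tau_j)f_j(\tau_j)|^{p/2}=n$, so it suffices to prove
\begin{align*}
\sum_{1\le j,k\le n,\; j\ne k}|f_j(\tau_k)f_k(\tau_j)|^{p/2}\;\ge\; n(n-1)\left(\frac{n-d}{d(n-1)}\right)^{p/2}.
\end{align*}
Next I would recover, exactly as in the proof of Theorem~\ref{FIRSTWELCHBANACH} (using Theorem~\ref{DFBS} together with the hypothesis that $S_{f,\tau}$ is diagonalizable with nonnegative eigenvalues), the lower bound
\begin{align*}
\sum_{1\le j,k\le n,\; j\ne k}|f_j(\tau_k)f_k(\tau_j)|
&\ge \sum_{1\le j,k\le n,\; j\ne k}f_j(\tau_k)f_k(\tau_j)\\
&=\operatorname{Tra}(S^2_{f,\tau})-\sum_{j=1}^n f_j(\tau_j)^2
\ge\frac{n^2}{d}-n=\frac{n(n-d)}{d},
\end{align*}
where the first inequality is the triangle inequality (legitimate because the middle quantity $\operatorname{Tra}(S^2_{f,\tau})-n$ is real and, by $n\ge d$, nonnegative), and the last inequality is $\operatorname{Tra}(S^2_{f,\tau})\ge\frac1d(\operatorname{Tra}(S_{f,\tau}))^2=\frac{n^2}{d}$, established inside the proof of Theorem~\ref{FIRSTWELCHBANACH}.

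Then I would invoke convexity. Since $2<p<\infty$, the function $t\mapsto t^{p/2}$ is convex and nondecreasing on $[0,\infty)$, so Jensen's inequality over the $n(n-1)$ nonnegative numbers $|f_j(\tau_k)f_k(\tau_j)|$, $j\ne k$, combined with the previous display, gives
\begin{align*}
\frac{1}{n(n-1)}\sum_{1\le j,k\le n,\; j\ne k}|f_j(\tau_k)f_k(\tau_j)|^{p/2}
&\ge\left(\frac{1}{n(n-1)}\sum_{1\le j,k\le n,\; j\ne k}|f_j(\tau_k)f_k(\tau_j)|\right)^{p/2}\\
&\ge\left(\frac{n-d}{d(n-1)}\right)^{p/2}.
\end{align*}
Multiplying by $n(n-1)$ proves the reduced inequality, and adding back the diagonal contribution $n$ yields the claim.

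The argument is short, so there is no single hard step; the point needing care is the bookkeeping that peels off exactly the $n$ diagonal terms and applies Jensen over precisely the remaining $n(n-1)$ terms, since dividing the off-diagonal mass $\frac{n(n-d)}{d}$ by $n(n-1)$ (rather than by $n^2$) is what produces the sharp first-order Welch constant $\frac{n-d}{d(n-1)}$. A secondary point, deserving a remark rather than a separate argument, is that the normalization $f_j(\tau_j)=1$ and $n\ge d$ are genuinely used: without $n\ge d$ the base $\frac{n-d}{d(n-1)}$ can be negative and $\big(\tfrac{n-d}{d(n-1)}\big)^{p/2}$ loses meaning for non-even $p$, while without $f_j(\tau_j)=1$ the additive term $n$ has no source.
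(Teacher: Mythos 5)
Your proof is correct and follows essentially the same route as the paper's: both isolate the $n$ diagonal terms, lower-bound the off-diagonal $\ell^1$-mass by $\frac{n^2}{d}-n$ via Theorem \ref{FIRSTWELCHBANACH}, and then upgrade to the power $p/2$ — your Jensen/power-mean step is exactly the paper's H\"older step (with exponents $p/2$ and its conjugate) in disguise. Your remark that $f_j(\tau_j)=1$ and $n\geq d$ must be read into the statement is apt, since the paper's proof uses both tacitly as well.
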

\begin{proof}
	Define $r\coloneqq 2p/(p-2)$ and $q$ be the conjugate index of $p/2$. Then $q=r/2$. Using Theorem \ref{FIRSTWELCHBANACH} and Holder's inequality, we  have 
\begin{align*}
	\frac{n^2}{d}-n&\leq  \sum_{1\leq j,k \leq n, j\neq k}|f_j(\tau_k)f_k(\tau_j)| \leq \left(\sum_{1\leq j,k \leq n, j\neq k}|f_j(\tau_k)f_k(\tau_j)|^\frac{p}{2}\right)^\frac{2}{p}\left( \sum_{1\leq j,k \leq n, j\neq k} 1\right)^\frac{1}{q}\\
	&=\left(\sum_{1\leq j,k \leq n, j\neq k}|f_j(\tau_k)f_k(\tau_j)|^\frac{p}{2}\right)^\frac{2}{p}(n^2-n)^\frac{1}{q}\\	
	&=\left(\sum_{1\leq j,k \leq n, j\neq k}|f_j(\tau_k)f_k(\tau_j)|^\frac{p}{2}\right)^\frac{2}{p}(n^2-n)^\frac{2}{r}\\	
	&=\left(\sum_{1\leq j,k \leq n, j\neq k}|f_j(\tau_k)f_k(\tau_j)|^\frac{p}{2}\right)^\frac{2}{p}(n^2-n)^\frac{p-2}{p}	
\end{align*}
which gives 
\begin{align*}
	\left(\frac{n^2}{d}-n\right)^\frac{p}{2}\leq \left(\sum_{1\leq j,k \leq n, j\neq k}|f_j(\tau_k)f_k(\tau_j)|^\frac{p}{2}\right)(n^2-n)^{\frac{p}{2}-1}.
\end{align*}
Therefore 

\begin{align*}
	n(n-1)\left(\frac{n-d}{d(n-1)}\right)^\frac{p}{2}+n&=\frac{1}{(n^2-n)^{\frac{p}{2}-1}}	\left(\frac{n^2}{d}-n\right)^\frac{p}{2}+n\\
	&\leq \sum_{1\leq j,k \leq n, j\neq k}|f_j(\tau_k)f_k(\tau_j)|^\frac{p}{2}+\sum_{j=1}^{n}|f_j(\tau_j)f_j(\tau_j)|^\frac{p}{2}\\
&=\sum_{1\leq j,k \leq n}|f_j(\tau_k)f_k(\tau_j)|^\frac{p}{2}.
\end{align*}	
\end{proof}
Some of the proofs of Theorem \ref{WELCHBANACH} (for instance see \cite{ROSENFELD}) use the idea of Gram matrix and Frobenius norm/Hilbert-Schmidt norm. We now give Welch bound for Banach spaces using matrices. First we need a definition.
\begin{definition}\label{GRAMBANACH}
Let $\{\tau_j\}_{j=1}^n$ be a collection in a  Banach space $\mathcal{X}$  and 	$\{f_j\}_{j=1}^n$ be a collection in  $\mathcal{X}^*.$	We define the \textbf{Gram matrix} $G_{f,\tau}$ of  $ (\{f_j \}_{j=1}^n, \{\tau_j \}_{j=1}^n) $ as 
\begin{align*}
G_{f,\tau}\coloneqq [f_j(\tau_k)]_{1\leq j,k \leq n}=\begin{pmatrix}
f_1(\tau_1) & f_1(\tau_2) & \cdots & f_1(\tau_n)\\
f_2(\tau_1) & f_2(\tau_2) & \cdots & f_2(\tau_n)\\
\vdots & \vdots & & \vdots \\
f_n(\tau_1) & f_n(\tau_2) & \cdots & f_n(\tau_n)\\
\end{pmatrix}_{n\times n} 
\in \mathbb{M}_n(\mathbb{K}).
\end{align*}
\end{definition}
In terms of analysis and synthesis operators, $G_{f,\tau}=\theta_f\theta_\tau$. We observe that Definition  \ref{GRAMBANACH}  reduces to the definition of Gram matrix in Hilbert spaces. Indeed, let $\{\tau_j\}_{j=1}^n$ be a collection in a  Hilbert space $\mathcal{H}$. Now define $f_j(h)\coloneqq \langle h, \tau_j\rangle $ for all $h \in \mathcal{H}$, for all $1\leq j \leq n$.
\begin{theorem}\label{GRAMFIRST}
Let $\{\tau_j\}_{j=1}^n$ be a collection in a  Banach space $\mathcal{X}$  and 	$\{f_j\}_{j=1}^n$ be a collection in  $\mathcal{X}^*.$  If the Gram matrix $G_{f,\tau}=[f_j(\tau_k)]_{1\leq j,k \leq n}$ is diagonalizable and its eigenvalues are all non negative, then 
\begin{align*}
\sum_{1\leq j,k \leq n }f_j(\tau_k)f_k(\tau_j)=\sum_{j=1}^n\sum_{k=1}^nf_j(\tau_k)f_k(\tau_j)\geq \frac{1}{\text{rank}(G_{f,\tau})}	\left(\sum_{j=1}^n
f_j(\tau_j)\right)^2.
\end{align*}
and 
\begin{align*}
\max _{1\leq j,k \leq n, j\neq k}|f_j(\tau_k)f_k(\tau_j)|\geq \frac{\frac{1}{\text{rank}(G_{f,\tau})}	\left(\sum_{j=1}^n
	f_j(\tau_j)\right)^2-	\sum_{j=1}^n
	|f_j(\tau_j)|^2}{n^2-n},
\end{align*}
\begin{align*}
\max _{1\leq j,k \leq n, j\neq k}|f_j(\tau_k)|\geq \sqrt{\frac{\frac{1}{\text{rank}(G_{f,\tau})}	\left(\sum_{j=1}^n
		f_j(\tau_j)\right)^2-	\sum_{j=1}^n
		|f_j(\tau_j)|^2}{n^2-n}}.
\end{align*}
%Further, equality holds in Inequality (\ref{GABORGRAM}) if and only if $G_{f,\tau}$ is a scalar multiple of the identity matrix.
In particular, if $f_j(\tau_j)=1$ for all $1\leq j \leq n$, then 
\begin{align*}
\max _{1\leq j,k \leq n, j\neq k}|f_j(\tau_k)f_k(\tau_j)|\geq \frac{n-\text{rank}(G_{f,\tau})}{\text{rank}(G_{f,\tau})(n-1)}
\end{align*}
and 
\begin{align*}
\max _{1\leq j,k \leq n, j\neq k}|f_j(\tau_k)|\geq \sqrt{\frac{n-\text{rank}(G_{f,\tau})}{\text{rank}(G_{f,\tau})(n-1)}}.
\end{align*}	
\end{theorem}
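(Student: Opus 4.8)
The plan is to mirror the argument of Theorem \ref{FIRSTWELCHBANACH}, but to replace the frame operator $S_{f,\tau}$ on $\mathcal{X}$ by the Gram matrix $G_{f,\tau}$ on $\mathbb{K}^n$, and to replace the dimension $d$ by $\text{rank}(G_{f,\tau})$. The key observation is that traces of powers of $S_{f,\tau}$ and of $G_{f,\tau}$ agree in the relevant degrees: from the factorizations $S_{f,\tau}=\theta_\tau\theta_f$ and $G_{f,\tau}=\theta_f\theta_\tau$ (recorded just before the statement) together with the cyclicity of trace, one has $\operatorname{Tra}(G_{f,\tau})=\operatorname{Tra}(\theta_f\theta_\tau)=\operatorname{Tra}(\theta_\tau\theta_f)=\operatorname{Tra}(S_{f,\tau})=\sum_{j=1}^n f_j(\tau_j)$, and similarly $\operatorname{Tra}(G_{f,\tau}^2)=\operatorname{Tra}(\theta_f\theta_\tau\theta_f\theta_\tau)=\operatorname{Tra}(\theta_\tau\theta_f\theta_\tau\theta_f)=\operatorname{Tra}(S_{f,\tau}^2)=\sum_{j,k}f_j(\tau_k)f_k(\tau_j)$ by Theorem \ref{DFBS}. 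Alternatively, one computes $\operatorname{Tra}(G_{f,\tau})$ and $\operatorname{Tra}(G_{f,\tau}^2)$ directly from the entries $[G_{f,\tau}]_{jk}=f_j(\tau_k)$: the diagonal sum gives $\sum_j f_j(\tau_j)$ and the $(j,j)$ entry of $G_{f,\tau}^2$ is $\sum_k f_j(\tau_k)f_k(\tau_j)$, so summing over $j$ gives $\sum_{j,k}f_j(\tau_k)f_k(\tau_j)$.

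First I would list the eigenvalues $\lambda_1,\dots,\lambda_n$ of $G_{f,\tau}$ (counted with multiplicity), noting that by hypothesis they are all $\geq 0$ and that exactly $\text{rank}(G_{f,\tau})$ of them are nonzero. Then, using diagonalizability, $\operatorname{Tra}(G_{f,\tau})=\sum_{\ell=1}^n\lambda_\ell$ and $\operatorname{Tra}(G_{f,\tau}^2)=\sum_{\ell=1}^n\lambda_\ell^2$, and the Cauchy–Schwarz inequality applied only to the nonzero eigenvalues yields
\begin{align*}
\left(\sum_{j=1}^n f_j(\tau_j)\right)^2=\left(\sum_{\ell=1}^n\lambda_\ell\right)^2\leq \text{rank}(G_{f,\tau})\sum_{\ell=1}^n\lambda_\ell^2=\text{rank}(G_{f,\tau})\sum_{j=1}^n\sum_{k=1}^n f_j(\tau_k)f_k(\tau_j),
\end{align*}
which is the first displayed inequality. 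From here the remaining inequalities are obtained exactly as in the proof of Theorem \ref{FIRSTWELCHBANACH}: split the double sum into diagonal and off-diagonal parts, bound $\sum_{j\neq k}f_j(\tau_k)f_k(\tau_j)\leq (n^2-n)\max_{j\neq k}|f_j(\tau_k)f_k(\tau_j)|$ to get the second inequality, and use $|f_j(\tau_k)f_k(\tau_j)|\leq(\max_{j\neq k}|f_j(\tau_k)|)^2$ to pass to the third. The special case $f_j(\tau_j)=1$ substitutes $\sum_j f_j(\tau_j)=n$ and $\sum_j|f_j(\tau_j)|^2=n$ and simplifies $\tfrac{n^2/\text{rank}(G_{f,\tau})-n}{n^2-n}=\tfrac{n-\text{rank}(G_{f,\tau})}{\text{rank}(G_{f,\tau})(n-1)}$.

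The only genuinely new point compared to Theorem \ref{FIRSTWELCHBANACH} — and the step I would be most careful about — is justifying that the Cauchy–Schwarz step uses $\text{rank}(G_{f,\tau})$ rather than $n$; this is where the hypotheses that $G_{f,\tau}$ is diagonalizable with nonnegative eigenvalues are used, since they guarantee that the kernel and image are complementary and that the nonzero eigenvalues are genuinely positive, so that applying Cauchy–Schwarz to the $\text{rank}(G_{f,\tau})$-tuple of nonzero $\lambda_\ell$'s is valid and gives a strictly better constant than $n$ whenever $G_{f,\tau}$ is not of full rank. Everything else is a transcription of the earlier argument with $d$ replaced by $\text{rank}(G_{f,\tau})$; I do not expect any essential obstacle, and I would not reprove the parts identical to Theorem \ref{FIRSTWELCHBANACH}, simply citing that proof for the passage from the trace inequality to the stated maxima.
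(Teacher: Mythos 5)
Your proposal is correct and follows essentially the same route as the paper: list the eigenvalues of $G_{f,\tau}$, use diagonalizability to identify the number of nonzero ones with $\operatorname{rank}(G_{f,\tau})$, apply Cauchy--Schwarz to those to bound $(\operatorname{Tra}(G_{f,\tau}))^2$ by $\operatorname{rank}(G_{f,\tau})\operatorname{Tra}(G_{f,\tau}^2)$, and then split the double sum into diagonal and off-diagonal parts exactly as in Theorem \ref{FIRSTWELCHBANACH}. Your explicit justification of the trace identities (via cyclicity of $\theta_f\theta_\tau$ versus $\theta_\tau\theta_f$, or directly from the entries) and of why only $\operatorname{rank}(G_{f,\tau})$ eigenvalues are nonzero is slightly more careful than the paper's, which simply indexes the eigenvalues up to the rank, but the argument is the same.
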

\begin{proof}
Let $\lambda_1, \dots, \lambda_{(\text{rank}G_{f,\tau})}$ be the  eigenvalues	of $G_{f,\tau}$. Then 

\begin{align*}
  \left(\sum_{j=1}^n
f_j(\tau_j)\right)^2&=	(\operatorname{Trace}(G_{f,\tau}))^2=\left(\sum_{k=1}^{\text{rank}(G_{f,\tau})}
\lambda_k\right)^2\leq {\text{rank}(G_{f,\tau})} \sum_{k=1}^{\text{rank}(G_{f,\tau})}
\lambda_k^2\\
&=\text{rank}(G_{f,\tau})\operatorname{Trace}(G^2_{f,\tau})=\text{rank}(G_{f,\tau})\sum_{j=1}^n\sum_{k=1}^nf_j(\tau_k)f_k(\tau_j)\\
&\leq \text{rank}(G_{f,\tau})\left(\sum_{1\leq j,k \leq n, j\neq k}|f_j(\tau_k)f_k(\tau_j)|+\sum_{j=1}^{n}|f_j(\tau_j)|^2\right)\\
&\leq \text{rank}(G_{f,\tau})\left((n^2-n)\max_{1\leq j,k \leq n, j\neq k}|f_j(\tau_k)f_k(\tau_j)|+\sum_{j=1}^{n}|f_j(\tau_j)|^2\right).
\end{align*}
\end{proof}

In the following result, given a matrix $G$,  $G^{\circ^m}$ denotes the Hadamard/Schur/pointwise product of $G$ with itself, $m$ times \cite{HORNHADAMARD}.
\begin{theorem}
Let $\{\tau_j\}_{j=1}^n$ be a collection in a  Banach space $\mathcal{X}$  and 	$\{f_j\}_{j=1}^n$ be a collection in  $\mathcal{X}^*.$ Let $m\in \mathbb{N}$. If the Hadamard product  $G_{f,\tau}^{\circ^m}$ is diagonalizable and its eigenvalues are all non negative, then 

\begin{align*}
	\sum_{1\leq j,k \leq n }f_j(\tau_k)^mf_k(\tau_j)^m=\sum_{j=1}^n\sum_{k=1}^nf_j(\tau_k)^mf_k(\tau_j)^m\geq \frac{1}{\text{rank}(G_{f,\tau}^{\circ^m})}	\left(\sum_{j=1}^n
	f_j(\tau_j)^m\right)^2.
\end{align*}
and 
\begin{align*}
	\max _{1\leq j,k \leq n, j\neq k}|f_j(\tau_k)f_k(\tau_j)|^m\geq \frac{\frac{1}{\text{rank}(G_{f,\tau}^{\circ^m})}	\left(\sum_{j=1}^n
		f_j(\tau_j)^m\right)^2-	\sum_{j=1}^n
		|f_j(\tau_j)|^{2m}}{n^2-n},
\end{align*}
\begin{align*}
	\max _{1\leq j,k \leq n, j\neq k}|f_j(\tau_k)|^m\geq \sqrt{\frac{\frac{1}{\text{rank}(G_{f,\tau}^{\circ^m})}	\left(\sum_{j=1}^n
			f_j(\tau_j)^m\right)^2-	\sum_{j=1}^n
			|f_j(\tau_j)|^{2m}}{n^2-n}}.
\end{align*}
%Further, equality holds in Inequality (\ref{GABORGRAM}) if and only if $G_{f,\tau}$ is a scalar multiple of the identity matrix.
In particular, if $f_j(\tau_j)=1$ for all $1\leq j \leq n$, then 
\begin{align*}
	\max _{1\leq j,k \leq n, j\neq k}|f_j(\tau_k)f_k(\tau_j)|^m\geq \frac{n-\text{rank}(G_{f,\tau}^{\circ^m})}{\text{rank}(G_{f,\tau}^{\circ^m})(n-1)}
\end{align*}
and 
\begin{align*}
	\max _{1\leq j,k \leq n, j\neq k}|f_j(\tau_k)|^m\geq \sqrt{\frac{n-\text{rank}(G_{f,\tau}^{\circ^m})}{\text{rank}(G_{f,\tau}^{\circ^m})(n-1)}}.
\end{align*}		
\end{theorem}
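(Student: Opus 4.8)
The plan is to repeat, almost word for word, the proof of Theorem~\ref{GRAMFIRST}, but with the Gram matrix $G_{f,\tau}$ replaced throughout by its $m$-th Hadamard power $G_{f,\tau}^{\circ^m}$, whose $(j,k)$ entry is $f_j(\tau_k)^m$. The only genuinely new computation is the bookkeeping of traces. From $(G_{f,\tau}^{\circ^m})_{jk}=f_j(\tau_k)^m$ one reads off
\begin{align*}
\operatorname{Trace}\big(G_{f,\tau}^{\circ^m}\big)=\sum_{j=1}^n f_j(\tau_j)^m,
\end{align*}
and, since $\big((G_{f,\tau}^{\circ^m})^2\big)_{jj}=\sum_{k=1}^n (G_{f,\tau}^{\circ^m})_{jk}(G_{f,\tau}^{\circ^m})_{kj}=\sum_{k=1}^n f_j(\tau_k)^m f_k(\tau_j)^m$, also
\begin{align*}
\operatorname{Trace}\big((G_{f,\tau}^{\circ^m})^2\big)=\sum_{j=1}^n\sum_{k=1}^n f_j(\tau_k)^m f_k(\tau_j)^m.
\end{align*}

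Next, let $\lambda_1,\dots,\lambda_{\text{rank}(G_{f,\tau}^{\circ^m})}$ be the nonzero eigenvalues of $G_{f,\tau}^{\circ^m}$; diagonalizability guarantees that their number, counted with multiplicity, is exactly $\text{rank}(G_{f,\tau}^{\circ^m})$, and by hypothesis each of them is nonnegative. The Cauchy--Schwarz inequality $\big(\sum_l\lambda_l\big)^2\leq \text{rank}(G_{f,\tau}^{\circ^m})\sum_l\lambda_l^2$, read through the two trace identities above, gives
\begin{align*}
\Big(\sum_{j=1}^n f_j(\tau_j)^m\Big)^2\leq \text{rank}(G_{f,\tau}^{\circ^m})\sum_{j=1}^n\sum_{k=1}^n f_j(\tau_k)^m f_k(\tau_j)^m,
\end{align*}
which is the first displayed inequality of the statement. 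Splitting the double sum into the diagonal contribution $\sum_j f_j(\tau_j)^{2m}$ and the off-diagonal one, and estimating
\begin{align*}
\sum_{1\leq j,k\leq n,\,j\neq k} f_j(\tau_k)^m f_k(\tau_j)^m\leq \sum_{1\leq j,k\leq n,\,j\neq k}|f_j(\tau_k)f_k(\tau_j)|^m\leq (n^2-n)\max_{1\leq j,k\leq n,\,j\neq k}|f_j(\tau_k)f_k(\tau_j)|^m,
\end{align*}
then rearranging, yields the second inequality. For the third, the elementary bound $|f_j(\tau_k)f_k(\tau_j)|^m\leq\big(\max_{j\neq k}|f_j(\tau_k)|^m\big)^2$, obtained by relabelling indices in the identity $\max|f_k(\tau_j)|=\max|f_j(\tau_k)|$, lets me take square roots in the second inequality. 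The \emph{in particular} statements follow by setting $f_j(\tau_j)=1$ for every $j$, so that $\sum_j f_j(\tau_j)^m=n$ and $\sum_j|f_j(\tau_j)|^{2m}=n$, and simplifying $\frac{n^2/\text{rank}(G_{f,\tau}^{\circ^m})-n}{n^2-n}=\frac{n-\text{rank}(G_{f,\tau}^{\circ^m})}{\text{rank}(G_{f,\tau}^{\circ^m})(n-1)}$.

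There is essentially no obstacle: the argument is structurally identical to that of Theorem~\ref{GRAMFIRST}. The one step deserving attention is the passage \emph{number of nonzero eigenvalues $=$ rank}, which uses diagonalizability in an essential way (it fails, e.g., for a nonzero nilpotent matrix) and is precisely why that hypothesis is imposed. One could alternatively note that $G_{f,\tau}^{\circ^m}$ is literally the Gram matrix $G_{f^{\otimes m},\tau^{\otimes m}}$ of the pair $(\{f_j^{\otimes m}\}_{j=1}^n,\{\tau_j^{\otimes m}\}_{j=1}^n)$ in $\text{Sym}^m(\mathcal{X})$, since $f_j^{\otimes m}(\tau_k^{\otimes m})=f_j(\tau_k)^m$, and apply Theorem~\ref{GRAMFIRST} directly; but the bound stated here, in terms of $\text{rank}(G_{f,\tau}^{\circ^m})$, is sharper than the one arising from $\dim(\text{Sym}^m(\mathcal{X}))={d+m-1\choose m}$.
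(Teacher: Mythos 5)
Your proposal is correct and follows essentially the same route as the paper: the paper's own proof simply notes that $G_{f,\tau}^{\circ^m}=[f_j^{\otimes m}(\tau_k^{\otimes m})]_{1\leq j,k\leq n}$ and then repeats the eigenvalue Cauchy--Schwarz argument of Theorem~\ref{GRAMFIRST}, which is exactly what you do. Your explicit remark that diagonalizability is what guarantees ``number of nonzero eigenvalues equals rank'' is a welcome clarification of a step the paper leaves implicit.
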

\begin{proof}
	We note that 
	\begin{align*}
	G_{f,\tau}^{\circ^m}=[(f_j(\tau_k))^m]_{1\leq j,k \leq n}=[(f_j^{\otimes m}(\tau^{\otimes m}_k))]_{1\leq j,k \leq n}.
	\end{align*}
Now the proof is similar to the proof of Theorem \ref{GRAMFIRST}.
\end{proof}

We next recall the definition of Gerzon's bound which allows us to recall  the bounds which are in the same way  to discrete Welch bounds in the Hilbert spaces. 
 \begin{definition}\cite{JASPERKINGMIXON}
	Given $d\in \mathbb{N}$, define \textbf{Gerzon's bound}
	\begin{align*}
		\mathcal{Z}(d, \mathbb{K})\coloneqq 
		\left\{ \begin{array}{cc} 
			d^2 & \quad \text{if} \quad \mathbb{K} =\mathbb{C}\\
			\frac{d(d+1)}{2} & \quad \text{if} \quad \mathbb{K} =\mathbb{R}.\\
		\end{array} \right.
	\end{align*}	
\end{definition}
\begin{theorem}\cite{JASPERKINGMIXON, XIACORRECTION, MUKKAVILLISABHAWALERKIPAAZHANG, SOLTANALIAN, BUKHCOX, CONWAYHARDINSLOANE, HAASHAMMENMIXON, RANKIN}  \label{LEVENSTEINBOUND}
	Define $m\coloneqq \operatorname{dim}_{\mathbb{R}}(\mathbb{K})/2$.	If	$\{\tau_j\}_{j=1}^n$  is any collection of  unit vectors in $\mathbb{K}^d$, then
	\begin{enumerate}[\upshape(i)]
		\item (\textbf{Bukh-Cox bound})
		\begin{align*}
			\max _{1\leq j,k \leq n, j\neq k}|\langle \tau_j, \tau_k\rangle |\geq \frac{\mathcal{Z}(n-d, \mathbb{K})}{n(1+m(n-d-1)\sqrt{m^{-1}+n-d})-\mathcal{Z}(n-d, \mathbb{K})}\quad \text{if} \quad n>d.
		\end{align*}
		\item (\textbf{Orthoplex/Rankin bound})	
		\begin{align*}
			\max _{1\leq j,k \leq n, j\neq k}|\langle \tau_j, \tau_k\rangle |\geq\frac{1}{\sqrt{d}} \quad \text{if} \quad n>\mathcal{Z}(d, \mathbb{K}).
		\end{align*}
		\item (\textbf{Levenstein bound})	
		\begin{align*}
			\max _{1\leq j,k \leq n, j\neq k}|\langle \tau_j, \tau_k\rangle |\geq \sqrt{\frac{n(m+1)-d(md+1)}{(n-d)(md+1)}} \quad \text{if} \quad n>\mathcal{Z}(d, \mathbb{K}).
		\end{align*}
		\item (\textbf{Exponential bound})
		\begin{align*}
			\max _{1\leq j,k \leq n, j\neq k}|\langle \tau_j, \tau_k\rangle |\geq 1-2n^{\frac{-1}{d-1}}.
		\end{align*}
	\end{enumerate}	
\end{theorem}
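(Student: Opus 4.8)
Since Theorem \ref{LEVENSTEINBOUND} collects four classical estimates drawn from the cited literature, the plan is to treat each in turn, with the linear‑programming method of Delsarte–Goethals–Seidel serving as a common backbone for parts (ii) and (iii), and dimension counting and a packing argument handling the remaining two.

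For the Orthoplex/Rankin bound (ii), I would pass from the unit vectors $\tau_j$ to the rank‑one projections $P_j\coloneqq \tau_j\tau_j^*$, viewed in the real vector space $\mathcal{S}$ of self‑adjoint operators on $\mathbb{K}^d$, which has $\dim_{\mathbb{R}}\mathcal{S}=\mathcal{Z}(d,\mathbb{K})$, with Hilbert–Schmidt inner product $\langle P_j,P_k\rangle_{HS}=|\langle\tau_j,\tau_k\rangle|^2$. Setting $Q_j\coloneqq P_j-\tfrac1d I$ places the $Q_j$ in the $(\mathcal{Z}(d,\mathbb{K})-1)$‑dimensional traceless subspace, with $\|Q_j\|_{HS}^2=1-\tfrac1d>0$ and $\langle Q_j,Q_k\rangle_{HS}=|\langle\tau_j,\tau_k\rangle|^2-\tfrac1d$. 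The elementary fact that a family of nonzero pairwise non‑acute vectors in $\mathbb{R}^N$ has at most $N+1$ members then shows that $\max_{j\neq k}|\langle\tau_j,\tau_k\rangle|^2<\tfrac1d$ would force $n\le\mathcal{Z}(d,\mathbb{K})$, contradicting the hypothesis $n>\mathcal{Z}(d,\mathbb{K})$; hence $\max_{j\neq k}|\langle\tau_j,\tau_k\rangle|\ge 1/\sqrt{d}$.

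For the Levenstein bound (iii), I would run a second‑moment/LP argument on $\mathbb{K}\mathbb{P}^{d-1}$: choose a quadratic test polynomial assembled from the first two Gegenbauer/Jacobi polynomials adapted to $\mathbb{K}\mathbb{P}^{d-1}$ of the form $F(t)=(t-t_0)(t-1)$ with $t_0$ tuned so that $F$ has nonnegative Gegenbauer coefficients while $F(1)=0$. Evaluating $\sum_{j,k}F(|\langle\tau_j,\tau_k\rangle|^2)$ and using the addition formula to lower‑bound it by $n^2F_0\ge 0$ (the constant coefficient), while upper‑bounding the off‑diagonal contribution by replacing each term by its extremal value, isolates $\max_{j\neq k}|\langle\tau_j,\tau_k\rangle|^2$; carrying out the arithmetic with the Jacobi normalisations (parametrised by $m=\dim_{\mathbb{R}}(\mathbb{K})/2$) yields the stated $\tfrac{n(m+1)-d(md+1)}{(n-d)(md+1)}$, which under $n>\mathcal{Z}(d,\mathbb{K})$ exceeds $1/d$ and thus sharpens (ii). The exponential bound (iv) is then a packing estimate: if $c\coloneqq\max_{j\neq k}|\langle\tau_j,\tau_k\rangle|$, the geodesic caps around the $\tau_j$ on the unit sphere of $\mathbb{K}^d$ are disjoint, so $n$ times the normalised cap volume is at most $1$; bounding the cap volume below by a power $(\cdot)^{d-1}$ and solving for $c$ gives $c\ge 1-2n^{-1/(d-1)}$.

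The genuinely delicate ingredient — and where I expect the main obstacle — is the Bukh–Cox bound (i). The route I would follow is the Naimark‑complement argument of Bukh and Cox: since $n>d$, reduce the coherence problem for $\{\tau_j\}_{j=1}^n$ in $\mathbb{K}^d$ to a coherence problem for the complementary configuration in $\mathbb{K}^{\,n-d}$, feed this into a $p$‑frame‑potential estimate, and combine it with the triangle inequality to extract the rational function of $n$, $d$, $m$, and $\mathcal{Z}(n-d,\mathbb{K})$ displayed in the statement. Making the complementation step rigorous beyond the tight‑frame case, and faithfully tracking the constant $1+m(n-d-1)\sqrt{m^{-1}+n-d}$, is the subtle point; for the purposes of the present paper this bound, like the other three, is quoted from the cited references rather than reproved here.
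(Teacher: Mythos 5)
The paper does not prove Theorem \ref{LEVENSTEINBOUND} at all: it is stated purely as a quotation from the cited literature, serving only to motivate the open question that follows it, so there is no in-paper argument to compare your proposal against. Judged on its own terms, your sketch correctly identifies the standard routes from those references, and part (ii) is essentially a complete proof: the passage to the traceless parts $Q_j = \tau_j\tau_j^* - \tfrac{1}{d}I$ of the rank-one projections, living in a real space of dimension $\mathcal{Z}(d,\mathbb{K})-1$, is exactly the orthoplex argument. One small correction there: the lemma you want is that at most $N+1$ nonzero vectors in $\mathbb{R}^N$ can have pairwise \emph{strictly negative} inner products; for pairwise \emph{non-positive} (``non-acute'') inner products the correct bound is $2N$. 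Since you apply it under the strict hypothesis $\max_{j\neq k}|\langle\tau_j,\tau_k\rangle|^2 < \tfrac{1}{d}$, the argument goes through, but the lemma as you state it is not the one you use.

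The remaining parts are outlines rather than proofs. For (iii) you name the right machinery (a quadratic Jacobi/Gegenbauer test function with nonnegative expansion coefficients, parametrised by $m=\dim_{\mathbb{R}}(\mathbb{K})/2$), but the choice of $t_0$, the verification of positivity of the coefficients, and the arithmetic producing $\tfrac{n(m+1)-d(md+1)}{(n-d)(md+1)}$ are precisely the content of Levenstein's bound and are not carried out. For (iv) the cap-volume exponent needs care (the relevant projective space has real dimension $m\cdot 2(d-1)$, not $d-1$, so matching the stated exponent $\tfrac{-1}{d-1}$ requires the actual volume estimate, not just ``a power''). For (i) you candidly defer the Naimark-complementation step and the constant $1+m(n-d-1)\sqrt{m^{-1}+n-d}$ to Bukh--Cox. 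Since the paper itself only cites these results, deferring to the references is consistent with the paper's treatment; but as a standalone proof the proposal establishes only part (ii).
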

Theorem \ref{LEVENSTEINBOUND}  and Theorem \ref{FIRSTWELCHBANACH}  give  the following problem to us.
\begin{question}
	\textbf{ Whether there is a   version of Theorem \ref{LEVENSTEINBOUND} for Banach spaces?. In particular, does there exists a  (discrete) version of }
	\begin{enumerate}[\upshape(i)]
		\item  \textbf{ Bukh-Cox bound for Banach spaces?}
		\item  \textbf{ Orthoplex/Rankin bound for Banach spaces?}
		\item  \textbf{ Levenstein bound for Banach spaces?}
		\item  \textbf{ Exponential bound for Banach spaces?}
	\end{enumerate}		
\end{question}

\section{Applications of discrete Welch bounds for Banach spaces}\label{SECTIONTWO}
We begin by defining the RMS of vectors and functionals in Banach spaces.
 \begin{definition}\label{DISCRETERMS}
	Let $\{\tau_j\}_{j=1}^n$ be a collection in a finite dimensional Banach space $\mathcal{X}$ of dimension $d$  and 	$\{f_j\}_{j=1}^n$ be a collection in  $\mathcal{X}^*$ satisfying $f_j(\tau_j)=1$ for all $1\leq j \leq n$. Assume that  the  frame operator 
	$	S_{f, \tau}$  is diagonalizable and its eigenvalues are all non negative. We define the \textbf{root-mean-square (RMS) absolute cross relation} of    $ (\{f_j \}_{j=1}^n, \{\tau_j \}_{j=1}^n) $ as 
	\begin{align*}
		I_{\text{RMS}} (\{f_j \}_{j=1}^n, \{\tau_j \}_{j=1}^n)\coloneqq \left(\frac{1}{n(n-1)}\sum _{1\leq j,k \leq n, j\neq k} f_j(\tau_k)f_k(\tau_j)\right)^\frac{1}{2}.
	\end{align*}
\end{definition}
Theorem \ref{FIRSTWELCHBANACH}  gives the following result.
\begin{theorem}
	Let $\{\tau_j\}_{j=1}^n$ and $\{f_j\}_{j=1}^n$ be as in Definition \ref{DISCRETERMS}. Then
	\begin{align*}
	\max _{1\leq j,k \leq n, j\neq k}|f_j(\tau_k)| \geq 	I_{\text{RMS}} (\{f_j \}_{j=1}^n, \{\tau_j \}_{j=1}^n)\geq \left(\frac{n-d}{d(n-1)}\right)^\frac{1}{2}.
	\end{align*}
\end{theorem}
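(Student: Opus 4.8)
The plan is to derive this as a direct corollary of Theorem~\ref{FIRSTWELCHBANACH}. The quantity $I_{\text{RMS}}$ is, up to the square root, the average of the off-diagonal terms $f_j(\tau_k)f_k(\tau_j)$; since the diagonal terms $f_j(\tau_j)f_j(\tau_j) = 1$ contribute exactly $n$ to the double sum $\sum_{1\le j,k\le n} f_j(\tau_k)f_k(\tau_j)$, Inequality~(\ref{GENERALIZEDWELCH}) with the normalization $f_j(\tau_j)=1$ reads $\sum_{1\le j,k\le n} f_j(\tau_k)f_k(\tau_j) \ge n^2/d$, hence $\sum_{1\le j,k\le n,\, j\neq k} f_j(\tau_k)f_k(\tau_j) \ge n^2/d - n$. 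Dividing by $n(n-1)$ gives
\begin{align*}
I_{\text{RMS}}(\{f_j\}_{j=1}^n, \{\tau_j\}_{j=1}^n)^2 = \frac{1}{n(n-1)}\sum_{1\le j,k\le n,\, j\neq k} f_j(\tau_k)f_k(\tau_j) \geq \frac{n^2/d - n}{n(n-1)} = \frac{n-d}{d(n-1)},
\end{align*}
and taking square roots yields the lower bound $I_{\text{RMS}} \ge \left(\frac{n-d}{d(n-1)}\right)^{1/2}$. Implicit here is that the off-diagonal sum is a nonnegative real (so that the square root in the definition of $I_{\text{RMS}}$ makes sense); this follows because the full double sum equals $\operatorname{Tra}(S_{f,\tau}^2) = \sum_k \lambda_k^2 \ge 0$ by diagonalizability and nonnegativity of the eigenvalues, and subtracting the real quantity $n$ leaves a real number which the displayed inequality shows is $\ge n^2/d - n \ge 0$ since $n \ge d$.

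For the upper bound, I would use the elementary estimate already established inside the proof of Theorem~\ref{FIRSTWELCHBANACH}: for each pair $j\neq k$,
\begin{align*}
|f_j(\tau_k)f_k(\tau_j)| \le \left(\max_{1\le j,k\le n,\, j\neq k}|f_j(\tau_k)|\right)^2.
\end{align*}
Summing over the $n(n-1)$ off-diagonal pairs and dividing by $n(n-1)$ gives
\begin{align*}
\frac{1}{n(n-1)}\sum_{1\le j,k\le n,\, j\neq k} f_j(\tau_k)f_k(\tau_j) \le \frac{1}{n(n-1)}\sum_{1\le j,k\le n,\, j\neq k} |f_j(\tau_k)f_k(\tau_j)| \le \left(\max_{1\le j,k\le n,\, j\neq k}|f_j(\tau_k)|\right)^2,
\end{align*}
and taking square roots yields $I_{\text{RMS}} \le \max_{1\le j,k\le n,\, j\neq k}|f_j(\tau_k)|$, which chains together with the lower bound to give the full statement.

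I do not anticipate a genuine obstacle here — the result is a repackaging of Theorem~\ref{FIRSTWELCHBANACH} in the language of Definition~\ref{DISCRETERMS}. The only point requiring a word of care is the well-definedness of the square root in $I_{\text{RMS}}$, i.e.\ that the averaged off-diagonal sum is a nonnegative real; this is exactly what the lower-bound computation certifies, so the two parts of the proof reinforce each other. If one wanted to be fully rigorous about the reality of the sum before invoking the inequality, one notes that $\sum_{1\le j,k\le n} f_j(\tau_k)f_k(\tau_j) = \operatorname{Tra}(S_{f,\tau}^2)$ by Theorem~\ref{DFBS}, which is $\sum_k \lambda_k^2$ under the diagonalizability hypothesis and hence a nonnegative real, and the diagonal part equals $n$.
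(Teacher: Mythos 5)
Your proposal is correct and follows essentially the same route as the paper, which states this result as an immediate consequence of Theorem \ref{FIRSTWELCHBANACH} without writing out a separate proof: the lower bound comes from Inequality (\ref{GENERALIZEDWELCH}) with $f_j(\tau_j)=1$ after removing the diagonal contribution $n$ and dividing by $n(n-1)$, and the upper bound from the estimate $|f_j(\tau_k)f_k(\tau_j)|\leq\bigl(\max_{1\leq j,k\leq n,\ j\neq k}|f_j(\tau_k)|\bigr)^{2}$ already used inside that theorem's proof. Your additional remark that the off-diagonal sum is a nonnegative real (via $\operatorname{Tra}(S_{f,\tau}^{2})=\sum_{k}\lambda_{k}^{2}$ and $n\geq d$), so the square root in Definition \ref{DISCRETERMS} is well defined, is a sensible point of care that the paper leaves implicit.
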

Here is another notion similar to that of frame potential in Hilbert spaces.
 \begin{definition}\label{DISCRETEPOTENTIAL}
	Let $\{\tau_j\}_{j=1}^n$ be a collection in a finite dimensional Banach space $\mathcal{X}$ of dimension $d$  and 	$\{f_j\}_{j=1}^n$ be a collection in  $\mathcal{X}^*$ satisfying $f_j(\tau_j)=1$ for all $1\leq j \leq n$. Assume that  the  frame operator 
	$	S_{f, \tau}$  is diagonalizable and its eigenvalues are all non negative. We define the \textbf{pseudo frame potential} of $ (\{f_j \}_{j=1}^n, \{\tau_j \}_{j=1}^n) $ as 
	\begin{align*}
		PFP (\{f_j \}_{j=1}^n, \{\tau_j \}_{j=1}^n)\coloneqq \sum_{j=1}^n\sum_{k=1}^n f_j(\tau_k)f_k(\tau_j).
	\end{align*}
\end{definition}
Note that we defined the notion pseudo frame potential and not frame potential. The reason is that frame potential for Banach spaces can not be defined in the way in Definition \ref{DISCRETEPOTENTIAL}. One has to go to the theory of p-summing operators (see \cite{DIESTELJARCHOWTONGE, NICOLE})  to define frame potential in Banach spaces, see \cite{FREEMANKORNELSON}. Theorem  \ref{FIRSTWELCHBANACH}  again gives the following result.
\begin{theorem}\label{PFPTHEOREM}
	Let $\{\tau_j\}_{j=1}^n$ and $\{f_j\}_{j=1}^n$ be as in Definition \ref{DISCRETEPOTENTIAL}. Then 	
	\begin{align}\label{PFPINENQUALITY}
n^2\max _{1\leq j,k \leq n}|f_j(\tau_k)|^2\geq 	PFP (\{f_j \}_{j=1}^n, \{\tau_j \}_{j=1}^n)	\geq \frac{n^2}{{d}}.	
	\end{align}
\end{theorem}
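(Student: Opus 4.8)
The plan is to read both inequalities in \eqref{PFPINENQUALITY} off results already established, with only a small reality observation needed to make the upper estimate rigorous when $\mathbb{K}=\mathbb{C}$.

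For the lower bound $PFP(\{f_j\}_{j=1}^n,\{\tau_j\}_{j=1}^n)\geq n^2/d$, I would apply Theorem~\ref{FIRSTWELCHBANACH} directly. By the assumptions in Definition~\ref{DISCRETEPOTENTIAL}, the pair $(\{f_j\}_{j=1}^n,\{\tau_j\}_{j=1}^n)$ satisfies the hypotheses of that theorem, and moreover $f_j(\tau_j)=1$ for all $1\leq j\leq n$. Hence Inequality~\eqref{GENERALIZEDWELCH} holds; its left-hand side $\sum_{j=1}^n\sum_{k=1}^n f_j(\tau_k)f_k(\tau_j)$ is exactly $PFP(\{f_j\}_{j=1}^n,\{\tau_j\}_{j=1}^n)$ by definition, while its right-hand side equals $\frac1d\big(\sum_{j=1}^n f_j(\tau_j)\big)^2=\frac{n^2}{d}$.

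For the upper bound, I would first record that $PFP(\{f_j\}_{j=1}^n,\{\tau_j\}_{j=1}^n)$ is a nonnegative real number: by Theorem~\ref{DFBS} it equals $\operatorname{Tra}(S_{f,\tau}^2)$, and since $S_{f,\tau}$ is diagonalizable with nonnegative eigenvalues $\lambda_1,\dots,\lambda_d$, this trace is $\sum_{k=1}^d\lambda_k^2\geq 0$. Being real, $PFP=\sum_{1\leq j,k\leq n}\operatorname{Re}\!\big(f_j(\tau_k)f_k(\tau_j)\big)\leq\sum_{1\leq j,k\leq n}|f_j(\tau_k)|\,|f_k(\tau_j)|$; bounding each of the two factors by $\max_{1\leq j,k\leq n}|f_j(\tau_k)|$ and summing over the $n^2$ ordered pairs $(j,k)$ gives $PFP\leq n^2\big(\max_{1\leq j,k\leq n}|f_j(\tau_k)|\big)^2$.

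I do not expect any genuine obstacle here: the statement is essentially a corollary of Theorem~\ref{FIRSTWELCHBANACH} combined with the triangle inequality. The only step that needs care, and the only place where the spectral hypothesis enters the upper bound, is the reality of $PFP$, which is what licenses replacing $f_j(\tau_k)f_k(\tau_j)$ by $|f_j(\tau_k)|\,|f_k(\tau_j)|$ in the complex case; over $\mathbb{R}$ this is automatic.
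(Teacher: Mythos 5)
Your proposal is correct and matches the paper's (implicit) argument: the paper states this theorem without a written proof, asserting it follows from Theorem \ref{FIRSTWELCHBANACH}, and your lower bound is exactly Inequality (\ref{GENERALIZEDWELCH}) specialized to $f_j(\tau_j)=1$, while the upper bound is the obvious triangle-inequality estimate. Your extra remark that $PFP=\operatorname{Tra}(S_{f,\tau}^2)=\sum_{k=1}^d\lambda_k^2$ is a nonnegative real number is a worthwhile bit of care that the paper glosses over, since it is what makes the chain of inequalities in (\ref{PFPINENQUALITY}) meaningful over $\mathbb{C}$.
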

Theorem \ref{PFPTHEOREM}  gives the following problem.
 \begin{question}
	\textbf{Is  there  a characterization of $ (\{f_j \}_{j=1}^n, \{\tau_j \}_{j=1}^n) $ by using equality in Inequality (\ref{PFPINENQUALITY})?}
\end{question}
 We next introduce the notions of Grassmannian frames and equiangular frames for Banach spaes. First we need a definition.
  \begin{definition}
  	Let  $ (\{f_j \}_{j=1}^n, \{\tau_j \}_{j=1}^n) $ be an ASF  for $\mathcal{X}$ satisfying $f_j(\tau_j)=1$, $\forall 1\leq j \leq n$. 	Assume that  the  frame operator 
  	$	S_{f, \tau}$  is diagonalizable and its eigenvalues are all non negative. We define the  \textbf{frame correlation} of $ (\{f_j \}_{j=1}^n, \{\tau_j \}_{j=1}^n) $ as 
  	\begin{align*}
  		\mathcal{M}(\{f_j \}_{j=1}^n, \{\tau_j \}_{j=1}^n)\coloneqq \max _{1\leq j,k \leq n, j\neq k}|f_j(\tau_k)|.
  	\end{align*}
  \end{definition}
 
  \begin{definition}\label{GRASSMANNIANDEFINITION}
 	Let  $ (\{f_j \}_{j=1}^n, \{\tau_j \}_{j=1}^n) $ be an ASF  for $\mathcal{X}$ satisfying $\|f_j\|= 1, \|\tau_j\|=1, f_j(\tau_j)=1$, $\forall 1\leq j \leq n$. 	Assume that  the  frame operator 
 $	S_{f, \tau}$  is diagonalizable and its eigenvalues are all non negative.  ASF $ (\{f_j \}_{j=1}^n, \{\tau_j \}_{j=1}^n) $   is said to be a \textbf{Grassmannian frame} for  $\mathcal{X}$ if 
  	\begin{align*}
  		\mathcal{M}(\{f_j \}_{j=1}^n, \{\tau_j \}_{j=1}^n)=\inf\bigg\{&\mathcal{M}(\{g_j \}_{j=1}^n, \{\omega_j \}_{j=1}^n): (\{g_j \}_{j=1}^n, \{\omega_j \}_{j=1}^n) \text{ is an ASF for }\mathcal{X}  
  		  \text{satisfying } \\
  		  &\|g_j\|=1, \|\omega_j\|= 1, g_j(\omega_j)=1, \forall 1\leq j \leq n \text{ and the frame operator }  S_{g,\omega}\\
  		& \text{is diagonalizable and its eigenvalues are all non negative}\bigg\}.	
  	\end{align*}		
  \end{definition}
\begin{theorem}
Grassmannian frames exist in every dimension for every Banach space.
\end{theorem}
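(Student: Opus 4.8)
The plan is a compactness argument: realize the infimum in Definition \ref{GRASSMANNIANDEFINITION} as an infimum of a continuous functional over a non-empty subset of a compact space, and then argue that the infimum is attained inside the admissible class. Fix $\mathcal{X}$ with $d=\dim(\mathcal{X})$; since a pair can be an ASF only when $S_{f,\tau}$ is invertible, the number $n$ of vectors must satisfy $n\geq d$, so we prove the statement for each such $n$, a Grassmannian frame being precisely an admissible pair realizing the infimum. The case $n=d$ is immediate: take an Auerbach basis $\{e_j\}_{j=1}^d$ of $\mathcal{X}$ with biorthogonal functionals $\{e_j^{*}\}_{j=1}^d$, so that $\|e_j\|=\|e_j^{*}\|=1$ and $e_j^{*}(e_k)=\delta_{jk}$; then $(\{e_j^{*}\}_{j=1}^d,\{e_j\}_{j=1}^d)$ is an ASF with $S_{e^{*},e}=I_{\mathcal{X}}$ (diagonalizable, all eigenvalues $1$), its frame correlation is $0$, and $0$ is the least value the frame correlation can take, so it is a Grassmannian frame.

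Assume now $n>d$. Let $\mathcal{K}$ be the set of pairs $(\{f_j\}_{j=1}^n,\{\omega_j\}_{j=1}^n)$ with $\|f_j\|=\|\omega_j\|=1$ and $f_j(\omega_j)=1$ for all $j$. As a closed subset of the product of the unit spheres of $\mathcal{X}^{*}$ and $\mathcal{X}$, which are compact because $\mathcal{X}$ is finite dimensional, $\mathcal{K}$ is compact, and $(\{f_j\},\{\omega_j\})\mapsto\mathcal{M}(\{f_j\},\{\omega_j\})=\max_{j\neq k}|f_j(\omega_k)|$ is continuous on $\mathcal{K}$. Let $\mathcal{C}\subseteq\mathcal{K}$ consist of those pairs that are ASFs for $\mathcal{X}$ with $S_{f,\omega}$ diagonalizable and all eigenvalues non-negative, so that the infimum defining a Grassmannian frame is $c_0:=\inf_{\mathcal{C}}\mathcal{M}$. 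The class $\mathcal{C}$ is non-empty: writing $n=qd+r$ with $0\leq r<d$ and letting the pairs cycle through the Auerbach pair $(\{e_j^{*}\},\{e_j\})$ produces a frame operator which is the positive diagonal operator scaling $r$ of the $e_j$ by $q+1$ and the remaining ones by $q$, hence invertible and diagonalizable. Thus $c_0$ is a finite real number, and by Theorem \ref{FIRSTWELCHBANACH} it satisfies $c_0\geq\bigl((n-d)/(d(n-1))\bigr)^{1/2}>0$.

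Next I would take a minimizing sequence in $\mathcal{C}$, extract a subsequence converging in $\mathcal{K}$ to a pair $(\{f_j\},\{\omega_j\})$, and use continuity of $\mathcal{M}$ to get $\mathcal{M}(\{f_j\},\{\omega_j\})=c_0$. Because $S_{f,\omega}$ depends continuously on the pair, the frame operators of the sequence converge to $S_{f,\omega}$; since the eigenvalue multiset depends continuously on the operator (a coefficientwise limit of characteristic polynomials with all roots real and non-negative again has all roots real and non-negative) and the trace equals $n$ throughout, the limiting operator $S_{f,\omega}$ has all eigenvalues non-negative and trace $n$. It remains to verify that this limiting pair lies in $\mathcal{C}$, i.e. that $S_{f,\omega}$ is still invertible and still diagonalizable.

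This verification is the crux, and it is a genuine difficulty rather than a formality, since $\mathcal{C}$ is not closed in $\mathcal{K}$ in general --- already in $\ell^1_2$ one can let a frame operator with distinct positive eigenvalues degenerate in the limit to a Jordan block $\bigl(\begin{smallmatrix}1&1\\0&1\end{smallmatrix}\bigr)$ --- so a priori a minimizing sequence could escape the admissible class. The way I would close the gap is to show that a frame correlation minimizer cannot be degenerate: if the limiting $S_{f,\omega}$ failed to be invertible or diagonalizable, I would perturb the functionals $f_j$, re-normalizing via Hahn--Banach so as to preserve $\|f_j\|=1$ and $f_j(\omega_j)=1$, into nearby pairs whose frame operators have simple positive spectrum (the generic behaviour among nearby operators), producing pairs in $\mathcal{C}$ with frame correlation tending to $c_0$, and then invoke a lower-semicontinuity style argument to conclude that some admissible pair actually achieves $c_0$ rather than merely approaching it. I expect essentially the entire substance of the proof to reside in this non-degeneracy analysis; the compactness skeleton above is routine.
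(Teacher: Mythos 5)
Your skeleton is the same as the paper's: both proofs realize the frame correlation $\mathcal{M}$ as a continuous function on a set of norm-one, biorthogonal-on-the-diagonal pairs sitting inside a compact product of spheres, and try to conclude by extracting a convergent minimizing (sub)sequence. Your Auerbach-basis construction for non-emptiness and the observation that $c_0\geq\sqrt{(n-d)/(d(n-1))}$ are fine. The difference is that the paper simply declares its admissible set $\mathcal{W}$ (pairs that are ASFs with $S_{f,\tau}$ diagonalizable and nonnegative-spectral) to be compact and spends all its effort on the continuity of $\Phi$, whereas you correctly isolate the real issue: invertibility of $S_{f,\tau}$ is an open condition and diagonalizability is not closed, so the admissible class is not closed in the ambient compact set, and a minimizing sequence can a priori degenerate, e.g.\ to a Jordan block $\bigl(\begin{smallmatrix}1&1\\0&1\end{smallmatrix}\bigr)$ or to a singular operator with a zero eigenvalue. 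You have diagnosed precisely the step the paper glosses over.

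However, your proposal does not close that gap, so as it stands it is incomplete. The final paragraph is a plan, not a proof: perturbing the limiting pair into nearby admissible pairs with frame correlation tending to $c_0$ only re-establishes that the infimum over $\mathcal{C}$ equals $c_0$, which you already knew; it does not produce an element \emph{of} $\mathcal{C}$ at which $c_0$ is attained, and the ``lower-semicontinuity style argument'' you invoke at the end is exactly the assertion that needs proving. Nor is it clear that a small perturbation preserving $\|f_j\|=1$ and $f_j(\omega_j)=1$ can always be chosen so as not to increase $\max_{j\neq k}|f_j(\omega_k)|$, which is what you would need to turn an inadmissible minimizer into an admissible one at the same correlation level. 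So the non-degeneracy analysis you flag as ``the entire substance of the proof'' is genuinely missing --- both from your write-up and, implicitly, from the paper's, which hides it inside the unproved sentence ``Then $\mathcal{W}$ is compact.''
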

\begin{proof}
Our arguments are motivated from the arguments given in  \cite{BENEDETTONKOLESAR} for Hilbert spaces which mainly uses compactness and continuity. We give arguments only for real Banach spaces and complex case follows by considering real and imaginary parts (of the linear functionals).	Define
\begin{align*}
	&S_\mathcal{X}^n\coloneqq \{(x_1, \dots, x_n ): x_1, \dots, x_n \in \mathcal{X}, \|x_1\|=\cdots= \|x_n\|=1\},\\
	&S_\mathcal{X^*}^n\coloneqq \{(\phi_1, \dots, \phi_n ): \phi_1, \dots, \phi_n \in \mathcal{X}^*, \|\phi_1\|=\cdots= \|\phi_n\|=1\}\\
	&\mathcal{W} \coloneqq \{((x_1, \dots, x_n ), (\phi_1, \dots, \phi_n )): ((x_1, \dots, x_n ), (\phi_1, \dots ,\phi_n )) \in S_\mathcal{X}^n\times S_\mathcal{X^*}^n, \phi_j(x_j)=1, \forall 1\leq j \leq n,\\
	& \quad \quad  (\{\phi_j \}_{j=1}^n, \{x_j \}_{j=1}^n) \text{ is an ASF for }\mathcal{X}  \text{ and the frame operator }  S_{\phi,x}\\
	&  \quad \quad
	 \text{is diagonalizable and its eigenvalues are all non negative} \},\\
	&\Phi: \mathcal{W} \to [0,1], \quad \Phi((x_1, \dots, x_n ), (\phi_1, \dots, \phi_n ))\coloneqq 	\mathcal{M}(\{\phi_j \}_{j=1}^n, \{x_j \}_{j=1}^n).
\end{align*}
We re norm $\mathcal{X}^n\times \mathcal{X^*}^n$ by 
\begin{align*}
\|((x_1, \dots, x_n ), (\phi_1, \dots, \phi_n ))\|	\coloneqq \sum_{j=1}^{n}(\|x_j\|+\|\phi_j\|)
\end{align*}
and consider $\mathcal{W}$ in this norm.  Then $\mathcal{W}$ is compact. We show that $\Phi$ is continuous on $\mathcal{W}$ which proves the theorem. Let $ (\{\tau_j \}_{j=1}^n, \{f_j \}_{j=1}^n)$ $ \in \mathcal{W}$. Let $\varepsilon>0$ be given. Define 
\begin{align*}
	R\coloneqq 1+ \max_{1\leq j,k \leq n}\{\|\tau_j\|, \|f_k\|\}
\end{align*}
and let 
\begin{align*}
	0<\delta < \frac{\sqrt{1+\varepsilon}-1}{R}.
\end{align*}
Now for any given $ (\{\omega_j \}_{j=1}^n, \{g_j \}_{j=1}^n)$ $ \in \mathcal{W}$,  with 
\begin{align*}
	\|((\omega_1, \dots, \omega_n ), (g_1, \dots, g_n ))-((\tau_1, \dots, \tau_n ), (f_1, \dots, f_n ))\|<\delta,
\end{align*}
if we define $h_j\coloneqq g_j-f_j, \rho_j\coloneqq \omega_j-\tau_j, \forall 1\leq j \leq n,$ then $\|h_j\|< \delta, \|\rho_j\|<\delta$.   Then 
\begin{align*}
	&|\Phi((\omega_1, \dots, \omega_n ), (g_1, \dots, g_n ))-\Phi((\tau_1, \dots, \tau_n ), (f_1, \dots, f_n ))|=\left|\max _{1\leq j,k \leq n, j\neq k}|g_j(\omega_k)|-\max _{1\leq j,k \leq n, j\neq k}|f_j(\tau_k)|\right|\\	
		&=\max _{1\leq j,k \leq n, j\neq k}\big| |g_j(\omega_k)|-|f_j(\tau_k)|\big|\leq \max _{1\leq j,k \leq n, j\neq k}\big| g_j(\omega_k)-f_j(\tau_k)\big|\\
		&=\max _{1\leq j,k \leq n, j\neq k}\big| (f_j+h_j)(\tau_k+\rho_k)-f_j(\tau_k)\big|=\max _{1\leq j,k \leq n, j\neq k}\big| f_j(\tau_k)+f_j(\rho_k)+h_j(\tau_k)+h_j(\rho_k)-f_j(\tau_k)\big|\\
	&=\max _{1\leq j,k \leq n, j\neq k}|f_j(\rho_k)|+\max _{1\leq j,k \leq n, j\neq k}|h_j(\tau_k)|+\max _{1\leq j,k \leq n, j\neq k}|h_j(\rho_k)|\\
	&\leq \max _{1\leq j,k \leq n, j\neq k}\|f_j\|\|\rho_k\|+\max _{1\leq j,k \leq n, j\neq k}\|h_j\|\|\tau_k\|+\max _{1\leq j,k \leq n, j\neq k}\|h_j\|\|\rho_k\|\\
	&\leq R\cdot \delta+\delta \cdot R+\delta^2\leq R\cdot \delta+\delta \cdot R+R\delta^2<\varepsilon.
\end{align*}
\end{proof}
 
  \begin{definition}
	Let  $ (\{f_j \}_{j=1}^n, \{\tau_j \}_{j=1}^n) $ be an ASF  for $\mathcal{X}$ satisfying $f_j(\tau_j)=1$, $\forall 1\leq j \leq n$. 	Assume that  the  frame operator 
$	S_{f, \tau}$  is diagonalizable and its eigenvalues are all non negative.   ASF $ (\{f_j \}_{j=1}^n, \{\tau_j \}_{j=1}^n) $   is said to be 	\textbf{$\gamma$-equiangular} if there exists $\gamma\geq0$ such that
  	\begin{align*}
  		|f_j(\tau_k)|^2=\gamma, \quad \forall  1\leq j,k \leq n, j\neq k.
  	\end{align*}
  \end{definition}
 Here is the  Zauner's conjecture for Banach spaces (see \cite{APPLEBY123, APPLEBY, ZAUNER, SCOTTGRASSL, FUCHSHOANGSTACEY, RENESBLUMEKOHOUTSCOTTCAVES, APPLEBYSYMM, BENGTSSON, APPLEBYFLAMMIAMCCONNELLYARD, KOPPCON, GOURKALEV, BENGTSSONZYCZKOWSKI, PAWELRUDNICKIZYCZKOWSKI, BENGTSSON123, MAGSINO}  for a comprehensive look at Zauner's conjecture in Hilbert spaces and its connections with Hilbert's 12-problem and Stark conjecture).
  \begin{conjecture}
  	\textbf{(Zauner's conjecture for Banach spaces}) \textbf{For  every $d\in \mathbb{N}$, there exists a $\frac{1}{d+1}$-equiangular tight ASF  $ (\{f_j \}_{j=1}^{d^2}, \{\tau_j \}_{j=1}^{d^2}) $   for $\mathbb{C}^d$ (w.r.t. any norm) such that $\|f_j\|=\|\tau_j\|=|f_j(\tau_j)|=1, \forall1 \leq j \leq n$, i.e., there exist a tight ASF for  $\mathbb{C}^d$ such that $\|f_j\|=\|\tau_j\|=|f_j(\tau_j)|=1, \forall1 \leq j \leq n$ and 
  	\begin{align*}
  		|f_j(\tau_k)|^2=\frac{1}{d+1}, \quad \forall  1\leq j,k \leq d^2, j\neq k.	
  \end{align*}}
  \end{conjecture}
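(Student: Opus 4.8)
One possible line of attack runs as follows. The plan is to reduce the conjecture to the existence of SIC-POVMs in the Euclidean space $\mathbb{C}^d$ together with a purely Banach-geometric normalization step. Fix a norm $\|\cdot\|$ on $\mathbb{C}^d$ and let $G\in\mathbb{M}_{d^2}(\mathbb{C})$ be a SIC Gram matrix: $G_{jj}=1$, $|G_{jk}|^2=\tfrac{1}{d+1}$ for $j\neq k$, and $G^2=dG$, i.e. $G=dP$ for a rank-$d$ orthogonal projection $P$; such a $G$ exists whenever Zauner's conjecture has been verified, and conjecturally for every $d$. First I would record that any realization of $G$ as a Gram matrix $G_{f,\tau}=\theta_f\theta_\tau$ is automatically a tight ASF, so the diagonalizability hypotheses used in the paper are not needed here: since $\theta_\tau\theta_f$ and $\theta_f\theta_\tau$ have the same nonzero spectrum, $S_{f,\tau}=\theta_\tau\theta_f$ is a $d\times d$ operator each of whose eigenvalues equals $d$, while $G^2=dG$ yields $S_{f,\tau}^3=d\,S_{f,\tau}^2$, so the minimal polynomial of $S_{f,\tau}$ divides $x^2(x-d)$ and has $d$ as its only root, forcing $S_{f,\tau}=d\,I_{\mathcal X}$.

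I would then realize $G$ inside $\mathcal X=(\mathbb{C}^d,\|\cdot\|)$ by a rank factorization $G=AB$ with $A\in\mathbb{M}_{d^2\times d}(\mathbb{C})$ and $B\in\mathbb{M}_{d\times d^2}(\mathbb{C})$, reading the columns of $B$ as vectors $\tau_j\in\mathcal X$ and the rows of $A$ as functionals $f_j\in\mathcal X^*$, so that $f_j(\tau_k)=G_{jk}$. This gives $|f_j(\tau_j)|=1$, the $\tfrac{1}{d+1}$-equiangularity, and (by the first step) tightness all at once, and every rank-$d$ matrix is realizable this way in every $d$-dimensional $\mathcal X$; what is not free is the joint normalization $\|\tau_j\|_{\mathcal X}=\|f_j\|_{\mathcal X^*}=1$. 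The factorization carries the gauge freedom $A\mapsto AM$, $B\mapsto M^{-1}B$ with $M\in GL_d(\mathbb{C})$, together with the phase freedom $G\mapsto DGD^{*}$ for diagonal unitary $D$. Since rescaling $\tau_j$ to unit norm preserves the off-diagonal moduli only when all $\|\tau_j\|_{\mathcal X}$ are already equal, I would first use $M$ to equalize $\|\tau_1\|_{\mathcal X},\dots,\|\tau_{d^2}\|_{\mathcal X}$ --- a John's-theorem-type statement about simultaneously equalizing norms, which I would try to prove by a properness and degree argument, or variationally, on $SL_d(\mathbb{C})$ --- then rescale globally so that all $\|\tau_j\|_{\mathcal X}=1$ (this automatically keeps $f_j(\tau_j)=1$ and the equiangularity), and finally, using the residual gauge and the phase freedom, arrange that each $f_j$ is a norming functional for $\tau_j$; here the controlling inequality is $\|f_j\|_{\mathcal X^*}\ge f_j(\tau_j)/\|\tau_j\|_{\mathcal X}=1$ with equality precisely when $f_j$ norms $\tau_j$, so a compactness argument minimizing $\sum_j\|f_j\|_{\mathcal X^*}^2$ over the gauge orbit should suffice, the crux being to show the minimum equals $d^2$.

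The main obstacle will be this last gauge-fixing step, for two essentially independent reasons. First, once the scale is pinned, $GL_d(\mathbb{C})$ supplies only about $d^2$ real parameters while the normalization demands roughly $2d^2$ real conditions, so the system is badly overdetermined unless one exploits the special structure of the SIC --- the natural candidate being Weyl-Heisenberg covariance, whose conjectural presence furnishes a large finite group permuting the $\tau_j$ and might collapse the normalization to one condition per group orbit; alternatively one could enlarge the search, optimizing the norm-fit not over the gauge orbit of one fixed $G$ but over the whole family of matrices unitarily similar to $dP$ with unimodular off-diagonal ratios. Second, and more basic, the object being reduced to --- the existence of SIC-POVMs for every $d$ --- is the well-known open problem, so an unconditional proof is out of reach; a realistic intermediate target, isolating the Banach-geometric content from the number-theoretic one, would be the conditional statement that \emph{for every norm on $\mathbb{C}^d$ and every $d$ admitting a group-covariant SIC, the conjecture holds}.
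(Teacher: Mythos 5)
There is nothing in the paper to compare your attempt against: the statement you are trying to prove is posed by the author as a \emph{conjecture} (an open problem motivated by the Hilbert-space Zauner conjecture), and the paper contains no proof of it. So the relevant question is only whether your proposal closes the problem, and it does not. You are candid about the two gaps, and both are genuine. First, your reduction is to the existence of a SIC Gram matrix $G=dP$ in $\mathbb{C}^d$, which is itself the open Zauner conjecture, so at best you obtain a conditional statement for those $d$ where SICs are known. Second, and more seriously for the Banach-space content, the gauge-fixing step is not an argument but a wish list: after using $M\in GL_d(\mathbb{C})$ to equalize the norms $\|\tau_j\|_{\mathcal X}$ and a global rescaling to make them $1$, you have essentially exhausted your $2d^2$ real parameters, while the remaining requirement that each $f_j$ be a norming functional for $\tau_j$ (i.e.\ $\|f_j\|_{\mathcal X^*}=1$ given $f_j(\tau_j)=1$ and $\|\tau_j\|=1$) imposes $d^2$ further independent real conditions that the diagonal phase freedom $G\mapsto DGD^*$ cannot touch for a general (non-unitarily-invariant) norm. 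Your proposed remedy of minimizing $\sum_j\|f_j\|_{\mathcal X^*}^2$ over the gauge orbit only helps if you can show the minimum equals $d^2$, which is exactly the assertion in question; nothing in the compactness argument forces this. Indeed, for a badly non-Euclidean norm such as $\ell_1^d$ or $\ell_\infty^d$ it is not even clear the statement is true, so any proof strategy must at some point use specific geometry of the norm rather than pure parameter counting.

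The parts of your proposal that do work are worth keeping: the observation that $G^2=dG$ together with the coincidence of the nonzero spectra of $\theta_f\theta_\tau$ and $\theta_\tau\theta_f$ forces $S_{f,\tau}=dI_{\mathcal X}$ is correct (all $d$ eigenvalues of the $d\times d$ operator $S_{f,\tau}$ equal $d$, and $S_{f,\tau}^3=dS_{f,\tau}^2$ then pins down the minimal polynomial as $x-d$), and the rank factorization $G=AB$ does realize any prescribed Gram matrix as an ASF in any $d$-dimensional space with the correct values $f_j(\tau_k)=G_{jk}$. This cleanly isolates the genuinely Banach-geometric obstruction --- the simultaneous bi-normalization $\|\tau_j\|_{\mathcal X}=\|f_j\|_{\mathcal X^*}=f_j(\tau_j)=1$ --- which would be a reasonable free-standing question to formulate and attack separately, but as it stands your write-up is a research program, not a proof.
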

  
  Following theorem again follows from Theorem \ref{FIRSTWELCHBANACH}.
  \begin{theorem}\label{CNG2}
	Let  $ (\{f_j \}_{j=1}^n, \{\tau_j \}_{j=1}^n) $ be as in Definition \ref{GRASSMANNIANDEFINITION}.   Then 
  	\begin{align}\label{EQUIANGULARINEQUALITY}
  		\mathcal{M}(\{f_j \}_{j=1}^n, \{\tau_j \}_{j=1}^n )\geq \sqrt{\frac{n-d}{d(n-1)}} \eqqcolon\gamma.
  	\end{align}
  	If the ASF is $\gamma$-equiangular, then we have equality in Inequality (\ref{EQUIANGULARINEQUALITY}).
  \end{theorem}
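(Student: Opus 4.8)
The plan is to derive both the inequality and the equality characterization directly from Theorem \ref{FIRSTWELCHBANACH}, which already delivers the first order Welch bound for Banach spaces under exactly the hypotheses bundled into Definition \ref{GRASSMANNIANDEFINITION}. First I would invoke the ``in particular'' clause of Theorem \ref{FIRSTWELCHBANACH}: since $ (\{f_j \}_{j=1}^n, \{\tau_j \}_{j=1}^n) $ is an ASF for $\mathcal{X}$ with $f_j(\tau_j)=1$ for all $j$, and the frame operator $S_{f,\tau}$ is diagonalizable with nonnegative eigenvalues, we immediately obtain
\begin{align*}
	\mathcal{M}(\{f_j \}_{j=1}^n, \{\tau_j \}_{j=1}^n )=\max _{1\leq j,k \leq n, j\neq k}|f_j(\tau_k)|\geq \sqrt{\frac{n-d}{d(n-1)}}=\gamma,
\end{align*}
which is Inequality (\ref{EQUIANGULARINEQUALITY}). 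Note $n\geq d$ is forced here because the ASF condition makes $S_{f,\tau}$ (or rather $\theta_\tau$) surjective onto a $d$-dimensional space from $\mathbb{K}^n$.

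For the equality assertion, suppose the ASF is $\gamma$-equiangular, i.e. $|f_j(\tau_k)|^2=\gamma^2=\frac{n-d}{d(n-1)}$ for all $j\neq k$. Then every off-diagonal term of $\mathcal{M}$ equals $\sqrt{\tfrac{n-d}{d(n-1)}}$, so trivially $\mathcal{M}(\{f_j \}_{j=1}^n, \{\tau_j \}_{j=1}^n )=\sqrt{\tfrac{n-d}{d(n-1)}}=\gamma$, giving equality in (\ref{EQUIANGULARINEQUALITY}). (Here one should note that since $n>d$, the constant $\gamma$ is strictly positive, so the $\gamma$-equiangularity is nondegenerate and the maximum is genuinely attained at this common value.)

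The only point that needs a sentence of care — and the mildest ``obstacle'' — is confirming that the $n\geq d$ needed to even state the bound $\sqrt{(n-d)/(d(n-1))}$ as a nonnegative quantity is not an extra hypothesis but a consequence: an ASF $ (\{f_j \}_{j=1}^n, \{\tau_j \}_{j=1}^n) $ for a $d$-dimensional $\mathcal{X}$ must have $n\geq d$ since $S_{f,\tau}=\theta_\tau\theta_f$ invertible forces $\theta_\tau\colon\mathbb{K}^n\to\mathcal{X}$ to be onto. Everything else is a direct quotation of Theorem \ref{FIRSTWELCHBANACH} and a one-line observation that a set all of whose elements equal $c$ has maximum $c$. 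I would write the proof in essentially three lines: cite Theorem \ref{FIRSTWELCHBANACH} for the inequality, then observe that $\gamma$-equiangularity collapses the max to $\gamma$.
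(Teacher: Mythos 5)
Your proposal is correct and matches the paper exactly: the paper gives no separate proof of Theorem \ref{CNG2}, merely remarking that it ``again follows from Theorem \ref{FIRSTWELCHBANACH}'', which is precisely your citation of the ``in particular'' clause plus the one-line observation that equiangularity collapses the maximum to the common value (your note that invertibility of $S_{f,\tau}=\theta_\tau\theta_f$ forces $n\geq d$ is a sensible addition the paper leaves implicit). One small point worth flagging: you read $\gamma$-equiangular as $|f_j(\tau_k)|^2=\gamma^2$, whereas the paper's discrete definition literally says $|f_j(\tau_k)|^2=\gamma$, under which the equality claim would give $\mathcal{M}=\sqrt{\gamma}$ rather than $\gamma$; your reading is the one that makes the theorem true (and agrees with the paper's own continuous definition), so the discrepancy is a defect of the paper's definition, not of your argument.
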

\begin{question}
\begin{enumerate}[\upshape (i)]
	\item \textbf{	Whether the equality in Inequality (\ref{EQUIANGULARINEQUALITY}) implies that the ASF $ (\{f_j \}_{j=1}^n,$ $ \{\tau_j \}_{j=1}^n) $  is $\gamma$-equiangular?}
	\item \textbf{Whether the validity of Inequality (\ref{EQUIANGULARINEQUALITY}) implies there is a relation between the number of elements in the ASF and dimension of the space (like Theorem 2.3 in \cite{STROHMERHEATH})?}
\end{enumerate}
\end{question}

  \section{Continuous Welch bounds for Banach spaces}
 Unlike the discrete setting, we need some concepts to get continuous version of Theorem    \ref{WELCHBANACH} for Banach spaces. Following definition is motivated from the notion of continuous frames for Hilbert spaces \cite{ALIANTOINEGAZEAU, KAISER}, continuous framings for Banach spaces \cite{LILIHAN},  continuous Schauder frames for Banach spaces \cite{EISNERFREEMAN} and    approximate Schauder frames for Banach spaces \cite{FREEMANODELL, THOMAS}  (which is motivated from the notion of Schauder frame  \cite{CASAZZADILWORTH} which is motivated from the notion of framing \cite{CASAZZAHANLARSON}  which is motivated from the Han-Larson-Naimark dilation theorem \cite{CZAJA}  for Hilbert space frames \cite{HANLARSONMEMOIRS}).
  \begin{definition}
 	Let 	$(\Omega, \mu)$ be a measure space. Let    $\{\tau_\alpha\}_{\alpha\in \Omega}$ be a collection in a Banach   space $\mathcal{X}$ and     $\{f_\alpha\}_{\alpha\in \Omega}$ be a collection in  $\mathcal{X}^*$. The pair $(\{f_\alpha\}_{\alpha\in \Omega}, \{\tau_\alpha\}_{\alpha\in \Omega})$   is said to be a \textbf{continuous approximate Schauder frame}  for $\mathcal{X}$ if the following holds. 	
 	\begin{enumerate}[\upshape(i)]
 		\item For every $x\in \mathcal{X}$ and for every $\phi \in \mathcal{X}^*$, the map 
 		\begin{align*}
 			 \Omega \ni \alpha \mapsto f_\alpha(x)\phi(\tau_\alpha)\in \mathbb{K}
 		\end{align*}
 	is measurable and integrable. 
 		\item The \textbf{frame operator} 
 		\begin{align*}
 		S_{f,\tau}:\mathcal{X} \ni x  \mapsto S_{f,\tau}x\coloneqq \int_{\Omega}	f_\alpha (x)\tau_\alpha \, d \mu(\alpha) \in \mathcal{X}
 		\end{align*} 
 	is a well-defined invertible bounded linear operator,  where the 	integral is weak integral (Pettis integrals \cite{TALAGRAND}).
 	\end{enumerate}
If $S_{f, \tau}=\lambda I_\mathcal{X}$, for some non zero scalar $\lambda$, then $ (\{f_\alpha\}_{\alpha\in \Omega}, \{\tau_\alpha\}_{\alpha\in \Omega})$ is called as  a \textbf{tight continuous ASF} for $\mathcal{X}$.  If we do not demand the invertibility of $S_{f,\tau}$, then we say that $(\{f_\alpha\}_{\alpha\in \Omega}, \{\tau_\alpha\}_{\alpha\in \Omega})$ is a \textbf{continuous approximate Bessel family}  for $\mathcal{X}$.
  \end{definition}

  %Proof of Welch bounds appears at various places  \cite{MASSEYMITTELHOLZER} \cite{SUSTIK} \cite{MASSEYON}. It also appears implicitly in the celebrated paper .\\

  %define potential and connect with trace

  Our first  observation is  that there is a large supply   of continuous frames for finite dimensional Banach spaces.  Here is a  result of  existence of them for  Banach spaces. Our result is motivated from the work in \cite{RAHIMIDARABYDARVISHI}.
  \begin{theorem}\label{FBFRAMEEXIST}
  	Let $\mathcal{X}$ be a finite dimensional Banach space of dimension $d$. 	Let $(\Omega, \mu)$ be a finite measure space such that there are measurable subsets $\Omega_1, \dots, \Omega_n$   satisfying 
  	\begin{align*}
  		\Omega=\Omega_1\cup \cdots \cup \Omega_n, \quad \Omega_j \cap \Omega_k=\emptyset, \quad \forall 1\leq j,k \leq n, j \neq n.
  	\end{align*} 
  If $n\geq d$,	 then there exists a continuous ASF  $(\{f_\alpha\}_{\alpha\in \Omega},\{\tau_\alpha\}_{\alpha\in \Omega})$  for $\mathcal{X}$.
  \end{theorem}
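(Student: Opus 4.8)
The plan is to reduce everything to the discrete case by taking $f_\alpha$ and $\tau_\alpha$ to be piecewise constant on the given partition $\Omega=\Omega_1\cup\cdots\cup\Omega_n$. First I would produce a discrete (in fact tight) ASF $(\{g_j\}_{j=1}^n,\{\omega_j\}_{j=1}^n)$ for $\mathcal{X}$. Fix a basis $\{e_i\}_{i=1}^d$ of $\mathcal{X}$ with coordinate functionals $\{e_i^*\}_{i=1}^d\subset\mathcal{X}^*$, and — using $n\geq d$ — set $\omega_j\coloneqq e_j$, $g_j\coloneqq e_j^*$ for $1\leq j\leq d$, and $\omega_j\coloneqq 0$, $g_j\coloneqq 0$ for $d<j\leq n$. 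Then $S_{g,\omega}x=\sum_{j=1}^d e_j^*(x)e_j=x$ for all $x\in\mathcal{X}$, so $S_{g,\omega}=I_\mathcal{X}$ is invertible.

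Next I would transport this to $\Omega$. For $\alpha\in\Omega_j$, set $f_\alpha\coloneqq g_j$ and $\tau_\alpha\coloneqq \mu(\Omega_j)^{-1}\omega_j$ when $\mu(\Omega_j)>0$, and $f_\alpha\coloneqq 0$, $\tau_\alpha\coloneqq 0$ when $\mu(\Omega_j)=0$ (the null pieces contribute nothing, so their definition is immaterial). For fixed $x\in\mathcal{X}$ and $\phi\in\mathcal{X}^*$, the function $\alpha\mapsto f_\alpha(x)\phi(\tau_\alpha)$ is constant on each $\Omega_j$, hence a simple function; it is measurable because each $\Omega_j$ is measurable and integrable because $\mu$ is finite and the function is bounded. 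Since $\mathcal{X}$ is finite dimensional, applying this to each coordinate functional shows that the weak (Pettis) integral $\int_{\Omega}f_\alpha(x)\tau_\alpha\,d\mu(\alpha)$ exists, so condition (i) in the definition of a continuous ASF holds.

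Finally I would compute the continuous frame operator: for every $x\in\mathcal{X}$,
\begin{align*}
S_{f,\tau}x=\sum_{j=1}^n\int_{\Omega_j}f_\alpha(x)\tau_\alpha\,d\mu(\alpha)=\sum_{j:\,\mu(\Omega_j)>0}\mu(\Omega_j)\cdot g_j(x)\cdot\mu(\Omega_j)^{-1}\omega_j=\sum_{j=1}^n g_j(x)\omega_j=S_{g,\omega}x=x,
\end{align*}
so $S_{f,\tau}=I_\mathcal{X}$ is a well-defined invertible bounded linear operator; in fact $(\{f_\alpha\}_{\alpha\in\Omega},\{\tau_\alpha\}_{\alpha\in\Omega})$ is a tight continuous ASF for $\mathcal{X}$.

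There is no substantial obstacle here: the entire content is the reduction from the discrete existence of an ASF (guaranteed by $n\geq d$) to the continuous one, and the only points demanding a little care are the bookkeeping around null sets $\Omega_j$ and the routine remark that piecewise-constant maps into a finite-dimensional Banach space are Pettis integrable over a finite measure space.
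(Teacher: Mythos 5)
Your construction is essentially the paper's: both proofs transport a discrete ASF $(\{g_j\},\{\omega_j\})$ to $\Omega$ by making $f_\alpha,\tau_\alpha$ constant on each $\Omega_j$ with a total normalization of $\mu(\Omega_j)^{-1}$ (the paper splits it as $\mu(\Omega_j)^{-1/2}$ on each of $f_\alpha$ and $\tau_\alpha$, you put all of it on $\tau_\alpha$), so that the continuous frame operator coincides with the invertible discrete one; your extra remarks on measurability and Pettis integrability only make explicit what the paper leaves unsaid.

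One caveat: your dismissal of the null pieces as ``immaterial'' is not quite right as written. If $\mu(\Omega_j)=0$ for some $j\leq d$, then in your final computation the step $\sum_{j:\,\mu(\Omega_j)>0} g_j(x)\omega_j=\sum_{j=1}^n g_j(x)\omega_j$ fails, because the dropped term $e_j^*(x)e_j$ is nonzero; $S_{f,\tau}$ is then a proper projection and not invertible. To repair this you must relabel so that the $d$ basis vectors are assigned to pieces of positive measure, which is possible only if at least $d$ of the $\Omega_j$ have positive measure --- a hypothesis the theorem does not state but which any piecewise-constant construction (including the paper's, which divides by $\sqrt{\mu(\Omega_j)}$ without addressing the case $\mu(\Omega_j)=0$) tacitly requires.
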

\begin{proof}
Let $ (\{g_j \}_{j=1}^n, \{\omega_j \}_{j=1}^n) $ be an ASF  for $\mathcal{X}$	(they exist. Infact, any spanning collection can be turned to an ASF or a pair of basis for the space and its dual works, see \cite{KRISHNAJOHNSON, MAHESHTHESIS}). Define
\begin{align*}
&f_\alpha \coloneqq \frac{g_j}{\sqrt{\mu(\Omega_j)}}	,\quad  \forall \alpha \in \Omega_j, \forall 1\leq j \leq n, \quad \tau_\alpha \coloneqq \frac{\omega_j}{\sqrt{\mu(\Omega_j)}}	,\quad  \forall \alpha \in \Omega_j,\forall 1\leq j \leq n.
\end{align*} 
Then 
\begin{align*}
	\int_{\Omega} f_\alpha (x)\tau_\alpha\, d \mu (\alpha)&=\sum_{j=1}^n\int_{\Omega_j} f_\alpha (x)\tau_\alpha\, d \mu (\alpha)=\sum_{j=1}^n\int_{\Omega_j}\frac{g_j(x)}{\sqrt{\mu(\Omega_j)}}\frac{\omega_j}{\sqrt{\mu(\Omega_j)}}\, d \mu (\alpha)\\
	&=\sum_{j=1}^ng_j(x)\omega_j, \quad \forall x \in \mathcal{X}.
\end{align*}
Hence $(\{f_\alpha\}_{\alpha\in \Omega},\{\tau_\alpha\}_{\alpha\in \Omega})$ is a continuous ASF  for $\mathcal{X}$.
\end{proof}

 % We note that the proof of Theorem \ref{FBFRAMEEXIST}   can be modified to get the following theorems (see \cite{RAHIMIDARABYDARVISHI} for the corresponding Hilbert space analogue).

  Like discrete case, we can get trace of frame operator using continuous ASFs.
  \begin{theorem}\label{TRACEFORMULA}
  	Let $(\{f_\alpha\}_{\alpha\in \Omega}, \{\tau_\alpha\}_{\alpha\in \Omega})$ be a continuous approximate Bessel family  for $\mathcal{X}$. Then 
  	\begin{align*}
  		&	\text{Tra}(S_{f,\tau})=\int_\Omega f_\alpha (\tau_\alpha)\, d \mu(\alpha),\\
  		&	\text{Tra}(S_{f,\tau}^2)=\int_\Omega\int_\Omega f_\beta (\tau_\alpha)f_\alpha (\tau_\beta)\, d \mu(\alpha)\, d \mu(\beta).
  	\end{align*}
  \end{theorem}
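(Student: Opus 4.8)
The plan is to reduce the trace of an operator on the finite-dimensional space $\mathcal{X}$ to a dual-basis sum, push the coordinate functionals through the Pettis integrals that define $S_{f,\tau}$, and then collapse the sum again using the reproducing identity for a dual basis. Fix a basis $\{e_i\}_{i=1}^d$ of $\mathcal{X}$ together with its dual basis $\{e_i^*\}_{i=1}^d\subseteq\mathcal{X}^*$, so that $e_i^*(e_k)=\delta_{ik}$ and $\operatorname{Tra}(T)=\sum_{i=1}^d e_i^*(Te_i)$ for every bounded linear operator $T$ on $\mathcal{X}$. The one elementary identity I shall use repeatedly is that, since $x=\sum_{i=1}^d e_i^*(x)\,e_i$ for all $x\in\mathcal{X}$, we have $\sum_{i=1}^d \phi(e_i)\,e_i^*(x)=\phi(x)$ for every $\phi\in\mathcal{X}^*$ and every $x\in\mathcal{X}$.

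For the first formula I would write $\operatorname{Tra}(S_{f,\tau})=\sum_{i=1}^d e_i^*(S_{f,\tau}e_i)=\sum_{i=1}^d e_i^*\!\left(\int_\Omega f_\alpha(e_i)\tau_\alpha\,d\mu(\alpha)\right)$. Since the defining integral of $S_{f,\tau}$ is a weak (Pettis) integral, the bounded functional $e_i^*$ commutes with it, giving $\sum_{i=1}^d\int_\Omega f_\alpha(e_i)\,e_i^*(\tau_\alpha)\,d\mu(\alpha)$; each integrand here is integrable by the defining property of a continuous approximate Bessel family applied to $x=e_i$, $\phi=e_i^*$, so the finite sum may be taken inside the integral. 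The integrand then becomes $\sum_{i=1}^d f_\alpha(e_i)\,e_i^*(\tau_\alpha)=f_\alpha(\tau_\alpha)$ by the identity above with $\phi=f_\alpha$ and $x=\tau_\alpha$, which yields $\operatorname{Tra}(S_{f,\tau})=\int_\Omega f_\alpha(\tau_\alpha)\,d\mu(\alpha)$.

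For the second formula I would first record that $S_{f,\tau}^2x=\int_\Omega f_\beta(S_{f,\tau}x)\,\tau_\beta\,d\mu(\beta)$ and, applying the Pettis property once more to $S_{f,\tau}x$, that $f_\beta(S_{f,\tau}x)=\int_\Omega f_\alpha(x)\,f_\beta(\tau_\alpha)\,d\mu(\alpha)$. Hence $\operatorname{Tra}(S_{f,\tau}^2)=\sum_{i=1}^d e_i^*\!\left(\int_\Omega\Bigl(\int_\Omega f_\alpha(e_i)f_\beta(\tau_\alpha)\,d\mu(\alpha)\Bigr)\tau_\beta\,d\mu(\beta)\right)$; pushing $e_i^*$ through the outer Pettis integral and then moving the finite sum inside the two integrals gives $\int_\Omega\int_\Omega\Bigl(\sum_{i=1}^d f_\alpha(e_i)\,e_i^*(\tau_\beta)\Bigr)f_\beta(\tau_\alpha)\,d\mu(\alpha)\,d\mu(\beta)$, and the collapse $\sum_{i=1}^d f_\alpha(e_i)\,e_i^*(\tau_\beta)=f_\alpha(\tau_\beta)$ finishes the computation, matching $\int_\Omega\int_\Omega f_\beta(\tau_\alpha)f_\alpha(\tau_\beta)\,d\mu(\alpha)\,d\mu(\beta)$.

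The only point needing real care is justifying each interchange of the finite basis sum with an integral. No Fubini or Tonelli theorem is needed, since the two integrals over $\Omega$ are never swapped with each other — the whole computation is carried out with iterated integrals — and interchanging a finite sum with a single integral only requires termwise integrability, which is precisely what the definition of a continuous approximate Bessel family provides. For the inner integral of the second formula one invokes it with $x=e_i$ and $\phi=f_\beta$; for the outer integral one invokes it with the fixed vector $x=S_{f,\tau}e_i$ (a legitimate element of $\mathcal{X}$ exactly because $S_{f,\tau}$ is assumed to be a well-defined operator given by a weak integral) and $\phi=e_i^*$. Once this bookkeeping of integrability hypotheses is in place, both identities drop out of the dual-basis reproducing identity.
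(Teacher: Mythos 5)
Your proof is correct and follows essentially the same route as the paper: both arguments fix a basis with its dual basis, express the trace as $\sum_i e_i^*(Te_i)$, pass the coordinate functionals through the Pettis integral, and collapse the finite sum via the reproducing identity $\sum_i \phi(e_i)e_i^*(x)=\phi(x)$ (the paper performs this collapse inside the argument of $f_\alpha$, writing $\sum_j\zeta_j(\tau_\alpha)\omega_j=\tau_\alpha$ and $\sum_j\zeta_j(\tau_\alpha)S_{f,\tau}\omega_j=S_{f,\tau}\tau_\alpha$, which is the same identity). Your explicit bookkeeping of which instance of the Bessel integrability hypothesis justifies each sum--integral interchange is a point the paper leaves implicit, but the substance is identical.
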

  \begin{proof}
  	Let $\{\omega_j\}_{j=1}^d$ be any basis for $\mathcal{X}$, where $d$ is the dimension of $\mathcal{X}$. Let $\{\zeta_j\}_{j=1}^d$ be the dual basis associated with $\{\omega_j\}_{j=1}^d$. Then $S_{f,\tau}x=	\sum_{j=1}^n\zeta_j(x)(S_{f,\tau}\omega_j)$ which gives 
  	\begin{align*}
  	\text{Tra}(S_{f,\tau})&=		\sum_{j=1}^n\zeta_j(S_{f,\tau}\omega_j)=	\sum_{j=1}^n\zeta_j\left(\int_\Omega f_\alpha (\omega_j)\tau_\alpha\,d\mu(\alpha)\right)\\
  	&=\sum_{j=1}^n\int_\Omega f_\alpha (\omega_j)\zeta_j(\tau_\alpha)\,d\mu(\alpha)=\int_\Omega f_\alpha \left(\sum_{j=1}^n\zeta_j(\tau_\alpha)\omega_j\right)\,d\mu(\alpha)\\
  	&=\int_\Omega f_\alpha (\tau_\alpha)\, d \mu(\alpha)
  	\end{align*}
  and 
  	\begin{align*}
  	 	\text{Tra}(S^2_{f,\tau})&=		\sum_{j=1}^n\zeta_j(S^2_{f,\tau}\omega_j)=	\sum_{j=1}^n\zeta_j\left(\int_\Omega f_\alpha (S_{f,\tau}\omega_j)\tau_\alpha\,d\mu(\alpha)\right)\\
  	&=\sum_{j=1}^n\int_\Omega f_\alpha (S_{f,\tau}\omega_j)\zeta_j(\tau_\alpha)\,d\mu(\alpha)=\int_\Omega f_\alpha \left(\sum_{j=1}^n\zeta_j(\tau_\alpha)S_{f,\tau}\omega_j\right)\,d\mu(\alpha)\\
  	&=\int_\Omega f_\alpha (S_{f,\tau}\tau_\alpha)\, d \mu(\alpha)=\int_\Omega f_\alpha\left(\int_{\Omega}f_\beta (\tau_\alpha)\tau_\beta \, d \mu (\beta)\right)\, d \mu(\alpha)\\
  	&=\int_\Omega\int_\Omega f_\beta (\tau_\alpha)f_\alpha (\tau_\beta)\, d \mu(\alpha)\, d \mu(\beta).
  	\end{align*}
  \end{proof}
  Note that we did not assume   any condition on set of vectors and functionals to derive Theorem \ref{DFBS}. The reason is that frame operator always exists in discrete case. To get  the existence of frame operator we assumed Besselness in Theorem \ref{TRACEFORMULA}. We now derive continuous versions of Theorem  \ref{FIRSTWELCHBANACH}   and Theorem \ref{WELCHBANACH}.
 \begin{theorem}\label{FIRSTORDERCONTINUOUS}
	\textbf{(First order  continuous Welch bound for Banach spaces)}  Let $(\Omega, \mu)$ be a $\sigma$-finite measure space and $(\{f_\alpha\}_{\alpha\in \Omega}, \{\tau_\alpha\}_{\alpha\in \Omega})$ be a 	 continuous Bessel  family for finite dimensional Banach space $\mathcal{X}$ of dimension $d$. If the diagonal $\Delta\coloneqq \{(\alpha, \alpha):\alpha \in \Omega\}$ is measurable in the measure space $\Omega\times \Omega$,
	\begin{align*}
		\int_{\Omega\times\Omega}|f_\alpha(\tau_\beta)f_\beta(\tau_\alpha)|\, d(\mu\times\mu)(\alpha,\beta)<\infty,
	\end{align*} 
and the operator $	S_{f, \tau}:\mathcal{X}\ni x \mapsto S_{f, \tau}x\coloneqq \int_{\Omega}f_\alpha(x)\tau_\alpha \, d\mu (\alpha) \in\mathcal{X}$  is diagonalizable and its eigenvalues are all non negative,  then 
	\begin{align}\label{1}
  		\int_{\Omega\times\Omega}f_\alpha(\tau_\beta)f_\beta(\tau_\alpha)\, d(\mu\times\mu)(\alpha,\beta)&=\int_{\Omega}\int_{\Omega}f_\alpha(\tau_\beta)f_\beta(\tau_\alpha)\, d \mu(\alpha)\, d \mu(\beta)\nonumber\\
  		&\geq \frac{1}{d}	\left(\int_{\Omega}
  		f_\alpha(\tau_\alpha)\, d \mu(\alpha)\right)^2=	\frac{1}{\text{dim}(\mathcal{X})}\left(\int_{\Omega}
  		f_\alpha(\tau_\alpha)\, d \mu(\alpha)\right)^2.
  	\end{align}	
  	and 
  \begin{align*}
  		\sup _{\alpha, \beta \in \Omega, \alpha\neq \beta}|f_\alpha(\tau_\beta)f_\beta(\tau_\alpha)|&\geq \frac{\frac{1}{d}	\left(\int_{\Omega}
  			f_\alpha(\tau_\alpha)\, d \mu(\alpha)\right)^2-	\int_{\Delta}|f_\alpha(\tau_\alpha)|^2\, d(\mu\times\mu)(\alpha,\alpha)}{(\mu\times\mu)((\Omega\times\Omega)\setminus\Delta)}\\
  		&=\frac{\frac{1}{\text{dim}(\mathcal{X})}	\left(\int_{\Omega}
  			f_\alpha(\tau_\alpha)\, d \mu(\alpha)\right)^2-	\int_{\Delta}|f_\alpha(\tau_\alpha)|^2\, d(\mu\times\mu)(\alpha,\alpha)}{(\mu\times\mu)((\Omega\times\Omega)\setminus\Delta)},
  \end{align*}
  \begin{align*}
  	\sup _{\alpha, \beta \in \Omega, \alpha\neq \beta}|f_\alpha(\tau_\beta)|&\geq \sqrt{\frac{\frac{1}{d}	\left(\int_{\Omega}
  			f_\alpha(\tau_\alpha)\, d \mu(\alpha)\right)^2-	\int_{\Delta}|f_\alpha(\tau_\alpha)|^2\, d(\mu\times\mu)(\alpha,\alpha)}{(\mu\times\mu)((\Omega\times\Omega)\setminus\Delta)}}\\
  		&=\sqrt{\frac{\frac{1}{\text{dim}(\mathcal{X})}	\left(\int_{\Omega}
  				f_\alpha(\tau_\alpha)\, d \mu(\alpha)\right)^2-	\int_{\Delta}|f_\alpha(\tau_\alpha)|^2\, d(\mu\times\mu)(\alpha,\alpha)}{(\mu\times\mu)((\Omega\times\Omega)\setminus\Delta)}}.
  \end{align*}
  Further, equality holds in Inequality (\ref{1}) if and only if $ (\{f_\alpha\}_{\alpha\in \Omega}, \{\tau_\alpha\}_{\alpha\in \Omega}) $ is  a tight continuous  ASF for $\mathcal{X}$. 
  In particular, if $f_\alpha(\tau_\alpha)=1$ for all $\alpha \in \Omega$, then 
  \begin{align*}
  	\sup _{\alpha, \beta \in \Omega, \alpha\neq \beta}|f_\alpha(\tau_\beta)f_\beta(\tau_\alpha)|&\geq \frac{1}{(\mu\times\mu)((\Omega\times\Omega)\setminus\Delta)}\left[\frac{\mu(\Omega)^2}{d}-(\mu\times\mu)(\Delta)\right]\\
  	&=\frac{1}{(\mu\times\mu)((\Omega\times\Omega)\setminus\Delta)}\left[\frac{\mu(\Omega)^2}{\text{dim}(\mathcal{X})}-(\mu\times\mu)(\Delta)\right]. 
  \end{align*}
  and we have \textbf{first order continuous Welch bound for Banach spaces}
  \begin{align*}
  	 	\sup _{\alpha, \beta \in \Omega, \alpha\neq \beta}|f_\alpha(\tau_\beta)|&\geq \sqrt{\frac{1}{(\mu\times\mu)((\Omega\times\Omega)\setminus\Delta)}\left[\frac{\mu(\Omega)^2}{d}-(\mu\times\mu)(\Delta)\right]}\\
  	 	&= \sqrt{\frac{1}{(\mu\times\mu)((\Omega\times\Omega)\setminus\Delta)}\left[\frac{\mu(\Omega)^2}{\text{dim}(\mathcal{X})}-(\mu\times\mu)(\Delta)\right]}.
  \end{align*}
  \end{theorem}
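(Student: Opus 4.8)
The plan is to run the proof of Theorem~\ref{FIRSTWELCHBANACH} essentially verbatim, with the finite sums replaced by Pettis integrals and the required trace identities now supplied by Theorem~\ref{TRACEFORMULA}. Since $(\{f_\alpha\}_{\alpha\in\Omega},\{\tau_\alpha\}_{\alpha\in\Omega})$ is a continuous Bessel family for the $d$-dimensional space $\mathcal{X}$, the operator $S_{f,\tau}$ is a well-defined bounded linear operator, so Theorem~\ref{TRACEFORMULA} applies and gives
\begin{align*}
\operatorname{Tra}(S_{f,\tau})=\int_{\Omega}f_\alpha(\tau_\alpha)\,d\mu(\alpha),\qquad \operatorname{Tra}(S_{f,\tau}^2)=\int_{\Omega}\int_{\Omega}f_\beta(\tau_\alpha)f_\alpha(\tau_\beta)\,d\mu(\alpha)\,d\mu(\beta).
\end{align*}
Writing $\lambda_1,\dots,\lambda_d$ for the eigenvalues of $S_{f,\tau}$ listed with multiplicity, the hypotheses give $\lambda_k\ge 0$ and, by diagonalizability, $\operatorname{Tra}(S_{f,\tau})=\sum_{k=1}^d\lambda_k$ and $\operatorname{Tra}(S_{f,\tau}^2)=\sum_{k=1}^d\lambda_k^2$. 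The Cauchy--Schwarz inequality $\big(\sum_{k=1}^d\lambda_k\big)^2\le d\sum_{k=1}^d\lambda_k^2$ then reads $\big(\operatorname{Tra}(S_{f,\tau})\big)^2\le d\,\operatorname{Tra}(S_{f,\tau}^2)$, which after substituting the two trace formulas is precisely Inequality~(\ref{1}); here the hypotheses that $\mu$ is $\sigma$-finite and that $\int_{\Omega\times\Omega}|f_\alpha(\tau_\beta)f_\beta(\tau_\alpha)|\,d(\mu\times\mu)<\infty$ let Fubini's theorem identify the iterated integral with the integral over $\Omega\times\Omega$ and keep every quantity finite.

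Next I would pass to the supremum estimates. Using that $\Delta$ is measurable, decompose
\begin{align*}
\operatorname{Tra}(S_{f,\tau}^2)=\int_{\Omega\times\Omega}f_\alpha(\tau_\beta)f_\beta(\tau_\alpha)\,d(\mu\times\mu)(\alpha,\beta)=\int_{\Delta}f_\alpha(\tau_\alpha)^2\,d(\mu\times\mu)(\alpha,\alpha)+\int_{(\Omega\times\Omega)\setminus\Delta}f_\alpha(\tau_\beta)f_\beta(\tau_\alpha)\,d(\mu\times\mu)(\alpha,\beta),
\end{align*}
and then, exactly as in the proof of Theorem~\ref{FIRSTWELCHBANACH}, bound the off-diagonal term by the triangle inequality together with $|f_\alpha(\tau_\beta)f_\beta(\tau_\alpha)|\le \sup_{\alpha\neq\beta}|f_\alpha(\tau_\beta)f_\beta(\tau_\alpha)|$, contributing $(\mu\times\mu)((\Omega\times\Omega)\setminus\Delta)\cdot\sup_{\alpha\neq\beta}|f_\alpha(\tau_\beta)f_\beta(\tau_\alpha)|$, and bound the diagonal term by $\int_\Delta f_\alpha(\tau_\alpha)^2\,d(\mu\times\mu)\le\int_\Delta|f_\alpha(\tau_\alpha)|^2\,d(\mu\times\mu)$. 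Combining these with Inequality~(\ref{1}) and rearranging yields the first displayed supremum bound; the bound on $\sup_{\alpha\neq\beta}|f_\alpha(\tau_\beta)|$ then follows from $|f_\alpha(\tau_\beta)f_\beta(\tau_\alpha)|\le\big(\sup_{\alpha\neq\beta}|f_\alpha(\tau_\beta)|\big)^2$, proved exactly as in the discrete case by interchanging the roles of the two indices. Specializing to $f_\alpha(\tau_\alpha)=1$ for all $\alpha$ turns $\int_\Omega f_\alpha(\tau_\alpha)\,d\mu$ into $\mu(\Omega)$ and $\int_\Delta|f_\alpha(\tau_\alpha)|^2\,d(\mu\times\mu)$ into $(\mu\times\mu)(\Delta)$, which gives the stated particular cases.

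For the equality clause I would track the Cauchy--Schwarz step: $\big(\sum_{k=1}^d\lambda_k\big)^2=d\sum_{k=1}^d\lambda_k^2$ holds if and only if all the $\lambda_k$ coincide, i.e.\ $S_{f,\tau}$ is a nonzero scalar multiple of $I_\mathcal{X}$, which is exactly the assertion that $(\{f_\alpha\}_{\alpha\in\Omega},\{\tau_\alpha\}_{\alpha\in\Omega})$ is a tight continuous ASF for $\mathcal{X}$. Since the algebraic skeleton is identical to Theorem~\ref{FIRSTWELCHBANACH}, I expect the only genuine work to be measure-theoretic bookkeeping: checking that $(\alpha,\beta)\mapsto f_\alpha(\tau_\beta)f_\beta(\tau_\alpha)$ is $(\mu\times\mu)$-measurable on $\Omega\times\Omega$, that its restriction to $\Delta$ is integrable with respect to $\mu\times\mu$, and that Tonelli/Fubini may be applied on the $\sigma$-finite product space. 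This is precisely where the hypotheses ``$\Delta$ measurable'' and ``$\int_{\Omega\times\Omega}|f_\alpha(\tau_\beta)f_\beta(\tau_\alpha)|\,d(\mu\times\mu)<\infty$'' are used, and where Theorem~\ref{TRACEFORMULA} does the heavy lifting.
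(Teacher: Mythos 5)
Your proposal is correct and follows essentially the same route as the paper's own proof: the Cauchy--Schwarz bound $(\sum_k\lambda_k)^2\le d\sum_k\lambda_k^2$ applied to the eigenvalues of $S_{f,\tau}$, the trace identities of Theorem~\ref{TRACEFORMULA}, Fubini to pass to $\Omega\times\Omega$, and the split of the double integral into $\Delta$ and $(\Omega\times\Omega)\setminus\Delta$ with the off-diagonal part bounded by the supremum times the measure of the complement. The remaining steps (the $\sup_{\alpha\neq\beta}|f_\alpha(\tau_\beta)|$ bound, the tightness characterization of equality, and the specialization $f_\alpha(\tau_\alpha)=1$) are handled exactly as you describe, mirroring the discrete case.
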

  \begin{proof}
  	Let $\lambda_1, \dots, \lambda_d$ be eigenvalues of the frame operator $S_{f, \tau}$. Then $\lambda_1, \dots, \lambda_d\geq0$. Now   using  Theorem  \ref{TRACEFORMULA} we get 
   	\begin{align*}
  	\left(\int_\Omega f_\alpha (\tau_\alpha)\, d \mu(\alpha)\right)^2&=	(\operatorname{Tra}(S_{f,\tau}))^2=\left(\sum_{k=1}^d
  	\lambda_k\right)^2\leq d \sum_{k=1}^d
  	\lambda_k^2\\
  	&=d\operatorname{Tra}(S^2_{f,\tau})=d\int_\Omega\int_\Omega f_\beta (\tau_\alpha)f_\alpha (\tau_\beta)\, d \mu(\alpha)\, d \mu(\beta).
  \end{align*}	
  	which gives the first inequality.	
  	Using Fubini's theorem, now we get 
  	
  	 \begin{align*}
  		\frac{1}{d} \left(\int_\Omega f_\alpha (\tau_\alpha)\, d \mu(\alpha)\right)^2&\leq\int_\Omega\int_\Omega f_\alpha (\tau_\beta)f_\beta (\tau_\alpha)\, d \mu(\alpha)\, d \mu(\beta)
  		=	\int_{\Omega\times\Omega}f_\alpha(\tau_\beta)f_\beta(\tau_\alpha)\, d(\mu\times\mu)(\alpha,\beta)\\
  		&=	\int_{\Delta}f_\alpha(\tau_\beta)f_\beta(\tau_\alpha)\, d(\mu\times\mu)(\alpha,\beta)+	\int_{(\Omega\times\Omega)\setminus \Delta}f_\alpha(\tau_\beta)f_\beta(\tau_\alpha)\, d(\mu\times\mu)(\alpha,\beta)\\
  		&=\int_{\Delta}|f_\alpha(\tau_\alpha)|^2\, d(\mu\times\mu)(\alpha,\alpha)+	\int_{(\Omega\times\Omega)\setminus \Delta}f_\alpha(\tau_\beta)f_\beta(\tau_\alpha)\, d(\mu\times\mu)(\alpha,\beta)\\
  		&\leq \int_{\Delta}|f_\alpha(\tau_\alpha)|^2\, d(\mu\times\mu)(\alpha,\alpha)+(\mu\times\mu)((\Omega\times\Omega)\setminus\Delta)\sup _{\alpha, \beta \in \Omega, \alpha\neq \beta}|f_\alpha(\tau_\beta)f_\beta(\tau_\alpha)|.	
  	\end{align*}	
  \end{proof}
  \begin{theorem}\label{CONTINUOUSWELCHMAINSECOND}
  	\textbf{(Continuous Welch bounds for Banach spaces)} Let $m \in \mathbb{N}$,  $(\Omega, \mu)$ be a $\sigma$-finite measure space and $(\{f_\alpha\}_{\alpha\in \Omega}, \{\tau_\alpha\}_{\alpha\in \Omega})$ be a 	 continuous Bessel  family for finite dimensional Banach space $\mathcal{X}$ of dimension $d$. If the diagonal $\Delta\coloneqq \{(\alpha, \alpha):\alpha \in \Omega\}$ is measurable in the measure space $\Omega\times \Omega$,
  \begin{align*}
  	\int_{\Omega\times\Omega}|f_\alpha(\tau_\beta)f_\beta(\tau_\alpha)|^m\, d(\mu\times\mu)(\alpha,\beta)<\infty,
  \end{align*} 
  and the operator $ S_{f,\tau}	: \text{Sym}^m(\mathcal{X})\ni x \mapsto  \int_{\Omega}f_\alpha^{\otimes m}(x)\tau_\alpha ^{\otimes m}\, d\mu (\alpha)\in\text{Sym}^m(\mathcal{X})$  is diagonalizable and its eigenvalues are all non negative,  then 
  \begin{align}\label{2}
  	\int_{\Omega\times\Omega}f_\alpha(\tau_\beta)^mf_\beta(\tau_\alpha)^m\, d(\mu\times\mu)(\alpha,\beta)&=\int_{\Omega}\int_{\Omega}f_\alpha(\tau_\beta)^mf_\beta(\tau_\alpha)^m\, d \mu(\alpha)\, d \mu(\beta)\nonumber\\
  	&\geq \frac{1}{{d+m-1 \choose m}}	\left(\int_{\Omega}
  	f_\alpha(\tau_\alpha)^m\, d \mu(\alpha)\right)^2 \nonumber \\
  	&=	\frac{1}{\text{dim}(\text{Sym}^m(\mathcal{X}))}\left(\int_{\Omega}
  	f_\alpha(\tau_\alpha)^m\, d \mu(\alpha)\right)^2.
  \end{align}	
  and 
  \begin{align}\label{WELCHCONTINUOUS4}
  	\sup _{\alpha, \beta \in \Omega, \alpha\neq \beta}|f_\alpha(\tau_\beta)f_\beta(\tau_\alpha)|^m&\geq \frac{\frac{1}{{d+m-1 \choose m}}	\left(\int_{\Omega}
  		f_\alpha(\tau_\alpha)^m\, d \mu(\alpha)\right)^2-	\int_{\Delta}|f_\alpha(\tau_\alpha)|^{2m}\, d(\mu\times\mu)(\alpha,\alpha)}{(\mu\times\mu)((\Omega\times\Omega)\setminus\Delta)}\nonumber\\
  	&=\frac{\frac{1}{\text{dim}(\text{Sym}^m(\mathcal{X}))}	\left(\int_{\Omega}
  		f_\alpha(\tau_\alpha)^m\, d \mu(\alpha)\right)^2-	\int_{\Delta}|f_\alpha(\tau_\alpha)|^{2m}\, d(\mu\times\mu)(\alpha,\alpha)}{(\mu\times\mu)((\Omega\times\Omega)\setminus\Delta)},
  \end{align}
  \begin{align}\label{WELCHCONTINUOUS5}
  	\sup _{\alpha, \beta \in \Omega, \alpha\neq \beta}|f_\alpha(\tau_\beta)|^m&\geq \sqrt{\frac{\frac{1}{{d+m-1 \choose m}}	\left(\int_{\Omega}
  			f_\alpha(\tau_\alpha)^m\, d \mu(\alpha)\right)^2-	\int_{\Delta}|f_\alpha(\tau_\alpha)|^{2m}\, d(\mu\times\mu)(\alpha,\alpha)}{(\mu\times\mu)((\Omega\times\Omega)\setminus\Delta)}}\nonumber\\
  		&=\sqrt{\frac{\frac{1}{\text{dim}(\text{Sym}^m(\mathcal{X}))}	\left(\int_{\Omega}
  				f_\alpha(\tau_\alpha)^m\, d \mu(\alpha)\right)^2-	\int_{\Delta}|f_\alpha(\tau_\alpha)|^{2m}\, d(\mu\times\mu)(\alpha,\alpha)}{(\mu\times\mu)((\Omega\times\Omega)\setminus\Delta)}}.
  \end{align}
  Further, equality holds in Inequality (\ref{2}) if and only if $ (\{f_\alpha\}_{\alpha\in \Omega}, \{\tau_\alpha\}_{\alpha\in \Omega}) $ is  a tight continuous  ASF for $\text{Sym}^m(\mathcal{X})$. 
  In particular, if $f_\alpha(\tau_\alpha)=1$ for all $\alpha \in \Omega$, then 
  \begin{align*}
  	\sup _{\alpha, \beta \in \Omega, \alpha\neq \beta}|f_\alpha(\tau_\beta)f_\beta(\tau_\alpha)|^m&\geq \frac{1}{(\mu\times\mu)((\Omega\times\Omega)\setminus\Delta)}\left[\frac{\mu(\Omega)^2}{{d+m-1 \choose m}}-(\mu\times\mu)(\Delta)\right]\\
  	&=\frac{1}{(\mu\times\mu)((\Omega\times\Omega)\setminus\Delta)}\left[\frac{\mu(\Omega)^2}{\text{dim}(\text{Sym}^m(\mathcal{X}))}-(\mu\times\mu)(\Delta)\right]. 
  \end{align*}
  and we have \textbf{higher order continuous Welch bound for Banach spaces}
  \begin{align*}
  	\sup _{\alpha, \beta \in \Omega, \alpha\neq \beta}|f_\alpha(\tau_\beta)|^m&\geq \sqrt{\frac{1}{(\mu\times\mu)((\Omega\times\Omega)\setminus\Delta)}\left[\frac{\mu(\Omega)^2}{{d+m-1 \choose m}}-(\mu\times\mu)(\Delta)\right]}\\
  	&= \sqrt{\frac{1}{(\mu\times\mu)((\Omega\times\Omega)\setminus\Delta)}\left[\frac{\mu(\Omega)^2}{\text{dim}(\text{Sym}^m(\mathcal{X}))}-(\mu\times\mu)(\Delta)\right]}.
  \end{align*}	
  \end{theorem}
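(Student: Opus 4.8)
The plan is to run the argument of Theorem \ref{FIRSTORDERCONTINUOUS} verbatim, but in the Banach space $\text{Sym}^m(\mathcal{X})$ in place of $\mathcal{X}$, exactly as Theorem \ref{WELCHBANACH} was obtained from Theorem \ref{FIRSTWELCHBANACH} in the discrete setting. First I would record the elementary identities $f_\alpha^{\otimes m}(\tau_\beta^{\otimes m})=f_\alpha(\tau_\beta)^m$ and $f_\alpha^{\otimes m}(\tau_\alpha^{\otimes m})=f_\alpha(\tau_\alpha)^m$, which rewrite every quantity in the statement as a pairing on $\text{Sym}^m(\mathcal{X})$. Then I would check that $(\{f_\alpha^{\otimes m}\}_{\alpha\in\Omega},\{\tau_\alpha^{\otimes m}\}_{\alpha\in\Omega})$ is a continuous approximate Bessel family for $\text{Sym}^m(\mathcal{X})$ whose frame operator is precisely the operator $S_{f,\tau}$ on $\text{Sym}^m(\mathcal{X})$ hypothesized to be diagonalizable with nonnegative eigenvalues; the integrability condition $\int_{\Omega\times\Omega}|f_\alpha(\tau_\beta)f_\beta(\tau_\alpha)|^m\,d(\mu\times\mu)(\alpha,\beta)<\infty$ is exactly the hypothesis of Theorem \ref{FIRSTORDERCONTINUOUS} transported to the tensor family.

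Next, applying Theorem \ref{TRACEFORMULA} to this family gives $\operatorname{Tra}(S_{f,\tau})=\int_{\Omega}f_\alpha(\tau_\alpha)^m\,d\mu(\alpha)$ and $\operatorname{Tra}(S^2_{f,\tau})=\int_{\Omega}\int_{\Omega}f_\beta(\tau_\alpha)^m f_\alpha(\tau_\beta)^m\,d\mu(\alpha)\,d\mu(\beta)$. Writing $\lambda_1,\dots,\lambda_N\ge 0$ for the eigenvalues of $S_{f,\tau}$ with $N=\dim(\text{Sym}^m(\mathcal{X}))=\binom{d+m-1}{m}$ by Theorem \ref{SYMMETRICTENSORDIMENSION}, the Cauchy--Schwarz inequality $\bigl(\sum_{l=1}^N\lambda_l\bigr)^2\le N\sum_{l=1}^N\lambda_l^2$ combined with these two trace formulas yields Inequality (\ref{2}), with equality if and only if all $\lambda_l$ coincide, i.e. $S_{f,\tau}=\lambda I$ for some $\lambda>0$, which is the tight continuous ASF condition. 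For the sup-type bounds (\ref{WELCHCONTINUOUS4}) and (\ref{WELCHCONTINUOUS5}) I would apply Fubini's theorem to split $\int_{\Omega\times\Omega}f_\alpha(\tau_\beta)^m f_\beta(\tau_\alpha)^m\,d(\mu\times\mu)$ into its restriction to $\Delta$, which equals $\int_{\Delta}|f_\alpha(\tau_\alpha)|^{2m}\,d(\mu\times\mu)(\alpha,\alpha)$, plus its restriction to $(\Omega\times\Omega)\setminus\Delta$, which is bounded above by $(\mu\times\mu)((\Omega\times\Omega)\setminus\Delta)\cdot\sup_{\alpha\neq\beta}|f_\alpha(\tau_\beta)f_\beta(\tau_\alpha)|^m$; rearranging gives (\ref{WELCHCONTINUOUS4}), and the elementary estimate $|f_\alpha(\tau_\beta)f_\beta(\tau_\alpha)|\le\bigl(\sup_{\alpha\neq\beta}|f_\alpha(\tau_\beta)|\bigr)^2$ from the proof of Theorem \ref{WELCHBANACH} upgrades this to (\ref{WELCHCONTINUOUS5}). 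Finally, specializing to $f_\alpha(\tau_\alpha)=1$ for all $\alpha$ turns $\int_{\Omega}f_\alpha(\tau_\alpha)^m\,d\mu(\alpha)$ into $\mu(\Omega)$ and $\int_{\Delta}|f_\alpha(\tau_\alpha)|^{2m}\,d(\mu\times\mu)(\alpha,\alpha)$ into $(\mu\times\mu)(\Delta)$, producing the displayed closed forms and the higher order continuous Welch bound.

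The main obstacle I anticipate is not the algebra but the measure-theoretic bookkeeping for the tensor family: one must confirm that for each $x\in\text{Sym}^m(\mathcal{X})$ and $\phi\in\text{Sym}^m(\mathcal{X})^*$ the scalar map $\alpha\mapsto f_\alpha^{\otimes m}(x)\,\phi(\tau_\alpha^{\otimes m})$ is measurable and integrable, that the weak (Pettis) integral defining $S_{f,\tau}$ on $\text{Sym}^m(\mathcal{X})$ genuinely exists, and that the finiteness hypothesis on $\int_{\Omega\times\Omega}|f_\alpha(\tau_\beta)f_\beta(\tau_\alpha)|^m\,d(\mu\times\mu)$ is exactly what licenses Fubini's theorem for the double integral of $f_\alpha(\tau_\beta)^m f_\beta(\tau_\alpha)^m$. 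Since $\text{Sym}^m(\mathcal{X})$ is finite dimensional, all of these reduce to scalar statements about finitely many coordinate functions built from the $f_\alpha$ and $\tau_\alpha$, so they are routine, but they should be stated explicitly; once in place, every remaining step is identical to the proof of Theorem \ref{FIRSTORDERCONTINUOUS}.
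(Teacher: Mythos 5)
Your proposal matches the paper's own proof: the paper likewise takes the eigenvalues $\lambda_1,\dots,\lambda_{\dim(\text{Sym}^m(\mathcal{X}))}$ of $S_{f,\tau}$ on $\text{Sym}^m(\mathcal{X})$, uses the trace identities $\operatorname{Tra}(S_{f,\tau})=\int_\Omega f_\alpha^{\otimes m}(\tau_\alpha^{\otimes m})\,d\mu(\alpha)$ and $\operatorname{Tra}(S^2_{f,\tau})=\int_\Omega\int_\Omega f_\alpha^{\otimes m}(\tau_\beta^{\otimes m})f_\beta^{\otimes m}(\tau_\alpha^{\otimes m})\,d\mu(\alpha)\,d\mu(\beta)$ together with $\bigl(\sum_l\lambda_l\bigr)^2\le\binom{d+m-1}{m}\sum_l\lambda_l^2$, and then splits the double integral over $\Delta$ and $(\Omega\times\Omega)\setminus\Delta$ exactly as you describe. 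Your explicit attention to the measurability and Pettis-integrability of the tensor family is a point the paper leaves implicit, but the argument is the same.
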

 \begin{proof}
Let $\lambda_1, \dots, \lambda_{\text{dim}(\text{Sym}^m(\mathcal{X}))}$ be eigenvalues of $	S_{f, \tau}$. Then 

\begin{align*}
	\left(\int_{\Omega}
	f_\alpha(\tau_\alpha)^m\, d \mu(\alpha)\right)^2	&=\left(\int_{\Omega}
	f_\alpha^{\otimes m}(\tau_\alpha^{\otimes m})\, d \mu(\alpha)\right)^2=	(\operatorname{Tra}(S_{f,\tau}))^2=\left(\sum_{l=1}^{\text{dim}(\text{Sym}^m(\mathcal{X}))}
	\lambda_l\right)^2\\
	&\leq 
	\text{dim}(\text{Sym}^m(\mathcal{X})) \sum_{l=1}^{\text{dim}(\text{Sym}^m(\mathcal{X}))}
	\lambda_l^2
	={d+m-1 \choose m}\operatorname{Tra}(S^2_{f,\tau})\\
	&={d+m-1 \choose m}\int_{\Omega}\int_{\Omega}f_\alpha^{\otimes m}(\tau_\beta^{\otimes m})f_\beta^{\otimes m}(\tau_\alpha^{\otimes m})\, d \mu(\alpha)\, d \mu(\beta)
	\\
	&={d+m-1 \choose m}\int_{\Omega}\int_{\Omega}f_\alpha(\tau_\beta)^mf_\beta(\tau_\alpha)^m\, d \mu(\alpha)\, d \mu(\beta).
\end{align*} 	
and 
 \begin{align*}
	&\frac{1}{{d+m-1 \choose m}}  \left(\int_{\Omega}
	f_\alpha(\tau_\alpha)^m\, d \mu(\alpha)\right)^2=\int_{\Omega}\int_{\Omega}f_\alpha(\tau_\beta)^mf_\beta(\tau_\alpha)^m\, d \mu(\alpha)\, d \mu(\beta)\\
	&=	\int_{\Omega\times\Omega}f_\alpha(\tau_\beta)^mf_\beta(\tau_\alpha)^m\, d(\mu\times\mu)(\alpha,\beta)\\
	&=\int_{(\Omega\times\Omega)\setminus \Delta}f_\alpha(\tau_\beta)^mf_\beta(\tau_\alpha)^m\, d(\mu\times\mu)(\alpha,\beta)+\int_{\Delta}f_\alpha(\tau_\alpha)^{2m}\, d(\mu\times\mu)(\alpha,\alpha)\\
	&\leq \int_{(\Omega\times\Omega)\setminus \Delta}|f_\alpha(\tau_\beta)f_\beta(\tau_\alpha)|^m\, d(\mu\times\mu)(\alpha,\beta)+\int_{\Delta}|f_\alpha(\tau_\alpha)|^{2m}\, d(\mu\times\mu)(\alpha,\alpha)\\
	&\leq (\mu\times\mu)((\Omega\times\Omega)\setminus\Delta) 	\sup _{\alpha, \beta \in \Omega, \alpha\neq \beta}|f_\alpha(\tau_\beta)f_\beta(\tau_\alpha)|^m+\int_{\Delta}|f_\alpha(\tau_\alpha)|^{2m}\, d(\mu\times\mu)(\alpha,\alpha).
\end{align*}
 \end{proof}
 \begin{corollary}
  	Theorem  \ref{WELCHBANACH}    is a corollary of Theorem 	\ref{CONTINUOUSWELCHMAINSECOND}.
  \end{corollary}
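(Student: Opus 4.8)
The plan is to obtain Theorem~\ref{WELCHBANACH} by specializing Theorem~\ref{CONTINUOUSWELCHMAINSECOND} to the counting measure on a finite index set. Given the data $\{\tau_j\}_{j=1}^n\subseteq\mathcal{X}$ and $\{f_j\}_{j=1}^n\subseteq\mathcal{X}^*$ of Theorem~\ref{WELCHBANACH}, put $\Omega\coloneqq\{1,2,\dots,n\}$, let $\mu$ be the counting measure on $\Omega$, and reindex $(\{f_\alpha\}_{\alpha\in\Omega},\{\tau_\alpha\}_{\alpha\in\Omega})$ accordingly. First I would check that all the hypotheses of Theorem~\ref{CONTINUOUSWELCHMAINSECOND} hold in this situation. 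Here $\mu$ is finite, hence $\sigma$-finite, with $\mu(\Omega)=n$; since $\Omega\times\Omega$ carries the discrete $\sigma$-algebra, the diagonal $\Delta$ is measurable with $(\mu\times\mu)(\Delta)=n$ and $(\mu\times\mu)((\Omega\times\Omega)\setminus\Delta)=n^2-n$; finiteness of $\Omega\times\Omega$ makes every function on it integrable, so $\int_{\Omega\times\Omega}|f_\alpha(\tau_\beta)f_\beta(\tau_\alpha)|^m\,d(\mu\times\mu)(\alpha,\beta)=\sum_{j,k=1}^n|f_j(\tau_k)f_k(\tau_j)|^m<\infty$, and $(\{f_\alpha\}_{\alpha\in\Omega},\{\tau_\alpha\}_{\alpha\in\Omega})$ is trivially a continuous Bessel family for $\mathcal{X}$.

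The second step is the bookkeeping that the continuous objects collapse to the discrete ones. Integration against the counting measure is summation, so
\[
\int_\Omega f_\alpha^{\otimes m}(x)\,\tau_\alpha^{\otimes m}\,d\mu(\alpha)=\sum_{j=1}^n f_j^{\otimes m}(x)\,\tau_j^{\otimes m},\qquad x\in\text{Sym}^m(\mathcal{X}),
\]
and therefore the operator $S_{f,\tau}$ appearing in the hypothesis of Theorem~\ref{CONTINUOUSWELCHMAINSECOND} is literally the operator $S_{f,\tau}$ of Theorem~\ref{WELCHBANACH}, so it is diagonalizable with non-negative eigenvalues by assumption. Similarly $\int_\Omega f_\alpha(\tau_\alpha)^m\,d\mu(\alpha)=\sum_{j=1}^n f_j(\tau_j)^m$, $\int_\Delta|f_\alpha(\tau_\alpha)|^{2m}\,d(\mu\times\mu)(\alpha,\alpha)=\sum_{j=1}^n|f_j(\tau_j)|^{2m}$, and $\int_{\Omega\times\Omega}f_\alpha(\tau_\beta)^m f_\beta(\tau_\alpha)^m\,d(\mu\times\mu)(\alpha,\beta)=\sum_{j,k=1}^n f_j(\tau_k)^m f_k(\tau_j)^m$; and because $\Omega$ is finite, the suprema over $\alpha\neq\beta$ are attained and coincide with the corresponding maxima over $j\neq k$.

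Substituting these identities into the conclusion of Theorem~\ref{CONTINUOUSWELCHMAINSECOND} yields, line by line, the conclusion of Theorem~\ref{WELCHBANACH}: Inequality~(\ref{2}) becomes Inequality~(\ref{EQUALITYIFFFRAMETIGHT}), while Inequalities~(\ref{WELCHCONTINUOUS4}) and~(\ref{WELCHCONTINUOUS5}) become the two ``$\max$'' estimates once $(\mu\times\mu)((\Omega\times\Omega)\setminus\Delta)$ is replaced by $n^2-n$. Specializing to $f_j(\tau_j)=1$ for all $j$ turns $\mu(\Omega)^2/{d+m-1\choose m}-(\mu\times\mu)(\Delta)$ into $n^2/{d+m-1\choose m}-n$, and dividing by $n^2-n$ recovers $\tfrac{1}{n-1}\big[\tfrac{n}{{d+m-1\choose m}}-1\big]$, the higher order discrete Welch bound. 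The equality clause transfers as well: equality in~(\ref{2}) holds iff $(\{f_\alpha\}_{\alpha\in\Omega},\{\tau_\alpha\}_{\alpha\in\Omega})$ is a tight continuous ASF for $\text{Sym}^m(\mathcal{X})$, which under the counting measure says precisely that $\sum_{j=1}^n f_j^{\otimes m}(\cdot)\,\tau_j^{\otimes m}$ is a nonzero scalar multiple of $I_{\text{Sym}^m(\mathcal{X})}$, i.e. $(\{f_j\}_{j=1}^n,\{\tau_j\}_{j=1}^n)$ is a tight ASF for $\text{Sym}^m(\mathcal{X})$. I do not expect an analytic obstacle anywhere; the only point requiring (minor) care is verifying that the frame operator and every integral expression in Theorem~\ref{CONTINUOUSWELCHMAINSECOND} reduce correctly under the counting measure, since finiteness of $\Omega$ makes all measurability and integrability hypotheses automatic.
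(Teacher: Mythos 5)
Your proposal is correct and matches the paper's argument exactly: the paper's entire proof is ``Take $\Omega=\{1,\dots,n\}$ and $\mu$ as the counting measure,'' and you have simply spelled out the routine verifications that the paper leaves implicit. Nothing more is needed.
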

  \begin{proof}
  	Take $\Omega=\{1,\dots,n\} $ and $\mu$ as the counting measure.
  \end{proof}
 \begin{theorem}
 Theorem 	\ref{CONTINUOUSHILBERT} is a corollary of Theorem 	\ref{CONTINUOUSWELCHMAINSECOND}.
 \end{theorem}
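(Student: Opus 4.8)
The plan is to obtain Theorem \ref{CONTINUOUSHILBERT} by instantiating Theorem \ref{CONTINUOUSWELCHMAINSECOND} with $\mathcal{X}=\mathcal{H}$ and with the linear functionals induced by the inner product. So, let $\{\tau_\alpha\}_{\alpha\in\Omega}$ be a normalized continuous Bessel family for $\mathcal{H}$ with $\dim\mathcal{H}=d$ and $\Delta$ measurable, and for each $\alpha\in\Omega$ put $f_\alpha\colon\mathcal{H}\ni h\mapsto\langle h,\tau_\alpha\rangle\in\mathbb{K}$. First I would check that $(\{f_\alpha\}_{\alpha\in\Omega},\{\tau_\alpha\}_{\alpha\in\Omega})$ is a continuous approximate Bessel family for $\mathcal{H}$: for $h\in\mathcal{H}$ and $\phi=\langle\cdot,g\rangle\in\mathcal{H}^*$ the map $\alpha\mapsto f_\alpha(h)\phi(\tau_\alpha)=\langle h,\tau_\alpha\rangle\langle\tau_\alpha,g\rangle$ is measurable by weak measurability of $\{\tau_\alpha\}$ and integrable by the Cauchy--Schwarz inequality together with the Bessel bound, and the resulting frame operator is the usual continuous Bessel operator $h\mapsto\int_\Omega\langle h,\tau_\alpha\rangle\tau_\alpha\,d\mu(\alpha)$. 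Since $\{\tau_\alpha\}$ is normalized, $f_\alpha(\tau_\alpha)=\langle\tau_\alpha,\tau_\alpha\rangle=1$ for all $\alpha\in\Omega$.

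Next, fix $m\in\mathbb{N}$ and equip $\text{Sym}^m(\mathcal{H})$ with the inner product induced from $\mathcal{H}$, so that $f_\alpha^{\otimes m}$ is the functional $x\mapsto\langle x,\tau_\alpha^{\otimes m}\rangle$ and the operator appearing in Theorem \ref{CONTINUOUSWELCHMAINSECOND} is $S_{f,\tau}\colon x\mapsto\int_\Omega\langle x,\tau_\alpha^{\otimes m}\rangle\tau_\alpha^{\otimes m}\,d\mu(\alpha)$. Using $|\langle\xi,\tau_\alpha\rangle|\le\|\tau_\alpha\|=1$ for unit vectors $\xi$ and Cauchy--Schwarz against the Bessel bound, the defining weak integrals converge on the spanning set $\{\xi^{\otimes m}:\xi\in\mathcal{H}\}$ and hence on all of $\text{Sym}^m(\mathcal{H})$, so $S_{f,\tau}$ is a well-defined bounded operator; it is positive and self-adjoint since $\langle S_{f,\tau}x,x\rangle=\int_\Omega|\langle x,\tau_\alpha^{\otimes m}\rangle|^2\,d\mu(\alpha)\ge 0$, hence orthogonally diagonalizable with non negative eigenvalues $\lambda_1,\dots,\lambda_{\dim\text{Sym}^m(\mathcal{H})}$. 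Moreover, by Theorem \ref{TRACEFORMULA} applied in $\text{Sym}^m(\mathcal{H})$, the integrand equals $f_\alpha^{\otimes m}(\tau_\beta^{\otimes m})f_\beta^{\otimes m}(\tau_\alpha^{\otimes m})=\langle\tau_\beta,\tau_\alpha\rangle^m\langle\tau_\alpha,\tau_\beta\rangle^m=|\langle\tau_\alpha,\tau_\beta\rangle|^{2m}$, which is nonnegative, and $\int_{\Omega\times\Omega}|\langle\tau_\alpha,\tau_\beta\rangle|^{2m}\,d(\mu\times\mu)=\operatorname{Tra}(S_{f,\tau}^2)=\sum_l\lambda_l^2<\infty$, so the integrability hypothesis of Theorem \ref{CONTINUOUSWELCHMAINSECOND} holds automatically. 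Thus all of its hypotheses are verified.

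Finally I would translate the conclusions. Since $f_\alpha(\tau_\alpha)=1$, one has $\int_\Omega f_\alpha(\tau_\alpha)^m\,d\mu=\mu(\Omega)$ and $\int_\Delta|f_\alpha(\tau_\alpha)|^{2m}\,d(\mu\times\mu)=(\mu\times\mu)(\Delta)$, while Theorem \ref{SYMMETRICTENSORDIMENSION} gives $\dim\text{Sym}^m(\mathcal{H})=\binom{d+m-1}{m}$; and $f_\alpha(\tau_\beta)^m f_\beta(\tau_\alpha)^m=|\langle\tau_\alpha,\tau_\beta\rangle|^{2m}$, $|f_\alpha(\tau_\beta)|^{2m}=|\langle\tau_\alpha,\tau_\beta\rangle|^{2m}$. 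Substituting these identities into Inequalities (\ref{2}), (\ref{WELCHCONTINUOUS4}) and (\ref{WELCHCONTINUOUS5}) reproduces, respectively, the integral Welch bound, the higher order continuous Welch bound, and (at $m=1$) the first order continuous Welch bound of Theorem \ref{CONTINUOUSHILBERT}. I expect the only step requiring genuine attention to be the verification that the symmetric-tensor operator $S_{f,\tau}$ is well defined and positive self-adjoint on $\text{Sym}^m(\mathcal{H})$ — equivalently, that $\{\tau_\alpha^{\otimes m}\}_{\alpha\in\Omega}$ is still a continuous Bessel family after tensoring — which hinges on the elementary pointwise estimate $|\langle\xi,\tau_\alpha\rangle|\le 1$; everything else is bookkeeping about how the induced inner product on symmetric tensors interacts with the functionals $f_\alpha=\langle\cdot,\tau_\alpha\rangle$.
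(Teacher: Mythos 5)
Your proposal is correct and follows exactly the same route as the paper: define $f_\alpha = \langle \cdot, \tau_\alpha\rangle$ and apply Theorem \ref{CONTINUOUSWELCHMAINSECOND}. The paper's own proof is only a two-line sketch of this instantiation, whereas you additionally verify the Bessel, positivity, and integrability hypotheses on $\text{Sym}^m(\mathcal{H})$ and carry out the translation of the conclusions, which is a welcome (and accurate) filling-in of details the paper leaves implicit.
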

\begin{proof}
Let $\{\tau_\alpha\}_{\alpha\in \Omega}$ be a 	normalized continuous Bessel  family for $\mathcal{H}$ of dimension $d$. We define 
\begin{align*}
	f_\alpha:\mathcal{H}\ni h \mapsto \langle h, \tau_\alpha \rangle \in \mathbb{K}, \quad \forall \alpha \in \Omega
\end{align*}
and apply 	Theorem 	\ref{CONTINUOUSWELCHMAINSECOND}.
\end{proof} 
  As  observed in \cite{MAHESHKRISHNA}, we again observe  that given a measure space $\Omega$, the diagonal $\Delta$ need not be measurable (see \cite{DRAVECKY}).  
  \begin{question}
  	\textbf{Classify measure spaces $(\Omega, \mu) $ such that Theorem \ref{CONTINUOUSWELCHMAINSECOND}  holds?} In other words, \textbf{given a measure space $(\Omega, \mu) $, does the validity of Inequality (\ref{2}) or Inequality (\ref{WELCHCONTINUOUS4})  or Inequality (\ref{WELCHCONTINUOUS5}) implies  conditions on meausre space $(\Omega, \mu) $, say $\sigma$-finite?}
  \end{question}
 Now we derive continuous versions of Theorem   \ref{DISCRETEARBITRARY1}  and Theorem \ref{DISCRETEARBITRARY2}.
  \begin{theorem}
  	Let $(\Omega, \mu)$ be a $\sigma$-finite measure space and $(\{f_\alpha\}_{\alpha\in \Omega}, \{\tau_\alpha\}_{\alpha\in \Omega})$ be a 	 continuous Bessel  family for finite dimensional Banach space $\mathcal{X}$ of dimension $d$. If the diagonal $\Delta\coloneqq \{(\alpha, \alpha):\alpha \in \Omega\}$ is measurable in the measure space $\Omega\times \Omega$
  	and the operator $	S_{f, \tau}:\mathcal{X}\ni x \mapsto S_{f, \tau}x\coloneqq \int_{\Omega}f_\alpha(x)\tau_\alpha \, d\mu (\alpha) \in\mathcal{X}$  is diagonalizable and its eigenvalues are all non negative,  then 
  	\begin{align*}
  		\frac{1}{d}	\operatorname{Tra}(S_{f,\tau})^r\geq 	\left(\frac{1}{d}\int_\Omega  f_\alpha(\tau_\alpha)\, d \mu(\alpha)\right)^r ,\quad \forall r \in [1, \infty)
  	\end{align*}	
  	and 
  	\begin{align*}
  		\frac{1}{d}	\operatorname{Tra}(S_{f,\tau})^r\leq 	\left(\frac{1}{d}\int_\Omega  f_\alpha(\tau_\alpha)\, d \mu(\alpha)\right)^r ,\quad \forall r \in (0,1).
  	\end{align*}
  In particular, if $f_\alpha(\tau_\alpha)=1$ for all $\alpha \in \Omega$, then 
  	\begin{align*}
  	\frac{1}{\mu(\Omega)}	\operatorname{Tra}(S_{f,\tau})^r\geq \left(\frac{\mu(\Omega)}{d}\right)^{r-1} ,\quad \forall r \in [1, \infty)
  \end{align*}	
  and 
  \begin{align*}
  	\frac{1}{\mu(\Omega)}	\operatorname{Tra}(S_{f,\tau})^r\leq \left(\frac{\mu(\Omega)}{d}\right)^{r-1} ,\quad \forall r \in (0,1).
  \end{align*}
  \end{theorem}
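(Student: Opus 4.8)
The plan is to mimic the proof of Theorem \ref{DISCRETEARBITRARY1}, replacing the $n$-term sum that computes $\operatorname{Tra}(S_{f,\tau})$ there by the integral $\int_\Omega f_\alpha(\tau_\alpha)\,d\mu(\alpha)$ supplied by Theorem \ref{TRACEFORMULA}. First I would record the two facts I need about the frame operator. Since $(\{f_\alpha\}_{\alpha\in\Omega},\{\tau_\alpha\}_{\alpha\in\Omega})$ is a continuous Bessel family for the $d$-dimensional space $\mathcal{X}$, the operator $S_{f,\tau}$ is a bounded operator on a finite-dimensional space, so Theorem \ref{TRACEFORMULA} applies and gives $\operatorname{Tra}(S_{f,\tau})=\int_\Omega f_\alpha(\tau_\alpha)\,d\mu(\alpha)$ (in particular this integral is finite). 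By hypothesis $S_{f,\tau}$ is diagonalizable with eigenvalues $\lambda_1,\dots,\lambda_d\ge 0$ (listed with multiplicity), so the power $S_{f,\tau}^r$ is well defined by functional calculus for every $r>0$, with $\operatorname{Tra}(S_{f,\tau}^r)=\sum_{k=1}^d\lambda_k^r$ and $\sum_{k=1}^d\lambda_k=\operatorname{Tra}(S_{f,\tau})=\int_\Omega f_\alpha(\tau_\alpha)\,d\mu(\alpha)$.

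The main step is Jensen's inequality applied to the finite list $\lambda_1,\dots,\lambda_d$ with uniform weights $1/d$. For $r\in[1,\infty)$ the map $t\mapsto t^r$ is convex on $[0,\infty)$, so $\bigl(\tfrac1d\sum_{k=1}^d\lambda_k\bigr)^r\le\tfrac1d\sum_{k=1}^d\lambda_k^r$; rewriting both sides with the trace identities above yields $\bigl(\tfrac1d\int_\Omega f_\alpha(\tau_\alpha)\,d\mu(\alpha)\bigr)^r\le\tfrac1d\operatorname{Tra}(S_{f,\tau}^r)$, which is the first claimed inequality. For $r\in(0,1)$ the map $t\mapsto t^r$ is concave on $[0,\infty)$, so the same Jensen argument reverses and gives the second inequality. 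This is exactly the mechanism of the proof of Theorem \ref{DISCRETEARBITRARY1}, with the integral in place of the sum.

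Finally, for the ``in particular'' statements I would substitute $f_\alpha(\tau_\alpha)=1$, so that $\operatorname{Tra}(S_{f,\tau})=\int_\Omega 1\,d\mu(\alpha)=\mu(\Omega)$. Plugging this into the first inequality gives $\tfrac1d\operatorname{Tra}(S_{f,\tau}^r)\ge\bigl(\tfrac{\mu(\Omega)}{d}\bigr)^r$, hence $\tfrac1{\mu(\Omega)}\operatorname{Tra}(S_{f,\tau}^r)\ge d\cdot\tfrac1{\mu(\Omega)}\bigl(\tfrac{\mu(\Omega)}{d}\bigr)^r=\bigl(\tfrac{\mu(\Omega)}{d}\bigr)^{r-1}$, and the case $r\in(0,1)$ is identical with the inequality reversed.

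I do not expect a serious obstacle here. The only points that need a word of justification are that the integral $\int_\Omega f_\alpha(\tau_\alpha)\,d\mu(\alpha)$ and the traces involved are finite (which follows from the Bessel hypothesis and finite-dimensionality via Theorem \ref{TRACEFORMULA}) and that $S_{f,\tau}^r$ is meaningful for non-integer $r$ (which is precisely where diagonalizability together with non-negativity of the eigenvalues is used). The measurability assumption on the diagonal $\Delta$ is not actually invoked in this argument; it is retained only to keep the hypotheses uniform with the other continuous results in this section.
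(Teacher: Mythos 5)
Your proposal is correct and follows essentially the same route as the paper's own proof: list the eigenvalues $\lambda_1,\dots,\lambda_d$, apply Jensen's inequality to $t\mapsto t^r$ (convex for $r\geq 1$, concave for $r\in(0,1)$) with uniform weights $1/d$, and convert traces to integrals via Theorem \ref{TRACEFORMULA}. Your version is slightly more careful in spelling out why $S_{f,\tau}^r$ is well defined and in working out the ``in particular'' case, but the mechanism is identical.
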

  \begin{proof}
  	Let $\lambda_1, \dots, \lambda_{d}$ be eigenvalues of $	S_{f,\tau}$. Let $r \in [1, \infty)$. Jensen's inequality	gives 
  	\begin{align*}
  		\left(\frac{1}{d}\sum_{k=1}^d\lambda_k\right)^r\leq \frac{1}{d}\sum_{k=1}^d\lambda_k^r.
  	\end{align*}
  	Therefore
  	\begin{align*}
  	%	\left(\frac{\mu(\Omega)}{d}\right)^r=
  		\left(\frac{1}{d}\int_\Omega  f_\alpha(\tau_\alpha)\, d \mu(\alpha)\right)^r=	\left(\frac{1}{d}\operatorname{Tra}(S_{f,\tau})\right)^r\leq \frac{1}{d}\operatorname{Tra}(	S_{f,\tau})^r.
  	\end{align*}
  	Similarly the case $ r \in (0,1)  $ follows by using Jensen's inequality.
  \end{proof}
  \begin{theorem}\label{PWELCH}
  	Let $2<p<\infty$,  $(\Omega, \mu)$ be a $\sigma$-finite measure space and $(\{f_\alpha\}_{\alpha\in \Omega}, \{\tau_\alpha\}_{\alpha\in \Omega})$ be a 	 continuous Bessel  family for finite dimensional Banach space $\mathcal{X}$ of dimension $d$ such that $f_\alpha(\tau_\alpha)=1$ for all $\alpha \in \Omega$. If the diagonal $\Delta\coloneqq \{(\alpha, \alpha):\alpha \in \Omega\}$ is measurable in the measure space $\Omega\times \Omega$,
  	\begin{align*}
  		\int_{\Omega\times\Omega}|f_\alpha(\tau_\beta)f_\beta(\tau_\alpha)|\, d(\mu\times\mu)(\alpha,\beta)<\infty,
  	\end{align*} 
  	and the operator $	S_{f, \tau}:\mathcal{X}\ni x \mapsto S_{f, \tau}x\coloneqq \int_{\Omega}f_\alpha(x)\tau_\alpha \, d\mu (\alpha) \in\mathcal{X}$ is diagonalizable and its eigenvalues are all non negative,  then 
  	 	\begin{align*}
  		\int_{\Omega\times\Omega}|f_\alpha(\tau_\beta)f_\beta(\tau_\alpha)|^\frac{p}{2}\, d(\mu\times\mu)(\alpha,\beta)&=	\int_{\Omega}\int_{\Omega}|f_\alpha(\tau_\beta)f_\beta(\tau_\alpha)|^\frac{p}{2}\, d \mu(\alpha)\, d \mu(\beta)\\
  		&\geq  	\frac{1}{(\mu\times \mu)((\Omega\times\Omega)\setminus\Delta)^{\frac{p}{2}-1}}	\left(\frac{\mu(\Omega)^2}{d}-(\mu\times\mu)(\Delta)\right)^\frac{p}{2}+(\mu\times\mu)(\Delta).
  	\end{align*}
  \end{theorem}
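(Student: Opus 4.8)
The plan is to run the argument of Theorem~\ref{DISCRETEARBITRARY2} in the continuous setting: finite sums become integrals against $\mu\times\mu$, and the off-diagonal index set $\{(j,k):j\neq k\}$ is replaced by $(\Omega\times\Omega)\setminus\Delta$. First I would dispose of the measure-theoretic preliminaries. Since $f_\alpha(\tau_\alpha)=1$ for all $\alpha$, the integrand $|f_\alpha(\tau_\beta)f_\beta(\tau_\alpha)|$ equals $1$ on $\Delta$; hence the standing hypothesis $\int_{\Omega\times\Omega}|f_\alpha(\tau_\beta)f_\beta(\tau_\alpha)|\,d(\mu\times\mu)<\infty$ forces $(\mu\times\mu)(\Delta)<\infty$, and $\sigma$-finiteness together with measurability of $\Delta$ lets me split any integral over $\Omega\times\Omega$ into its restriction to $\Delta$ and to $(\Omega\times\Omega)\setminus\Delta$ and to invoke Fubini freely, exactly as in the proof of Theorem~\ref{CONTINUOUSWELCHMAINSECOND}.

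Next I would invoke Theorem~\ref{FIRSTORDERCONTINUOUS}. With $f_\alpha(\tau_\alpha)=1$, Inequality~(\ref{1}) says $\int_{\Omega\times\Omega}f_\alpha(\tau_\beta)f_\beta(\tau_\alpha)\,d(\mu\times\mu)\geq \mu(\Omega)^2/d$. Subtracting the diagonal contribution $\int_{\Delta}f_\alpha(\tau_\alpha)^2\,d(\mu\times\mu)=(\mu\times\mu)(\Delta)$ and bounding the remaining real integrand by its modulus yields
\[
\int_{(\Omega\times\Omega)\setminus\Delta}|f_\alpha(\tau_\beta)f_\beta(\tau_\alpha)|\,d(\mu\times\mu)(\alpha,\beta)\;\geq\;\frac{\mu(\Omega)^2}{d}-(\mu\times\mu)(\Delta).
\]
Now apply H\"older's inequality on the measure space $(\Omega\times\Omega)\setminus\Delta$ with exponents $p/2$ and its conjugate $q=p/(p-2)$ (so $1/q=1-2/p$), which bounds the left side above by
\[
\left(\int_{(\Omega\times\Omega)\setminus\Delta}|f_\alpha(\tau_\beta)f_\beta(\tau_\alpha)|^{\frac{p}{2}}\,d(\mu\times\mu)\right)^{\frac{2}{p}}\bigl((\mu\times\mu)((\Omega\times\Omega)\setminus\Delta)\bigr)^{1-\frac{2}{p}}.
\]
Raising the resulting inequality to the power $p/2$ and rearranging gives
\[
\int_{(\Omega\times\Omega)\setminus\Delta}|f_\alpha(\tau_\beta)f_\beta(\tau_\alpha)|^{\frac{p}{2}}\,d(\mu\times\mu)\;\geq\;\frac{1}{(\mu\times\mu)((\Omega\times\Omega)\setminus\Delta)^{\frac{p}{2}-1}}\left(\frac{\mu(\Omega)^2}{d}-(\mu\times\mu)(\Delta)\right)^{\frac{p}{2}}.
\]
Finally I would add back the diagonal term: since $|f_\alpha(\tau_\beta)f_\beta(\tau_\alpha)|^{p/2}=|f_\alpha(\tau_\alpha)|^{p}=1$ on $\Delta$, we have $\int_{\Delta}|f_\alpha(\tau_\beta)f_\beta(\tau_\alpha)|^{p/2}\,d(\mu\times\mu)=(\mu\times\mu)(\Delta)$, and combining this with the previous display produces the claimed lower bound for $\int_{\Omega\times\Omega}|f_\alpha(\tau_\beta)f_\beta(\tau_\alpha)|^{p/2}\,d(\mu\times\mu)$, with the iterated-integral form following from Fubini.

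The main obstacle is entirely measure-theoretic rather than algebraic: one must justify the splitting along $\Delta$ and the use of Fubini under mere $\sigma$-finiteness, and handle the degenerate cases where $(\mu\times\mu)((\Omega\times\Omega)\setminus\Delta)$ is $0$ or $\infty$. When it is $0$, the first-order bound forces $\mu(\Omega)^2/d\le(\mu\times\mu)(\Delta)$ and the asserted inequality is vacuous; when it is $\infty$ one must read the H\"older estimate in the correct direction so that the final rearrangement still goes through. The exponent bookkeeping ($p/2$, its conjugate $q=p/(p-2)$, and the auxiliary $r=2p/(p-2)$) is routine and parallels the proof of Theorem~\ref{DISCRETEARBITRARY2} verbatim.
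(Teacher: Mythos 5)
Your proposal is correct and follows essentially the same route as the paper's own proof: the first order continuous Welch bound restricted to the off-diagonal set, H\"older's inequality on $(\Omega\times\Omega)\setminus\Delta$ with exponents $p/2$ and $p/(p-2)$, and then adding back the diagonal contribution $(\mu\times\mu)(\Delta)$. Your extra attention to the degenerate cases where $(\mu\times\mu)((\Omega\times\Omega)\setminus\Delta)$ is $0$ or $\infty$ is a point the paper passes over silently, but it does not change the argument.
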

  \begin{proof}
  	Define $r\coloneqq 2p/(p-2)$ and $q$ be the conjugate index of $p/2$. Then $q=r/2$. Using Theorem \ref{FIRSTORDERCONTINUOUS} and Holder's inequality, we  have 
  	
  	\begin{align*}
  		\frac{\mu(\Omega)^2}{d}-(\mu\times\mu)(\Delta)&\leq	\int_{(\Omega\times\Omega)\setminus\Delta}|f_\alpha(\tau_\beta)f_\beta(\tau_\alpha)|\, d(\mu\times\mu)(\alpha,\beta)\\
  		&\leq \left(\int_{(\Omega\times\Omega)\setminus\Delta}|f_\alpha(\tau_\beta)f_\beta(\tau_\alpha)|^\frac{p}{2}\, d(\mu\times\mu)(\alpha,\beta)\right)^\frac{2}{p}\left(\int_{(\Omega\times\Omega)\setminus\Delta}d(\mu\times\mu)(\alpha,\beta)\right)^\frac{1}{q}\\
  		&=\left(\int_{(\Omega\times\Omega)\setminus\Delta}|f_\alpha(\tau_\beta)f_\beta(\tau_\alpha)|^\frac{p}{2}\, d(\mu\times\mu)(\alpha,\beta)\right)^\frac{2}{p}(\mu\times \mu)((\Omega\times\Omega)\setminus\Delta)^\frac{1}{q}\\	
  		&=\left(\int_{(\Omega\times\Omega)\setminus\Delta}|f_\alpha(\tau_\beta)f_\beta(\tau_\alpha)|^\frac{p}{2}\, d(\mu\times\mu)(\alpha,\beta)\right)^\frac{2}{p}(\mu\times \mu)((\Omega\times\Omega)\setminus\Delta)^\frac{2}{r}\\	
  		&=\left(\int_{(\Omega\times\Omega)\setminus\Delta}|f_\alpha(\tau_\beta)f_\beta(\tau_\alpha)|^\frac{p}{2}\, d(\mu\times\mu)(\alpha,\beta)\right)^\frac{2}{p}(\mu\times \mu)((\Omega\times\Omega)\setminus\Delta)^\frac{p-2}{p}	
  	\end{align*}
  	which gives 
  	\begin{align*}
  		\left(\frac{\mu(\Omega)^2}{d}-(\mu\times\mu)(\Delta)\right)^\frac{p}{2}\leq \left(\int_{(\Omega\times\Omega)\setminus\Delta}|f_\alpha(\tau_\beta)f_\beta(\tau_\alpha)|^\frac{p}{2}\, d(\mu\times\mu)(\alpha,\beta)\right)(\mu\times \mu)((\Omega\times\Omega)\setminus\Delta)^{\frac{p}{2}-1}.
  	\end{align*}
  	Therefore 
  	\begin{align*}
  		&\frac{1}{(\mu\times \mu)((\Omega\times\Omega)\setminus\Delta)^{\frac{p}{2}-1}}	\left(\frac{\mu(\Omega)^2}{d}-(\mu\times\mu)(\Delta)\right)^\frac{p}{2}+(\mu\times\mu)(\Delta)\\
  		&~=\frac{1}{(\mu\times \mu)((\Omega\times\Omega)\setminus\Delta)^{\frac{p}{2}-1}}	\left(\frac{\mu(\Omega)^2}{d}-(\mu\times\mu)(\Delta)\right)^\frac{p}{2}+\int_{\Delta}|f_\alpha(\tau_\beta)f_\beta(\tau_\alpha)|^\frac{p}{2}\, d(\mu\times\mu)(\alpha,\beta)\\
  		&~\leq \int_{(\Omega\times\Omega)\setminus\Delta}|f_\alpha(\tau_\beta)f_\beta(\tau_\alpha)|^\frac{p}{2}\, d(\mu\times\mu)(\alpha,\beta)+\int_{\Delta}|f_\alpha(\tau_\alpha)f_\alpha(\tau_\alpha)|^\frac{p}{2}\, d(\mu\times\mu)(\alpha,\alpha)\\
  		&~=\int_{\Omega\times\Omega}|f_\alpha(\tau_\beta)f_\beta(\tau_\alpha)|^\frac{p}{2}\, d(\mu\times\mu)(\alpha,\beta).
  	\end{align*}
  \end{proof}
   In the continuous case, Theorem \ref{LEVENSTEINBOUND}   leads to the following problem.
  \begin{question}
 \textbf{ Whether there is a  continuous version of Theorem \ref{LEVENSTEINBOUND} for Banach spaces?. In particular, does there exists a continuous version of }
  	\begin{enumerate}[\upshape(i)]
  		\item  \textbf{ Bukh-Cox bound for Banach spaces?}
  		\item  \textbf{ Orthoplex/Rankin bound for Banach spaces?}
  		\item  \textbf{ Levenstein bound for Banach spaces?}
  		\item  \textbf{ Exponential bound for Banach spaces?}
  	\end{enumerate}		
  \end{question}

\section{Applications of continuous Welch bounds for Banach spaces}
 Here we list continuous versions of corresponding concepts, results and open problems stated in   Section \ref{SECTIONTWO}.  Throughout this section, $(\Omega, \mu)$ is a $\sigma$-finite measure space. We further assume that the diagonal $\Delta$ is measurable.
 \begin{definition}\label{CRMSBANACH}
 Let  $ (\{f_\alpha\}_{\alpha\in \Omega}, \{\tau_\alpha\}_{\alpha\in \Omega})$ be a 	 continuous Bessel  family for  a finite dimensional Banach space $\mathcal{X}$ of dimension $d$   satisfying  $f_\alpha(\tau_\alpha)=1$ for all $\alpha \in \Omega$. Assume that  the  frame operator 
 	$	S_{f, \tau}$  is diagonalizable and its eigenvalues are all non negative. Also assume that 
 	\begin{align*}
 		\int_{\Omega\times\Omega}|f_\alpha(\tau_\beta)f_\beta(\tau_\alpha)|\, d(\mu\times\mu)(\alpha,\beta)<\infty.
 	\end{align*} 
 	  We define the \textbf{ continuous root-mean-square (RMS) absolute cross relation} of  $ (\{f_\alpha\}_{\alpha\in \Omega}, \{\tau_\alpha\}_{\alpha\in \Omega})$   as 
 	\begin{align*}
 		I_{\text{CRMS}} (\{f_\alpha\}_{\alpha\in \Omega}, \{\tau_\alpha\}_{\alpha\in \Omega})\coloneqq \left(\frac{1}{(\mu\times\mu)((\Omega\times\Omega)\setminus\Delta)}\int_{(\Omega\times\Omega)\setminus\Delta}f_\alpha(\tau_\beta)f_\beta(\tau_\alpha)\, d(\mu\times\mu)(\alpha,\beta)\right)^\frac{1}{2}.
 	\end{align*}
 \end{definition}
	 \begin{theorem}
 	Let $ (\{f_\alpha\}_{\alpha\in \Omega}, \{\tau_\alpha\}_{\alpha\in \Omega})$ be as in Definition \ref{CRMSBANACH}. Then
 	\begin{align*}
 	\sup_{\alpha, \beta \in \Omega, \alpha\neq \beta}|f_\alpha(\tau_\beta)| \geq 	I_{\text{CRMS}} (\{f_\alpha\}_{\alpha\in \Omega}, \{\tau_\alpha\}_{\alpha\in \Omega})\geq \left(\frac{1}{(\mu\times\mu)((\Omega\times\Omega)\setminus\Delta)}\left[\frac{\mu(\Omega)^2}{d}-(\mu\times\mu)(\Delta)\right]\right)^\frac{1}{2}.
 	\end{align*}
 \end{theorem}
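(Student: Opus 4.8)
The plan is to obtain this as an immediate corollary of the first order continuous Welch bound for Banach spaces, Theorem \ref{FIRSTORDERCONTINUOUS}, applied in the case $f_\alpha(\tau_\alpha)=1$ for all $\alpha\in\Omega$. By Definition \ref{CRMSBANACH} the square of $I_{\text{CRMS}}$ is exactly the normalized off-diagonal integral $\frac{1}{(\mu\times\mu)((\Omega\times\Omega)\setminus\Delta)}\int_{(\Omega\times\Omega)\setminus\Delta}f_\alpha(\tau_\beta)f_\beta(\tau_\alpha)\,d(\mu\times\mu)(\alpha,\beta)$, so the two asserted inequalities amount to a lower and an upper estimate for this integral, and all the hypotheses needed to invoke Theorem \ref{FIRSTORDERCONTINUOUS} ($\sigma$-finiteness of $(\Omega,\mu)$, measurability of $\Delta$, finiteness of $\int_{\Omega\times\Omega}|f_\alpha(\tau_\beta)f_\beta(\tau_\alpha)|\,d(\mu\times\mu)$, and diagonalizability of $S_{f,\tau}$ with nonnegative eigenvalues) are already built into Definition \ref{CRMSBANACH} together with the standing assumptions of this section.

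For the lower bound I would start from Inequality (\ref{1}), which under $f_\alpha(\tau_\alpha)=1$ reads $\int_{\Omega\times\Omega}f_\alpha(\tau_\beta)f_\beta(\tau_\alpha)\,d(\mu\times\mu)(\alpha,\beta)\ge \mu(\Omega)^2/d$. Since $\Delta$ is measurable and the product is $(\mu\times\mu)$-integrable, I split $\int_{\Omega\times\Omega}=\int_\Delta+\int_{(\Omega\times\Omega)\setminus\Delta}$; on the diagonal $f_\alpha(\tau_\beta)f_\beta(\tau_\alpha)=f_\alpha(\tau_\alpha)^2=1$, so $\int_\Delta(\,\cdot\,)=(\mu\times\mu)(\Delta)$, giving $\int_{(\Omega\times\Omega)\setminus\Delta}f_\alpha(\tau_\beta)f_\beta(\tau_\alpha)\,d(\mu\times\mu)(\alpha,\beta)\ge \mu(\Omega)^2/d-(\mu\times\mu)(\Delta)$. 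Dividing by $(\mu\times\mu)((\Omega\times\Omega)\setminus\Delta)$ and taking square roots yields the right-hand inequality of the theorem; in particular this also confirms that the radicand defining $I_{\text{CRMS}}$ is nonnegative whenever the stated bound is, so the quantity is well posed.

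For the upper bound I would estimate the same off-diagonal integral from above, pointwise: $\int_{(\Omega\times\Omega)\setminus\Delta}f_\alpha(\tau_\beta)f_\beta(\tau_\alpha)\,d(\mu\times\mu)\le \int_{(\Omega\times\Omega)\setminus\Delta}|f_\alpha(\tau_\beta)f_\beta(\tau_\alpha)|\,d(\mu\times\mu)\le (\mu\times\mu)((\Omega\times\Omega)\setminus\Delta)\,\sup_{\alpha\ne\beta}|f_\alpha(\tau_\beta)f_\beta(\tau_\alpha)|$, and then apply the elementary inequality $|f_\alpha(\tau_\beta)f_\beta(\tau_\alpha)|\le\big(\sup_{\alpha\ne\beta}|f_\alpha(\tau_\beta)|\big)^2$ already used in the proof of Theorem \ref{FIRSTWELCHBANACH} (the two suprema over $f_\alpha(\tau_\beta)$ and over $f_\beta(\tau_\alpha)$ coincide after relabelling $\alpha\leftrightarrow\beta$). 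Dividing by $(\mu\times\mu)((\Omega\times\Omega)\setminus\Delta)$ and taking square roots gives $I_{\text{CRMS}}\le \sup_{\alpha\ne\beta}|f_\alpha(\tau_\beta)|$, completing the chain.

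There is no substantive obstacle: the argument is a transcription of the discrete RMS corollary to the continuous, $\sigma$-finite setting, and the only points that need a word of care are the legitimacy of splitting the integral over the measurable diagonal (guaranteed by the integrability hypothesis via Fubini, exactly as in the proof of Theorem \ref{FIRSTORDERCONTINUOUS}) and the identification of the diagonal contribution as $(\mu\times\mu)(\Delta)$ using $f_\alpha(\tau_\alpha)=1$.
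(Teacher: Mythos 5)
Your proposal is correct and follows exactly the route the paper intends: the paper states this theorem without proof as an immediate consequence of Theorem \ref{FIRSTORDERCONTINUOUS}, and your derivation (splitting the integral over the measurable diagonal where the integrand equals $1$, then bounding the off-diagonal integral above by the supremum of $|f_\alpha(\tau_\beta)f_\beta(\tau_\alpha)|\le\bigl(\sup_{\alpha\neq\beta}|f_\alpha(\tau_\beta)|\bigr)^2$) is precisely the computation already carried out inside the proof of that theorem. No gaps; the only caveat you might add is that the square roots are legitimate only when the radicands are nonnegative, a formulation issue inherited from the paper's definition of $I_{\text{CRMS}}$ rather than a defect of your argument.
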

 \begin{definition}\label{CPFP}
 	 Let  $ (\{f_\alpha\}_{\alpha\in \Omega}, \{\tau_\alpha\}_{\alpha\in \Omega})$ be a 	 continuous Bessel  family for  a finite dimensional Banach space $\mathcal{X}$ of dimension $d$   satisfying  $f_\alpha(\tau_\alpha)=1$ for all $\alpha \in \Omega$. Assume that  the  frame operator 
 	$	S_{f, \tau}$  is diagonalizable and its eigenvalues are all non negative. Also assume that 
 	\begin{align*}
 		\int_{\Omega\times\Omega}|f_\alpha(\tau_\beta)f_\beta(\tau_\alpha)|\, d(\mu\times\mu)(\alpha,\beta)<\infty.
 	\end{align*}  We define the \textbf{continuous pseudo frame potential} of $  (\{f_\alpha\}_{\alpha\in \Omega}, \{\tau_\alpha\}_{\alpha\in \Omega}) $ as 
 	\begin{align*}
 		CPFP  (\{f_\alpha\}_{\alpha\in \Omega}, \{\tau_\alpha\}_{\alpha\in \Omega})\coloneqq  	\int_{\Omega\times\Omega}f_\alpha(\tau_\beta)f_\beta(\tau_\alpha)\, d(\mu\times\mu)(\alpha,\beta).
 	\end{align*}
 \end{definition}
\begin{theorem}\label{CPFPTHEOREM}
 	Let $ (\{f_\alpha\}_{\alpha\in \Omega}, \{\tau_\alpha\}_{\alpha\in \Omega})$ be as in Definition \ref{CPFP}. Then 	
 	\begin{align}\label{CPFPINENQUALITY}
 		\mu(\Omega)^2	\sup_{\alpha, \beta \in \Omega}|f_\alpha(\tau_\beta)| ^2\geq 	CPFP (\{f_\alpha\}_{\alpha\in \Omega}, \{\tau_\alpha\}_{\alpha\in \Omega})	\geq 		\frac{\mu(\Omega)^2}{d}.
 	\end{align}
 \end{theorem}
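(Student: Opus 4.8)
The plan is to deduce both inequalities of (\ref{CPFPINENQUALITY}) directly from the first order continuous Welch bound, Theorem \ref{FIRSTORDERCONTINUOUS}. First I would note that the standing hypotheses collected in Definition \ref{CPFP} --- $(\Omega,\mu)$ $\sigma$-finite, $\Delta$ measurable, $S_{f,\tau}$ diagonalizable with non-negative eigenvalues, $\int_{\Omega\times\Omega}|f_\alpha(\tau_\beta)f_\beta(\tau_\alpha)|\,d(\mu\times\mu)(\alpha,\beta)<\infty$, together with $f_\alpha(\tau_\alpha)=1$ for all $\alpha$ --- are exactly the hypotheses required to invoke Theorem \ref{FIRSTORDERCONTINUOUS}. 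In particular $CPFP(\{f_\alpha\}_{\alpha\in\Omega},\{\tau_\alpha\}_{\alpha\in\Omega})$ is a well-defined finite real number; by the trace identity of Theorem \ref{TRACEFORMULA} it equals $\operatorname{Tra}(S_{f,\tau}^2)$, a sum of squares of the non-negative eigenvalues of $S_{f,\tau}$.

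For the lower bound I would specialize Inequality (\ref{1}) of Theorem \ref{FIRSTORDERCONTINUOUS}: since $f_\alpha(\tau_\alpha)=1$ for every $\alpha$, one has $\int_\Omega f_\alpha(\tau_\alpha)\,d\mu(\alpha)=\mu(\Omega)$, so (\ref{1}) reads $CPFP\ge \mu(\Omega)^2/d$, which is the right-hand inequality of (\ref{CPFPINENQUALITY}).

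For the upper bound I would estimate the integrand pointwise: for every $(\alpha,\beta)\in\Omega\times\Omega$,
\begin{align*}
|f_\alpha(\tau_\beta)f_\beta(\tau_\alpha)|\le \Big(\sup_{\alpha,\beta\in\Omega}|f_\alpha(\tau_\beta)|\Big)\Big(\sup_{\alpha,\beta\in\Omega}|f_\beta(\tau_\alpha)|\Big)=\Big(\sup_{\alpha,\beta\in\Omega}|f_\alpha(\tau_\beta)|\Big)^2,
\end{align*}
where the suprema run over all ordered pairs, including $\alpha=\beta$ (for which the value is $1$), so the bound also holds on the diagonal. Integrating over $\Omega\times\Omega$, and using that $CPFP$ is real so that $CPFP\le \int_{\Omega\times\Omega}|f_\alpha(\tau_\beta)f_\beta(\tau_\alpha)|\,d(\mu\times\mu)(\alpha,\beta)$, I obtain $CPFP\le (\mu\times\mu)(\Omega\times\Omega)\,\sup_{\alpha,\beta\in\Omega}|f_\alpha(\tau_\beta)|^2=\mu(\Omega)^2\,\sup_{\alpha,\beta\in\Omega}|f_\alpha(\tau_\beta)|^2$, the left-hand inequality of (\ref{CPFPINENQUALITY}).

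There is essentially no obstacle here: the statement is a short corollary of Theorem \ref{FIRSTORDERCONTINUOUS} together with a trivial supremum estimate. The one point meriting a sentence of care is the degenerate case $\mu(\Omega)=\infty$, where the upper bound is vacuous while the lower bound still follows verbatim from Theorem \ref{FIRSTORDERCONTINUOUS}; I would dispose of it in one line. Accordingly I would present the whole argument in corollary style, just a few lines long.
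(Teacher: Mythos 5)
Your proposal is correct and matches the paper's (implicit) argument: the theorem is stated in the paper without a written proof, as an immediate consequence of Theorem \ref{FIRSTORDERCONTINUOUS} specialized to $f_\alpha(\tau_\alpha)=1$ for the lower bound, with the upper bound being the trivial pointwise estimate $|f_\alpha(\tau_\beta)f_\beta(\tau_\alpha)|\leq \sup_{\alpha,\beta\in\Omega}|f_\alpha(\tau_\beta)|^2$ integrated over $\Omega\times\Omega$. Your write-up supplies exactly the details the paper leaves to the reader.
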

 
 \begin{question}
 	\textbf{Is  there  a characterization of $ (\{f_\alpha\}_{\alpha\in \Omega}, \{\tau_\alpha\}_{\alpha\in \Omega}) $ by using equality in Inequality (\ref{CPFPINENQUALITY})?}
 \end{question}

 \begin{definition}
 Let  $ (\{f_\alpha\}_{\alpha\in \Omega}, \{\tau_\alpha\}_{\alpha\in \Omega}) $ be a continuous ASF for $\mathcal{X}$ satisfying $f_\alpha(\tau_\alpha)=1$ for all $\alpha \in \Omega$. Assume that  the  frame operator 
 $	S_{f, \tau}$  is diagonalizable and its eigenvalues are all non negative. Also assume that 
 \begin{align*}
 	\int_{\Omega\times\Omega}|f_\alpha(\tau_\beta)f_\beta(\tau_\alpha)|\, d(\mu\times\mu)(\alpha,\beta)<\infty.
 \end{align*} 
	We define the  \textbf{ continuous frame correlation} of $ (\{f_\alpha\}_{\alpha\in \Omega}, \{\tau_\alpha\}_{\alpha\in \Omega})$ as 
 	\begin{align*}
 		\mathcal{M}(\{f_\alpha\}_{\alpha\in \Omega}, \{\tau_\alpha\}_{\alpha\in \Omega})\coloneqq 	\sup_{\alpha, \beta \in \Omega, \alpha\neq \beta}|f_\alpha(\tau_\beta)|.
 	\end{align*}
 \end{definition}
  
 \begin{definition}\label{CONTINUOUSGRASSMANNIANDEFINITION}
 	Let  $ (\{f_\alpha\}_{\alpha\in \Omega}, \{\tau_\alpha\}_{\alpha\in \Omega}) $ be a continuous ASF for $\mathcal{X}$ satisfying $\|f_\alpha\|=1, \|\tau_\alpha\|=1, f_\alpha(\tau_\alpha)=1$ for all $\alpha \in \Omega$. Assume that  the  frame operator 
 	$	S_{f, \tau}$  is diagonalizable and its eigenvalues are all non negative. Also assume that 
 	\begin{align*}
 		\int_{\Omega\times\Omega}|f_\alpha(\tau_\beta)f_\beta(\tau_\alpha)|\, d(\mu\times\mu)(\alpha,\beta)<\infty.
 	\end{align*} 
  Continuous  ASF $ (\{f_\alpha\}_{\alpha\in \Omega}, \{\tau_\alpha\}_{\alpha\in \Omega}) $ is said to be a \textbf{continuous Grassmannian frame} for  $\mathcal{X}$ if 
 	\begin{align*}
 		\mathcal{M}(\{f_\alpha\}_{\alpha\in \Omega}, \{\tau_\alpha\}_{\alpha\in \Omega})=\inf\bigg\{&\mathcal{M}(\{g_\alpha\}_{\alpha\in \Omega}, \{\omega_\alpha\}_{\alpha\in \Omega}): (\{g_\alpha\}_{\alpha\in \Omega}, \{\omega_\alpha\}_{\alpha\in \Omega}) \text{ is a continuous  ASF for }\mathcal{X}  \\
 		& \text{satisfying } \|g_\alpha\|=1, \|\omega_\alpha\|=1, g_\alpha(\omega_\alpha)=1, \forall \alpha \in \Omega \text{, the frame operator }  S_{g,\omega}\\
 		& \text{is diagonalizable and its eigenvalues are all non negative and }\\
 		&	\int_{\Omega\times\Omega}|g_\alpha(\omega_\beta)g_\beta(\omega_\alpha)|\, d(\mu\times\mu)(\alpha,\beta)<\infty\bigg\}.		
 	\end{align*}		
 \end{definition}
 \begin{question}
	\textbf{Classify measure spaces and (finite dimensional) Banach spaces so that continuous Grassmannian ASFs exist}.
\end{question}
\begin{definition}
 Let  $ (\{f_\alpha\}_{\alpha\in \Omega}, \{\tau_\alpha\}_{\alpha\in \Omega}) $ be a continuous ASF for $\mathcal{X}$ satisfying $f_\alpha(\tau_\alpha)=1$ for all $\alpha \in \Omega$. Assume that  the  frame operator 
 $	S_{f, \tau}$  is diagonalizable and its eigenvalues are all non negative. Continuous  ASF $ (\{f_\alpha\}_{\alpha\in \Omega}, \{\tau_\alpha\}_{\alpha\in \Omega}) $ is said to be 	\textbf{$\gamma$-equiangular} if there exists $\gamma\geq0$ such that
 	\begin{align*}
 		|f_\alpha(\tau_\beta)|=\gamma, \quad \forall  \alpha, \beta \in \Omega, \alpha\neq \beta.
 	\end{align*}
 \end{definition}

 \begin{conjecture}
 	(\textbf{Continuous Zauner's conjecture for Banach spaces}) \textbf{For a given measure space  $(\Omega, \mu)$ and for  every $d\in \mathbb{N}$, there exists a $\gamma$-equiangular  continuous ASF $(\{f_\alpha\}_{\alpha\in \Omega}, \{\tau_\alpha\}_{\alpha\in \Omega})$  for $\mathbb{C}^d$ such that $\mu(\Omega)=d^2$}.
 \end{conjecture}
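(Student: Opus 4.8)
The plan is to collapse this continuous statement onto a discrete one and then onto the classical Hilbert-space setting, so as to pinpoint the difficulty, and then to record the partial progress that is within reach (the statement being, after all, a conjecture). First I would take $\Omega=\{1,\dots,d^{2}\}$ equipped with the counting measure, so that $\mu(\Omega)=d^{2}$ and a continuous ASF over $\Omega$ is literally a discrete ASF $(\{f_j\}_{j=1}^{d^{2}},\{\tau_j\}_{j=1}^{d^{2}})$ for $\mathbb{C}^d$; under this choice the continuous Zauner conjecture for Banach spaces becomes its discrete counterpart. Next I would take the Euclidean norm on $\mathbb{C}^d$ and $f_j(h)=\langle h,\tau_j\rangle$, so that (by the corollary deriving Welch's theorem from Theorem \ref{WELCHBANACH}) the operator on $\mathrm{Sym}^m(\mathbb{C}^d)$ is positive and $\gamma$-equiangularity becomes $|\langle\tau_j,\tau_k\rangle|=\gamma$ for $j\neq k$ with $\|\tau_j\|=1$; a system of $d^{2}$ equiangular unit vectors in $\mathbb{C}^d$ is a maximal equiangular line system, hence a SIC-POVM, automatically tight with $\gamma^{2}=\tfrac{1}{d+1}$ by Theorem \ref{WELCHTHEOREM}. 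Thus the conjecture contains the existence of SIC-POVMs in every dimension, i.e. the classical, still open, Zauner conjecture.

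Given this, the realistic target is a partial result, obtained as follows. For every $d$ for which a Weyl--Heisenberg SIC is known, either exactly (infinitely many are) or numerically, I would transport the SIC vectors $\{\tau_j\}$ and the functionals $f_j=\langle\cdot,\tau_j\rangle$ verbatim into an arbitrarily renormed $\mathbb{C}^d$: the operator $x\mapsto\sum_{j}f_j^{\otimes m}(x)\tau_j^{\otimes m}$ on $\mathrm{Sym}^m(\mathbb{C}^d)$ is the same linear map whatever the norm, so it remains diagonalisable with nonnegative spectrum, the ASF property is untouched, and $\gamma$-equiangularity concerns only the scalars $f_j(\tau_k)$ and is therefore norm-independent. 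This settles the conjecture for such $d$ and for every norm, with the Euclidean-induced dual system. To reach genuinely non-Hilbertian functionals I would fix a non-Euclidean norm, express tightness of the frame operator together with $|f_j(\tau_k)|=\gamma$ as a polynomial system in the coordinates of the $\tau_j$ and $f_j$, exploit the Weyl--Heisenberg covariance of the known solutions to reduce the number of unknowns, and run an implicit-function or homotopy-continuation argument showing that the solution variety is nonempty and of positive dimension, hence survives a small change of norm.

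The main obstacle is structural and unavoidable: the statement specialises, as above, to Zauner's conjecture for Hilbert spaces, which no present technique resolves, so a complete proof is out of reach. Even setting that aside, for a fixed non-Euclidean norm the equiangularity-plus-tightness system is a genuine system of polynomial equations whose solvability is not delivered by any soft device; in particular the compactness-and-continuity argument used earlier in the paper for the existence of Grassmannian frames yields only a minimiser of the coherence, with no control whatsoever over equiangularity. The honest status of the statement is therefore that it is open, and at least as hard as the classical SIC existence problem.
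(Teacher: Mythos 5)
This statement is posed in the paper as an open conjecture; the paper supplies no proof of it, so there is no argument of the author's to compare yours against. Your assessment is the correct one: specialising to $\Omega=\{1,\dots,d^2\}$ with counting measure and to the Euclidean norm with $f_j=\langle\cdot,\tau_j\rangle$, the conjecture contains the existence of $d^2$ equiangular unit vectors in $\mathbb{C}^d$, i.e.\ the classical SIC/Zauner problem, so a complete proof is not currently available and declining to offer one is the honest outcome. Two caveats on your partial result. First, your claim that transporting a known SIC verbatim into an arbitrarily renormed $\mathbb{C}^d$ ``settles the conjecture for such $d$ and for every norm'' oversteps for the \emph{discrete} Zauner conjecture in this paper, which additionally demands $\|f_j\|=\|\tau_j\|=|f_j(\tau_j)|=1$ in the chosen norm and its dual; these conditions are not norm-independent, and rescaling to restore $\|\tau_j\|=\|f_j\|=1$ generically destroys $f_j(\tau_j)=1$. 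Second, the continuous conjecture as literally written is weaker than you assume: it asks only for \emph{some} $\gamma$-equiangular continuous ASF with $\mu(\Omega)=d^2$, without requiring tightness, unit norms, or $\gamma^2=\tfrac{1}{d+1}$, so your reduction to SIC existence relies on reading those intended hypotheses into the statement from the discrete analogue. Neither caveat changes the verdict that the statement is open.
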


 \begin{theorem}\label{CNG3}
 	Let  $ (\{f_\alpha\}_{\alpha\in \Omega}, \{\tau_\alpha\}_{\alpha\in \Omega}) $ be as in Definition \ref{CONTINUOUSGRASSMANNIANDEFINITION}. Then 
 	\begin{align}\label{CEQUIANGULARINEQUALITY}
 		\mathcal{M}(\{f_\alpha\}_{\alpha\in \Omega}, \{\tau_\alpha\}_{\alpha\in \Omega}) \geq \left(\frac{1}{(\mu\times\mu)((\Omega\times\Omega)\setminus\Delta)}\left[\frac{\mu(\Omega)^2}{d}-(\mu\times\mu)(\Delta)\right]\right)^\frac{1}{2}\eqqcolon\gamma.
 	\end{align}
 	If the continuous ASF is $\gamma$-equiangular, then we have equality in Inequality (\ref{CEQUIANGULARINEQUALITY}).
 \end{theorem}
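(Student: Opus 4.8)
The plan is to obtain Theorem \ref{CNG3} as a direct specialization of Theorem \ref{FIRSTORDERCONTINUOUS} (the first order continuous Welch bound for Banach spaces). First I would verify that a pair $(\{f_\alpha\}_{\alpha\in\Omega}, \{\tau_\alpha\}_{\alpha\in\Omega})$ as in Definition \ref{CONTINUOUSGRASSMANNIANDEFINITION} satisfies every hypothesis of Theorem \ref{FIRSTORDERCONTINUOUS}: a continuous Grassmannian frame is in particular a continuous ASF, hence a continuous Bessel family for $\mathcal{X}$; the diagonal $\Delta$ is measurable (a standing assumption of the section, and part of Definition \ref{CONTINUOUSGRASSMANNIANDEFINITION}); the integrability requirement $\int_{\Omega\times\Omega}|f_\alpha(\tau_\beta)f_\beta(\tau_\alpha)|\,d(\mu\times\mu)(\alpha,\beta)<\infty$ is explicitly imposed in Definition \ref{CONTINUOUSGRASSMANNIANDEFINITION}; the frame operator $S_{f,\tau}$ is assumed diagonalizable with non-negative eigenvalues; and $f_\alpha(\tau_\alpha)=1$ for all $\alpha\in\Omega$.

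Next I would invoke the ``in particular'' clause of Theorem \ref{FIRSTORDERCONTINUOUS}, which under exactly these hypotheses yields
\[
\sup_{\alpha,\beta\in\Omega,\,\alpha\neq\beta}|f_\alpha(\tau_\beta)|\geq\left(\frac{1}{(\mu\times\mu)((\Omega\times\Omega)\setminus\Delta)}\left[\frac{\mu(\Omega)^2}{d}-(\mu\times\mu)(\Delta)\right]\right)^{1/2}.
\]
Since by definition $\mathcal{M}(\{f_\alpha\}_{\alpha\in\Omega},\{\tau_\alpha\}_{\alpha\in\Omega})=\sup_{\alpha,\beta\in\Omega,\,\alpha\neq\beta}|f_\alpha(\tau_\beta)|$, this is precisely Inequality (\ref{CEQUIANGULARINEQUALITY}), namely $\mathcal{M}(\{f_\alpha\}_{\alpha\in\Omega},\{\tau_\alpha\}_{\alpha\in\Omega})\geq\gamma$.

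For the equality statement I would argue as follows: if the continuous ASF is $\gamma$-equiangular with $\gamma$ the right-hand side of (\ref{CEQUIANGULARINEQUALITY}), then $|f_\alpha(\tau_\beta)|=\gamma$ for all $\alpha\neq\beta$, so the supremum defining $\mathcal{M}(\{f_\alpha\}_{\alpha\in\Omega},\{\tau_\alpha\}_{\alpha\in\Omega})$ equals $\gamma$ exactly, giving equality in (\ref{CEQUIANGULARINEQUALITY}). I do not anticipate a real obstacle here, since the result is a direct corollary of Theorem \ref{FIRSTORDERCONTINUOUS}; the only points requiring a little care are bookkeeping ones: matching the constant $\gamma$ in the equiangularity hypothesis with the one appearing in the bound, and noting that $(\mu\times\mu)((\Omega\times\Omega)\setminus\Delta)>0$ so that the quotient is well defined and the supremum is taken over a nonempty index set.
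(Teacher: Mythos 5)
Your proposal is correct and matches the paper's (implicit) argument exactly: the paper states Theorem \ref{CNG3} without a separate proof, treating it as the direct specialization of the first order continuous Welch bound of Theorem \ref{FIRSTORDERCONTINUOUS} under the hypotheses packaged into Definition \ref{CONTINUOUSGRASSMANNIANDEFINITION}, precisely as you do. Your handling of the equality case (the supremum of the constant family $|f_\alpha(\tau_\beta)|=\gamma$ over the nonempty off-diagonal set equals $\gamma$) is the intended one-line observation.
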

 \begin{question}
 	\begin{enumerate}[\upshape (i)]
 		\item \textbf{	Whether the equality in Inequality (\ref{CEQUIANGULARINEQUALITY}) implies that the continuous ASF $ (\{f_\alpha\}_{\alpha\in \Omega}, \{\tau_\alpha\}_{\alpha\in \Omega})$  is $\gamma$-equiangular?}
 		\item \textbf{Whether the validity of  Inequality (\ref{CEQUIANGULARINEQUALITY}) implies   there is a relation between the measure of $\Omega$ and dimension of the space (like Theorem 2.3 in \cite{STROHMERHEATH})?}
 	\end{enumerate}
 \end{question}

 \bibliographystyle{plain}
 \bibliography{reference.bib}

\begin{thebibliography}{10}

\bibitem{ALIANTOINEGAZEAU}
S.~Twareque Ali, J.-P. Antoine, and J.-P. Gazeau.
\newblock Continuous frames in {H}ilbert space.
\newblock {\em Ann. Physics}, 222(1):1--37, 1993.

\bibitem{APPLEBYSYMM}
D.~M. Appleby.
\newblock Symmetric informationally complete-positive operator valued measures
  and the extended {C}lifford group.
\newblock {\em J. Math. Phys.}, 46(5):052107, 29, 2005.

\bibitem{APPLEBY}
Marcus Appleby, Ingemar Bengtsson, Steven Flammia, and Dardo Goyeneche.
\newblock Tight frames, {H}adamard matrices and {Z}auner's conjecture.
\newblock {\em J. Phys. A}, 52(29):295301, 26, 2019.

\bibitem{APPLEBYFLAMMIAMCCONNELLYARD}
Marcus Appleby, Steven Flammia, Gary McConnell, and Jon Yard.
\newblock S{IC}s and algebraic number theory.
\newblock {\em Found. Phys.}, 47(8):1042--1059, 2017.

\bibitem{APPLEBY123}
Marcus Appleby, Steven Flammia, Gary McConnell, and Jon Yard.
\newblock Generating ray class fields of real quadratic fields via complex
  equiangular lines.
\newblock {\em Acta Arith.}, 192(3):211--233, 2020.

\bibitem{BENEDETTONKOLESAR}
J.J. Benedetto and J.D Kolesar.
\newblock Geometric properties of grassmannian frames for $\mathbb{R}^2$ and
  $\mathbb{R}^3$.
\newblock {\em EURASIP J. Adv. Signal Process. 2006}, (049850), 2006.

\bibitem{BENEDETTOFICKUS}
John~J. Benedetto and Matthew Fickus.
\newblock Finite normalized tight frames.
\newblock {\em Adv. Comput. Math.}, 18(2-4):357--385, 2003.

\bibitem{BENGTSSON123}
Ingemar Bengtsson.
\newblock The number behind the simplest {SIC}-{POVM}.
\newblock {\em Found. Phys.}, 47(8):1031--1041, 2017.

\bibitem{BENGTSSON}
Ingemar Bengtsson.
\newblock S{IC}s: some explanations.
\newblock {\em Found. Phys.}, 50(12):1794--1808, 2020.

\bibitem{BENGTSSONZYCZKOWSKI}
Ingemar Bengtsson and Karol Zyczkowski.
\newblock On discrete structures in finite {H}ilbert spaces.
\newblock {\em arXiv:1701.07902v1 [quant-ph]}, 26 January 2017.

\bibitem{BOCCI}
Cristiano Bocci and Luca Chiantini.
\newblock {\em An introduction to algebraic statistics with tensors}, volume
  118 of {\em Unitext}.
\newblock Springer, Cham, 2019.

\bibitem{BODMANNHAASPOTENTIAL}
Bernhard~G. Bodmann and John Haas.
\newblock Frame potentials and the geometry of frames.
\newblock {\em J. Fourier Anal. Appl.}, 21(6):1344--1383, 2015.

\bibitem{BUKHCOX}
Boris Bukh and Christopher Cox.
\newblock Nearly orthogonal vectors and small antipodal spherical codes.
\newblock {\em Israel J. Math.}, 238(1):359--388, 2020.

\bibitem{CASAZZADILWORTH}
P.~G. Casazza, S.~J. Dilworth, E.~Odell, Th. Schlumprecht, and A.~Zs\'{a}k.
\newblock Coefficient quantization for frames in {B}anach spaces.
\newblock {\em J. Math. Anal. Appl.}, 348(1):66--86, 2008.

\bibitem{CASAZZAFICKUSOTHERS}
Peter~G. Casazza, Matthew Fickus, Jelena Kova\v{c}evi\'{c}, Manuel~T. Leon, and
  Janet~C. Tremain.
\newblock A physical interpretation of tight frames.
\newblock In {\em Harmonic analysis and applications}, Appl. Numer. Harmon.
  Anal., pages 51--76. Birkh\"{a}user Boston, Boston, MA, 2006.

\bibitem{CASAZZAHANLARSON}
Peter~G. Casazza, Deguang Han, and David~R. Larson.
\newblock Frames for {B}anach spaces.
\newblock In {\em The functional and harmonic analysis of wavelets and frames
  ({S}an {A}ntonio, {TX}, 1999)}, volume 247 of {\em Contemp. Math.}, pages
  149--182. Amer. Math. Soc., Providence, RI, 1999.

\bibitem{FREEMANKORNELSON}
J.~A. Ch\'{a}vez-Dom\'{\i}nguez, D.~Freeman, and K.~Kornelson.
\newblock Frame potential for finite-dimensional {B}anach spaces.
\newblock {\em Linear Algebra Appl.}, 578:1--26, 2019.

\bibitem{CHRISTENSENDATTAKIM}
Ole Christensen, Somantika Datta, and Rae~Young Kim.
\newblock Equiangular frames and generalizations of the {W}elch bound to dual
  pairs of frames.
\newblock {\em Linear Multilinear Algebra}, 68(12):2495--2505, 2020.

\bibitem{COMON}
Pierre Comon, Gene Golub, Lek-Heng Lim, and Bernard Mourrain.
\newblock Symmetric tensors and symmetric tensor rank.
\newblock {\em SIAM J. Matrix Anal. Appl.}, 30(3):1254--1279, 2008.

\bibitem{CONWAYHARDINSLOANE}
John~H. Conway, Ronald~H. Hardin, and Neil J.~A. Sloane.
\newblock Packing lines, planes, etc.: packings in {G}rassmannian spaces.
\newblock {\em Experiment. Math.}, 5(2):139--159, 1996.

\bibitem{CZAJA}
Wojciech Czaja.
\newblock Remarks on {N}aimark's duality.
\newblock {\em Proc. Amer. Math. Soc.}, 136(3):867--871, 2008.

\bibitem{DATTAHOWARD}
S.~Datta, S.~Howard, and D.~Cochran.
\newblock Geometry of the {W}elch bounds.
\newblock {\em Linear Algebra Appl.}, 437(10):2455--2470, 2012.

\bibitem{DATTAWELCHLMA}
Somantika Datta.
\newblock Welch bounds for cross correlation of subspaces and generalizations.
\newblock {\em Linear Multilinear Algebra}, 64(8):1484--1497, 2016.

\bibitem{DIESTELJARCHOWTONGE}
Joe Diestel, Hans Jarchow, and Andrew Tonge.
\newblock {\em Absolutely summing operators}, volume~43 of {\em Cambridge
  Studies in Advanced Mathematics}.
\newblock Cambridge University Press, Cambridge, 1995.

\bibitem{DINGFENG}
Cunsheng Ding and Tao Feng.
\newblock Codebooks from almost difference sets.
\newblock {\em Des. Codes Cryptogr.}, 46(1):113--126, 2008.

\bibitem{DRAVECKY}
Jozef Draveck\'{y}.
\newblock Spaces with measurable diagonal.
\newblock {\em Mat. \v{C}asopis Sloven. Akad. Vied}, 25(1):3--9, 1975.

\bibitem{EHLEROKOUDJOU}
M.~Ehler and K.~A. Okoudjou.
\newblock Minimization of the probabilistic {$p$}-frame potential.
\newblock {\em J. Statist. Plann. Inference}, 142(3):645--659, 2012.

\bibitem{EISNERFREEMAN}
Joseph Eisner and Daniel Freeman.
\newblock Continuous {S}chauder frames for {B}anach spaces.
\newblock {\em J. Fourier Anal. Appl.}, 26(4):Paper No. 66, 30, 2020.

\bibitem{FREEMANODELL}
D.~Freeman, E.~Odell, Th. Schlumprecht, and A.~Zs\'{a}k.
\newblock Unconditional structures of translates for {$L_p(\mathbb{R}^d)$}.
\newblock {\em Israel J. Math.}, 203(1):189--209, 2014.

\bibitem{FUCHSHOANGSTACEY}
Hoang~M.C. Fuchs, C.A. and B.C Stacey.
\newblock The {SIC} question: History and state of play.
\newblock {\em Axioms}, 6(3), 2017.

\bibitem{GOURKALEV}
Gilad Gour and Amir Kalev.
\newblock Construction of all general symmetric informationally complete
  measurements.
\newblock {\em J. Phys. A}, 47(33):335302, 14, 2014.

\bibitem{HAASHAMMENMIXON}
John~I. Haas, Nathaniel Hammen, and Dustin~G. Mixon.
\newblock The {L}evenstein bound for packings in projective spaces.
\newblock {\em Proceedings, volume 10394, Wavelets and Sparsity XVII, SPIE
  Optical Engineering+Applications, San Diego, California, United States of
  America}, 2017.

\bibitem{HAIKINZAMIRGAVISH}
Marina Haikin, Ram Zamir, and Matan Gavish.
\newblock Frame moments and {W}elch bound with erasures.
\newblock {\em 2018 IEEE International Symposium on Information Theory (ISIT)},
  pages 2057--2061, 2018.

\bibitem{HANLARSONMEMOIRS}
Deguang Han and David~R. Larson.
\newblock Frames, bases and group representations.
\newblock {\em Mem. Amer. Math. Soc.}, 147(697):x+94, 2000.

\bibitem{HORNHADAMARD}
Roger~A. Horn.
\newblock The {H}adamard product.
\newblock In {\em Matrix theory and applications ({P}hoenix, {AZ}, 1989)},
  volume~40 of {\em Proc. Sympos. Appl. Math.}, pages 87--169. Amer. Math.
  Soc., Providence, RI, 1990.

\bibitem{PAWELRUDNICKIZYCZKOWSKI}
Pawel Horodecki, Lukasz Rudnicki, and Karol Zyczkowski.
\newblock Five open problems in theory of quantum information.
\newblock {\em arXiv:2002.03233v2 [quant-ph]}, 21 December 2020.

\bibitem{JASPERKINGMIXON}
John Jasper, Emily~J. King, and Dustin~G. Mixon.
\newblock Game of {S}loanes: best known packings in complex projective space.
\newblock {\em Proc. SPIE 11138, Wavelets and Sparsity XVIII}, 2019.

\bibitem{JOHNSONSZANKOWSKI}
W.~B. Johnson and A.~Szankowski.
\newblock The trace formula in {B}anach spaces.
\newblock {\em Israel J. Math.}, 203(1):389--404, 2014.

\bibitem{KAISER}
Gerald Kaiser.
\newblock {\em A friendly guide to wavelets}.
\newblock Modern Birkh\"{a}user Classics. Birkh\"{a}user/Springer, New York,
  2011.

\bibitem{KOPPCON}
Gene~S. Kopp.
\newblock S{IC}-{POVM}s and the {S}tark {C}onjectures.
\newblock {\em Int. Math. Res. Not. IMRN}, (18):13812--13838, 2021.

\bibitem{CHEBIRA1}
J.~Kovacevic and A.~Chebira.
\newblock Life beyond bases: The advent of frames (part {I}).
\newblock {\em IEEE Signal Processing Magazine}, 24(4):86--104, 2007.

\bibitem{CHEBIRA2}
J.~Kovacevic and A.~Chebira.
\newblock Life beyond bases: The advent of frames (part {II}).
\newblock {\em IEEE Signal Processing Magazine}, 24(5):115--125, 2007.

\bibitem{MAHESHKRISHNA}
K.~Mahesh Krishna.
\newblock Continuous {W}elch bounds with applications.
\newblock {\em arXiv:2109.09296v1 [FA]}, 20 September 2021.

\bibitem{MAHESHTHESIS}
K.~Mahesh Krishna.
\newblock {\em \textbf{Metric, Schauder and operator-valued frames}}.
\newblock (PhD Thesis) National Institute of Technology Karnataka (NITK),
  Surathkal, India, 2021 (205 pages).

\bibitem{MAHESHKRISHNA2}
K.~Mahesh Krishna.
\newblock Modular {W}elch bounds with applications.
\newblock {\em arXiv:2201.00319v1 [OA] 2 January}, 2022.

\bibitem{KRISHNAJOHNSON}
K.~Mahesh Krishna and P.~Sam Johnson.
\newblock Towards characterizations of approximate {S}chauder frame and its
  duals for {B}anach spaces.
\newblock {\em J. Pseudo-Differ. Oper. Appl.}, 12(1):Paper No. 9, 13, 2021.

\bibitem{LILIHAN}
Fengjie Li, Pengtong Li, and Deguang Han.
\newblock Continuous framings for {B}anach spaces.
\newblock {\em J. Funct. Anal.}, 271(4):992--1021, 2016.

\bibitem{MAGSINO}
M.~Maxino and Dustin~G. Mixon.
\newblock Biangular {G}abor frames and {Z}auner's conjecture.
\newblock In {\em Wavelets and Sparsity XVIII}. 2019.

\bibitem{MUKKAVILLISABHAWALERKIPAAZHANG}
K.K. Mukkavilli, A.~Sabharwal, E.~Erkip, and B.~Aazhang.
\newblock On beamforming with finite rate feedback in multiple-antenna systems.
\newblock {\em IEEE Transactions on Information Theory}, 49(10):2562--2579,
  2003.

\bibitem{RAHIMIDARABYDARVISHI}
A.~Rahimi, B.~Daraby, and Z.~Darvishi.
\newblock Construction of continuous frames in {H}ilbert spaces.
\newblock {\em Azerb. J. Math.}, 7(1):49--58, 2017.

\bibitem{RANKIN}
R.~A. Rankin.
\newblock The closest packing of spherical caps in {$n$} dimensions.
\newblock {\em Proc. Glasgow Math. Assoc.}, 2:139--144, 1955.

\bibitem{RENESBLUMEKOHOUTSCOTTCAVES}
Joseph~M. Renes, Robin Blume-Kohout, A.~J. Scott, and Carlton~M. Caves.
\newblock Symmetric informationally complete quantum measurements.
\newblock {\em J. Math. Phys.}, 45(6):2171--2180, 2004.

\bibitem{YATES}
C.~Rose, S.~Ulukus, and R.~D. Yates.
\newblock Wireless systems and interference avoidance.
\newblock {\em EEE Transactions on Wireless Communications}, 1(3):415--428,
  2002.

\bibitem{ROSENFELD}
Moshe Rosenfeld.
\newblock In praise of the {G}ram matrix.
\newblock In {\em The mathematics of {P}aul {E}rd\H{o}s, {II}}, volume~14 of
  {\em Algorithms Combin.}, pages 318--323. Springer, Berlin, 1997.

\bibitem{RYAN}
Raymond~A. Ryan.
\newblock {\em Introduction to tensor products of {B}anach spaces}.
\newblock Springer Monographs in Mathematics. Springer-Verlag London, Ltd.,
  London, 2002.

\bibitem{SARWATE}
Dilip~V. Sarwate.
\newblock Bounds on crosscorrelation and autocorrelation of sequences.
\newblock {\em IEEE Trans. Inform. Theory}, 25(6):720--724, 1979.

\bibitem{SARWATEMEETING}
Dilip~V. Sarwate.
\newblock Meeting the {W}elch bound with equality.
\newblock In {\em Sequences and their applications ({S}ingapore, 1998)},
  Springer Ser. Discrete Math. Theor. Comput. Sci., pages 79--102. Springer,
  London, 1999.

\bibitem{SCOTTTIGHT}
A.~J. Scott.
\newblock Tight informationally complete quantum measurements.
\newblock {\em J. Phys. A}, 39(43):13507--13530, 2006.

\bibitem{SCOTTGRASSL}
A.~J. Scott and M.~Grassl.
\newblock Symmetric informationally complete positive-operator-valued measures:
  a new computer study.
\newblock {\em J. Math. Phys.}, 51(4):042203, 16, 2010.

\bibitem{SOLTANALIAN}
Mojtaba Soltanalian, Mohammad~Mahdi Naghsh, and Petre Stoica.
\newblock On meeting the peak correlation bounds.
\newblock {\em IEEE Transactions on Signal Processing}, 62(5):1210--1220, 2014.

\bibitem{STROHMERHEATH}
Thomas Strohmer and Robert~W. Heath, Jr.
\newblock Grassmannian frames with applications to coding and communication.
\newblock {\em Appl. Comput. Harmon. Anal.}, 14(3):257--275, 2003.

\bibitem{SUSTIKTROPP}
M\'{a}ty\'{a}s~A. Sustik, Joel~A. Tropp, Inderjit~S. Dhillon, and Robert~W.
  Heath, Jr.
\newblock On the existence of equiangular tight frames.
\newblock {\em Linear Algebra Appl.}, 426(2-3):619--635, 2007.

\bibitem{TALAGRAND}
Michel Talagrand.
\newblock Pettis integral and measure theory.
\newblock {\em Mem. Amer. Math. Soc.}, 51(307):ix+224, 1984.

\bibitem{TAN}
Yan~Shuo Tan.
\newblock Energy optimization for distributions on the sphere and improvement
  to the {W}elch bounds.
\newblock {\em Electron. Commun. Probab.}, 22:Paper No. 43, 12, 2017.

\bibitem{THOMAS}
S.~M. Thomas.
\newblock Approximate {S}chauder frames for {$\mathbb{R}^n$, Masters Thesis,
  St. Louis University, St. Louis, MO}.
\newblock 2012.

\bibitem{NICOLE}
Nicole Tomczak-Jaegermann.
\newblock {\em Banach-{M}azur distances and finite-dimensional operator
  ideals}, volume~38 of {\em Pitman Monographs and Surveys in Pure and Applied
  Mathematics}.
\newblock Longman Scientific \& Technical, Harlow; copublished in the United
  States with John Wiley \& Sons, Inc., New York, 1989.

\bibitem{TROPPDHILLON}
Joel~A. Tropp, Inderjit~S. Dhillon, Robert~W. Heath, Jr., and Thomas Strohmer.
\newblock Designing structured tight frames via an alternating projection
  method.
\newblock {\em IEEE Trans. Inform. Theory}, 51(1):188--209, 2005.

\bibitem{WALDRON2003}
Shayne Waldron.
\newblock Generalized {W}elch bound equality sequences are tight frames.
\newblock {\em IEEE Trans. Inform. Theory}, 49(9):2307--2309, 2003.

\bibitem{WALDRONSH}
Shayne Waldron.
\newblock A sharpening of the {W}elch bounds and the existence of real and
  complex spherical {$t$}-designs.
\newblock {\em IEEE Trans. Inform. Theory}, 63(11):6849--6857, 2017.

\bibitem{WELCH}
L.~Welch.
\newblock Lower bounds on the maximum cross correlation of signals.
\newblock {\em IEEE Transactions on Information Theory}, 20(3):397--399, 1974.

\bibitem{XIACORRECTION}
Pengfei Xia, Shengli Zhou, and Georgios~B. Giannakis.
\newblock Achieving the {W}elch bound with difference sets.
\newblock {\em IEEE Trans. Inform. Theory}, 51(5):1900--1907, 2005.

\bibitem{XIA}
Pengfei Xia, Shengli Zhou, and Georgios~B. Giannakis.
\newblock Correction to: ``{A}chieving the {W}elch bound with difference sets''
  [{IEEE} {T}rans. {I}nform. {T}heory {51} (2005), no. 5, 1900--1907].
\newblock {\em IEEE Trans. Inform. Theory}, 52(7):3359, 2006.

\bibitem{ZAUNER}
Gerhard Zauner.
\newblock Quantum designs: foundations of a noncommutative design theory.
\newblock {\em Int. J. Quantum Inf.}, 9(1):445--507, 2011.

\end{thebibliography}

\end{document}